%Article Head Version 01_08_2018
 \documentclass[a4paper,11pt]{article}
 \pagestyle{plain}
 \setlength{\oddsidemargin}{12pt}
 \setlength{\evensidemargin}{12pt}
 \setlength{\topmargin}{0pt}
 \setlength{\textwidth}{15cm}
 \setlength{\textheight}{21.5cm}
 \setlength{\parindent}{0.5cm}
 \setlength{\parskip}{1ex plus 0.2ex minus0.2ex}

%%%%%%%%%%%%%%%%%%%%%%%%%%%%%%%%%%%%%%%%%%For Syntex%%%%%%%%%%%%%%%%%%%%%%%%%%%%%%%%%%%%%%%%%%%%%%%%%%%%%%%%%%%%%%%%%%%
\synctex=1
%%%%%%%%%%%%%%%%%%%%%%%%%%%%%%%%%%%%%%%%%%%%%%%%%%%%%PACKAGE USED%%%%%%%%%%%%%%%%%%%%%%%%%%%%%%%%%%%%%%%%%%%%%%%%%%%%%%
\usepackage{pdfsync}
%%%%%%%%%%%%%%%%%%%%%%%%%%%%%%%%%%%%%%%%%%%%%%%%%%%%%%%%%%

%_______general packages_____________________________________________
 \usepackage[plainpages=false]{hyperref}
 \usepackage{amsfonts,amsmath,amssymb,amsthm}
 \usepackage{latexsym,lscape,rawfonts,mathrsfs}
 \usepackage{mathtools}
 \usepackage[utf8]{inputenc}
 \usepackage[overload]{empheq}
 \usepackage{cases}

%_______for colors___________________________________________________
 \usepackage[dvips]{color}
 \usepackage{multicol}

%_______for bibliography_____________________________________________
% \usepackage{natbib}

%________for graphics________________________________________________
 \usepackage[all]{xy}
 \usepackage{makeidx}
 \usepackage{graphicx,psfrag}
 \usepackage{pstool}
 \usepackage{float}

%________for tables__________________________________________________
 \usepackage{array,tabularx}

%________for space___________________________________________________
 \usepackage{setspace}

%________for appendix________________________________________________
 \usepackage[titletoc]{appendix}

%________for index________________________________________________
% \usepackage{makeidx}

%________for draft___________________________________________________
%\usepackage{draftcopy}
%\draftcopyName{Ricci Flow Draft}{100}

%________for special fonts___________________________________________________
\usepackage{txfonts}

%__________________environments______________________________________
\newtheorem{prop}{Proposition}[section]

\newtheorem{proposition}[prop]{Proposition}

\newtheorem{theorem}[prop]{Theorem}

\newtheorem{lemma}[prop]{Lemma}

\newtheorem{corollary}[prop]{Corollary}

\newtheorem{remark}[prop]{Remark}

\newtheorem{definition}[prop]{Definition}

%__________________Equation within section_____________________________________
\numberwithin{equation}{section}

\title{Rigidity and $\varepsilon$-regularity theorems of Ricci shrinkers}
 \author{Jie Wang and Youde Wang}
 \author{Jie Wang \; and\; Youde Wang\footnote{corresponding author}}
\date{}
\begin{document}	
	\maketitle
\begin{abstract}
 In this paper, we study the rigidity and $\varepsilon$-regularity theorems of Ricci shrinkers. First we prove the rigidity of the asymptotic volume ratio and local volume around a base point of a non-compact Ricci shrinker. Next we obtain some $\varepsilon$-regularity theorems of local entropy and curvature, which improve the previous corresponding results essentially and use them to study the structure of Ricci shrinkers at infinity. Especially, if the curvature of a non-compact Ricci shrinker satisfies some natural integral conditions, then it is asymptotic to a cone.
\end{abstract}
\thanks{Key words: asymptotic volume ratio; rigidity; $\varepsilon$-regularity theorems}\\
\thanks{MSC 2020: 53E20; 53C24; 53C21}
%————————————%目录———————————————%
\tableofcontents

\section{Introduction}
Let $(M^n,g)$ be a complete Riemannian n-manifold and $Rm,Ric,R$ be the Riemannian curvature, Ricci curvature and scalar curvature respectively. $(M^n,g)$ is called a gradient Ricci soliton if there exists a smooth function $f$ and scalar $\lambda$ such that
$$Ric+Hessf=\lambda g.$$
A Ricci soliton is called shrinking, steady or expanding,  if $\lambda>0,\lambda=0$ or $\lambda<0$ respectively. By scaling the metric $g$, we may assume $\lambda\in \left\lbrace \frac{1}{2},0,-\frac{1}{2}\right\rbrace$, and in these situations, we call the solitons are normalized. By the above soliton equation, Ricci solitons are natural generalizations of Einstein manifolds. As we know, Ricci solitons are crucial in the singularity analysis of Ricci flows. For example, by \cite{EMT}, the blowups around a type-\uppercase\expandafter{\romannumeral 1} singularity point always converge to nontrivial gradient Ricci shrinking solitons. 

In this paper, we focus on the case of shrinking. A Ricci shrinker is a triple $(M^n, g,f)$ of smooth manifold $M^n$, Riemannian metric $g$ and a smooth function $f$ satisfying
\begin{equation}\label{1.1}
	Ric+Hessf=\frac{1}{2}g.
\end{equation}
Tracing (\ref{1.1}) gives
\begin{equation}\label{1.2}
	R+\Delta f=\frac{n}{2}.
\end{equation} 
By a normalization of $f$, we may always assume
\begin{align}
	R+\left|\nabla f\right|^2&=f,\label{1.3}\\
	\int(4\pi)^{-\frac{n}{2}}e^{-f}dV&=e^{\boldsymbol{\mu}},\label{1.4}
\end{align}
where $\boldsymbol{\mu}\leq0$ is the functional of Perelman(cf.\cite{LW}). Due to \cite{LW}, if $(M^n, g,f)$ is non-flat, then there exists a small $\varepsilon=\varepsilon(n)>0$ such that $\boldsymbol{\mu}\leq -\varepsilon$. Also, it's well-known that $R>0$, or $R\equiv0$ and the shrinker is flat. For convenience, we also denote a Ricci shrinker by $(M^n, g,f,p)$, where $p$ is any minimum point of $f$(cf. Lemma \ref{l2.1}). Moreover, we say $(M^n,g,f,p)\in\mathcal{M}(\boldsymbol{\mu}_0)$ if $\boldsymbol{\mu}(M^n,g)\geq\boldsymbol{\mu}_0$ for some uniform constant $\boldsymbol{\mu}_0\leq0$.

In the following, we use $B_r(x)$ or $B(x,r)$, and $\left| B_r(x)\right|$ or  $\left| B(x,r)\right|$ to denote the geodesic ball centered at $x$ with radius $r$ and its volume, respectively. If the following limit exists, the asymptotic volume ratio $\textbf{AVR}$ of a complete non-compact Riemannian manifold $(M^n, g)$ is defined by
$$\textbf{A}=\textbf{AVR}(M^n,g)=\lim_{r\rightarrow+\infty}\frac{\left| B_r(x)\right| }{\omega_nr^n},$$
 where $\omega_n$ is the volume of the unit Euclidean n-ball. Moreover, it's easy to know $\textbf{AVR}(M^n, g)$ is independent of the choice of $x$. If $Ric\geq0$, by classical Bishop-Gromov volume comparison theorem, we have $\textbf{AVR}(M^n,g)\leq 1$. If $\textbf{AVR}(M^n,g)$ exists and $>0$, we say $(M^n,g)$ has Euclidean volume growth. Asymptotic volume ratio of a manifold is an important geometric quantity which may reflect the properties at infinity. For instance, Anderson's gap theorem \cite[Lemma 3.1]{A2} proved that if $Ric\equiv0$ and $\textbf{AVR}>1-\epsilon$ for some $\epsilon=\epsilon(n)$, then $(M^n,g)$ must be isometric to the standard Euclidean space $(\mathbb{R}^n,g_E)$. Non-compact Ricci shrinkers shares many similar properties with manifolds possessing $Ric\geq 0$, such as their volume grows at least linearly(cf. \cite[Theorem 6.1]{MW1}, see also \cite[Proposition 6]{LW}). By \cite{CLY2}, the \textbf{AVR} of any non-compact Ricci shrinker exists and $\leq C(n)$. Now we confirm that Anderson's gap theorem is valid on Ricci shrinkers.
\begin{theorem}\label{t1.1*}
	For any non-compact Ricci shrinker $(M^n,g,f,p)$, there holds
	\begin{equation*}
		\boldsymbol{A}\leq e^{\boldsymbol{\mu}}(\leq1).
	\end{equation*}
	The equality holds if and only if $(M^n,g,f,p)$ is the standard Gaussian soliton $\left( \mathbb{R}^n, dx^2, \frac{1}{4}\left|x\right|^2\right)$, otherwise $\boldsymbol{A}\leq1-\varepsilon$ for some small $\varepsilon=\varepsilon(n)>0$.
\end{theorem}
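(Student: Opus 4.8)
The plan is to connect the asymptotic volume ratio $\mathbf{A}$ with Perelman's entropy $\boldsymbol{\mu}$ through the monotonicity of a weighted volume quantity along the shrinker, and then run an Anderson-type rigidity argument in the equality case.

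First I would recall the standard potential-function estimates on a Ricci shrinker: with the normalization (\ref{1.3}), one has $\tfrac14 d(p,x)^2 - c_1 \le f(x) \le \tfrac14(d(p,x)+c_2)^2$ for constants depending only on $n$, and the heat-kernel-type identity $\int (4\pi)^{-n/2} e^{-f}\,dV = e^{\boldsymbol\mu}$. The key monotone quantity is the ``reduced volume'' or, more directly here, the function
\[
  V(r) = \int_{B_r(p)} e^{-f}\,dV \quad\text{together with}\quad r^{-n}\!\!\int_{\{f\le r^2/4\}} (\cdots),
\]
but the cleanest route is the one used by Munteanu–Wang / Chow–Lu–Yang: on $(M^n,g,f)$ with $\mathrm{Ric}\ge 0$ in an averaged sense coming from $\mathrm{Ric} = \tfrac12 g - \mathrm{Hess}\,f$, the weighted volume ratio is monotone, and its limit at infinity is exactly $\mathbf A$ (since $f$ grows like $\tfrac14 d^2$, the sublevel sets $\{f \le t\}$ are comparable to balls $B_{2\sqrt t}(p)$). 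Carrying the Gaussian weight $(4\pi)^{-n/2}e^{-f}$ through this computation and comparing with the Euclidean model — where $\int_{\mathbb R^n}(4\pi)^{-n/2}e^{-|x|^2/4}\,dx = 1$ and $\mathbf A = 1$ — yields the inequality $\mathbf A \le e^{\boldsymbol\mu}$. Concretely, I would integrate the soliton identity against cutoffs of the sublevel sets of $f$, use (\ref{1.2}) and $R\ge 0$ to control the error terms, and let the level go to infinity; the positivity of $R$ is what prevents equality unless $R\equiv 0$.

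For the rigidity statement, suppose $\mathbf A = e^{\boldsymbol\mu}$. Since $\boldsymbol\mu \le 0$ with equality iff the shrinker is flat (the Gaussian), and since $\mathbf A \le e^{\boldsymbol\mu} \le 1$, the borderline forces all the intermediate inequalities to be equalities: in particular the error terms built from $R$ must vanish, giving $R\equiv 0$, hence by the cited dichotomy ($R>0$ or $R\equiv0$ and flat) the shrinker is flat, and a flat complete shrinker with $\mathrm{Ric}+\mathrm{Hess}\,f = \tfrac12 g$ must be $(\mathbb R^n, dx^2, \tfrac14|x|^2)$ (the potential has $\mathrm{Hess}\,f = \tfrac12 g$, so $f$ is a quadratic and $M=\mathbb R^n$ by completeness and the minimum point). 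Conversely the Gaussian has $\mathbf A = 1 = e^{\boldsymbol\mu}$. Finally, for the quantitative gap $\mathbf A \le 1-\varepsilon(n)$ in the non-flat case, I would invoke the cited result of \cite{LW} that a non-flat shrinker has $\boldsymbol\mu \le -\varepsilon(n)$, so $\mathbf A \le e^{\boldsymbol\mu} \le e^{-\varepsilon(n)} \le 1-\varepsilon'(n)$.

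The main obstacle I anticipate is establishing the sharp inequality $\mathbf A \le e^{\boldsymbol\mu}$ with the correct constant: one must match the weighted monotone quantity's limit at infinity precisely to $\mathbf A$ (rather than to $\mathbf A$ times an uncontrolled constant), which requires careful asymptotics of $f$ and of the sublevel-set volumes, and at the other end one must identify the ``initial'' value of the monotone quantity with $e^{\boldsymbol\mu}$; keeping track of the Gaussian normalization $(4\pi)^{-n/2}$ through both limits is the delicate bookkeeping. The rigidity half, once equality in the monotonicity is known, should then follow routinely from $R\equiv0 \Rightarrow$ flat $\Rightarrow$ Gaussian.
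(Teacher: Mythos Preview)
Your proposal is correct and matches the paper's approach: the monotone quantity is the reduced-volume-type function $h(\tau)=\int_M(4\pi\tau)^{-n/2}e^{-f/\tau}\,dV$, for which $h(1)=e^{\boldsymbol\mu}$ by the normalization~(\ref{1.4}), $h'(\tau)=(4\pi)^{-n/2}\tau^{-n/2-2}(1-\tau)\int_M R\,e^{-f/\tau}\,dV\le0$ for $\tau\ge1$, and $\lim_{\tau\to\infty}h(\tau)=\mathbf A$ is established via the quadratic asymptotics of $f$ exactly as you anticipate. Equality forces $R\equiv0$ (hence Gaussian) and the gap follows from $\boldsymbol\mu\le-\varepsilon(n)$ just as you say; note that the sublevel-set monotone ratio you also mention does give $\mathbf A$ at infinity but does not read off $e^{\boldsymbol\mu}$ at any finite level, so the $\tau$-parametrized integral is the right object here.
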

On the basis of the above gap theorem at infinity, we prove the following local gap theorem which states that if the volume of some geodesic ball $B_r(p)$ is close to that of a standard Euclidean ball of the same radius, then the Ricci shrinker must be the standard Gaussian soliton.
\begin{theorem}\label{t1.2*}
	For any non-compact Ricci shrinker $(M^n,g,f,p)$, there exists an $\varepsilon=\varepsilon(n)>0$ such that if
	$$\omega_n^{-1}\varepsilon^n\left|B_{\frac{1}{\varepsilon}}(p)\right|\geq 1-\varepsilon,$$
	then it must be flat.
\end{theorem}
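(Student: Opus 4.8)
I would argue by contradiction and reduce to the rigidity statement of Theorem~\ref{t1.1*} through a compactness argument. Suppose the assertion fails; then there exist non-flat Ricci shrinkers $(M_i^n,g_i,f_i,p_i)$ and $\varepsilon_i\searrow 0$ with $\omega_n^{-1}\varepsilon_i^n|B_{1/\varepsilon_i}(p_i)|\ge 1-\varepsilon_i$. Since each $M_i$ is non-flat, $\boldsymbol{\mu}_i:=\boldsymbol{\mu}(M_i,g_i)\le-\varepsilon(n)<0$ by \cite{LW}, so it suffices to reach a contradiction by proving that $\boldsymbol{\mu}_i\to 0$. The first point is to propagate the hypothesis to all smaller radii: the soliton identity $Ric+\mathrm{Hess}\,f=\tfrac12 g$ gives a Bishop--Gromov type monotonicity on any Ricci shrinker, so $r\mapsto\omega_n^{-1}r^{-n}|B_r(p_i)|$ is non-increasing, whence
\[
(1-\varepsilon_i)\,\omega_n r^n\ \le\ |B_r(p_i)|\ \le\ \omega_n r^n,\qquad 0<r\le 1/\varepsilon_i .
\]
Next, from $R\ge 0$ and \eqref{1.3} one has $|\nabla\sqrt{f_i}|\le\tfrac12$, hence $f_i(x)\le\bigl(\sqrt{f_i(p_i)}+\tfrac12 d(p_i,x)\bigr)^2$, and by \eqref{1.2} at the minimum point $p_i$ one has $f_i(p_i)=R(p_i)\le\tfrac n2$; so $f_i\le\tfrac14\bigl(d(p_i,\cdot)+c(n)\bigr)^2$ for a dimensional constant $c(n)$. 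Combining this with the volume lower bound above and a co-area computation yields
\[
e^{\boldsymbol{\mu}_i}=(4\pi)^{-\frac n2}\!\!\int_{M_i}\!e^{-f_i}\,dV\ \ge\ (4\pi)^{-\frac n2}\!\!\int_{B_{1/\varepsilon_i}(p_i)}\!\!e^{-\frac14(d(p_i,\cdot)+c(n))^2}\,dV\ \ge\ (1-\varepsilon_i)\,c_0(n)
\]
for a dimensional constant $c_0(n)>0$; in particular $(M_i,g_i,f_i,p_i)\in\mathcal{M}(\boldsymbol{\mu}_0)$ for some dimensional $\boldsymbol{\mu}_0<0$, and the sequence is uniformly non-collapsed at $p_i$ with entropy bounded below.

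By the compactness theory for $\mathcal{M}(\boldsymbol{\mu}_0)$ (cf.\ \cite{LW}), together with the $\varepsilon$-regularity of curvature established in this paper and the uniform non-collapsing, a subsequence of $(M_i,g_i,f_i,p_i)$ converges in the pointed smooth (Cheeger--Gromov) topology near the base points, and in the pointed sense globally (smoothly away from a closed set of codimension at least $4$, with convergence of the volumes of geodesic balls centered at the base points), to a Ricci shrinker $(M_\infty,g_\infty,f_\infty,p_\infty)\in\mathcal{M}(\boldsymbol{\mu}_0)$ with $\boldsymbol{\mu}_i\to\boldsymbol{\mu}_\infty$ and $p_\infty$ a regular point. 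For each fixed $r>0$ and all large $i$, monotonicity and the hypothesis give $\omega_n^{-1}r^{-n}|B_r(p_i)|\ge\omega_n^{-1}\varepsilon_i^n|B_{1/\varepsilon_i}(p_i)|\ge 1-\varepsilon_i$, hence $\omega_n^{-1}r^{-n}|B_r(p_\infty)|\ge 1$; on the other hand this ratio is non-increasing and tends to $1$ as $r\to 0$ (since $p_\infty$ is regular), so $\omega_n^{-1}r^{-n}|B_r(p_\infty)|\le 1$. Thus $\omega_n^{-1}r^{-n}|B_r(p_\infty)|\equiv 1$, so $M_\infty$ is non-compact with $\boldsymbol{A}(M_\infty,g_\infty)=1$, and Theorem~\ref{t1.1*} forces $(M_\infty,g_\infty,f_\infty)$ to be the standard Gaussian soliton; in particular $\boldsymbol{\mu}_\infty=0$. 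This gives $\boldsymbol{\mu}_i\to 0$, contradicting $\boldsymbol{\mu}_i\le-\varepsilon(n)$, and completes the proof.

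The main difficulty lies in the machinery surrounding the limit: one needs a genuine (or sufficiently sharp almost-)monotonicity of the geodesic-ball volume ratio on Ricci shrinkers, enough regularity of $(M_\infty,g_\infty,f_\infty)$ near $p_\infty$ for the volume pinching to pass to the limit and for the equality case of the volume comparison to apply, and continuity of $\boldsymbol{\mu}$ along the convergence. If the geodesic-ball monotonicity is not available in the required form, one can run the argument with the sublevel sets $\{f_i<r\}$ instead --- these are comparable to $B_{2\sqrt r}(p_i)$ and the corresponding monotonicity for shrinkers is classical; alternatively, if one can show directly that $R(p_i)=\min f_i\to 0$ (for instance via the curvature $\varepsilon$-regularity applied at $p_i$), then $c(n)$ above may be replaced by a quantity tending to $0$, the co-area estimate becomes asymptotically sharp, $e^{\boldsymbol{\mu}_i}\to 1$ follows at once, and the compactness step is not needed.
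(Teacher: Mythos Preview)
Your argument hinges on the claim that the soliton equation $Ric+\mathrm{Hess}\,f=\tfrac12 g$ yields a Bishop--Gromov type monotonicity for the geodesic-ball volume ratio $r\mapsto \omega_n^{-1}r^{-n}|B_r(p)|$. This is not true: a Ricci shrinker need not have $Ric\ge 0$ (indeed $Ric=\tfrac12 g-\mathrm{Hess}\,f$ has no sign), so there is no Bishop--Gromov comparison for geodesic balls. Consequently both inequalities in your display $(1-\varepsilon_i)\omega_n r^n\le|B_r(p_i)|\le\omega_n r^n$ are unjustified, and the subsequent co-area lower bound for $e^{\boldsymbol{\mu}_i}$, the propagation of the volume pinching to all radii, and the upper bound $\omega_n^{-1}r^{-n}|B_r(p_\infty)|\le 1$ on the limit all fall through. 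Your fallback suggestion to work with $f$-sublevel sets is closer to the mark, but even there it is \emph{not} $V(r)/r^n$ that is monotone: what is monotone (for $r\ge\sqrt{2(n+2)}$) is $P(r)=V(r)/r^n-4X(r)/r^{n+2}$ with $X(r)=\int_{D(r)}R$, so one still has to control the scalar-curvature integral to pass from the single-ball hypothesis to a statement about $\boldsymbol A$.

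The paper's proof is not a compactness argument at all; it is direct and quantitative. First, the single volume hypothesis already gives $\boldsymbol{\mu}_i\ge -c_1(n)$ by the volume estimate of \cite[Lemma~2]{LW} (no monotonicity needed). With this entropy lower bound, Li--Wang's integral curvature estimate \cite[Corollary~6.24]{LW2} yields $\int_{B_r(p_i)}R_i^{2-\varepsilon}\le c_2 r^{\,n-2+2\varepsilon}$, hence by H\"older $X_i(r)\le c_3 r^{\,n-\theta}$ for some $\theta>0$. Feeding this into the monotonicity $P_i'(r)\le 0$ and integrating from $r=i$ to $\infty$ gives
\[
\boldsymbol A_i\omega_n\ \ge\ P_i(i)\;-\;C\,i^{-1/2}\ =\ \frac{V_i(i)}{i^n}-\frac{4X_i(i)}{i^{n+2}}-C\,i^{-1/2},
\]
which together with the hypothesis forces $\boldsymbol A_i\ge 1-\varepsilon(n)$ for $i$ large; Theorem~\ref{t1.1*} then gives the contradiction. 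In short, the missing idea in your proposal is precisely the use of the scalar-curvature integral estimate to turn the monotone quantity $P(r)$ into an effective lower bound on the asymptotic volume ratio.
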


In light of the soliton equations (\ref{1.2}) and (\ref{1.3}), the behaviors of scalar curvature affects the geometric properties of Ricci shrinkers significantly. It is no exaggeration to say, the study of scalar curvature run through the entire paper. Especially, we are devoted to discuss the integral of scalar curvature, and then study the geometric structure at infinity via it. First of all, we concentrate on the global integral of scalar curvature.
\begin{theorem}[\textbf{non-integrability}]\label{t1.3*}
	For any non-compact and non-flat Ricci shrinker $(M^n,g,f,p)$, we have
	\begin{equation*}
		\int_{M}R^k=+\infty\quad if \quad 1\leq k\leq \frac{n}{2},
	\end{equation*}
	and there exists a constant $C=C(n,\boldsymbol{\mu},k)>0$ such that
	\begin{equation*}
		\int_{M}R^k\geq C\quad if \quad k>\frac{n}{2}.
	\end{equation*}
	Furthermore, the critical index $k=\frac{n}{2}$ is sharp.
\end{theorem}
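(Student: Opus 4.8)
Write $r(x)=d(x,p)$. By the Cao--Zhou estimate $\tfrac14(r(x)-c)^2\le f(x)\le\tfrac14(r(x)+c)^2$, the sublevel sets $D(\rho):=\{f<\rho\}$ satisfy $B_{2\sqrt\rho-c}(p)\subseteq D(\rho)\subseteq B_{2\sqrt\rho+c}(p)$, exhaust $M$, and, using the linear lower volume bound together with $|B_s(p)|\le C(n)s^n$, obey $c_1\rho^{1/2}\le|D(\rho)|\le c_2\rho^{n/2}$. For $\rho$ large, $\{f=\rho\}$ is a regular level set with outward unit normal $\nabla f/|\nabla f|$, so integrating $R+\Delta f=\tfrac n2$ over $D(\rho)$ and applying the divergence theorem with the vector field $\nabla f$ gives the basic identity
\begin{equation*}
\int_{D(\rho)}R\,dV=\frac n2\,|D(\rho)|-\int_{\{f=\rho\}}|\nabla f|\,d\sigma .
\end{equation*}
Since $R>0$ for a non-flat shrinker, $\rho\mapsto\int_{D(\rho)}R\,dV$ is positive and increasing, and $\int_M R\,dV=\lim_{\rho\to\infty}\bigl(\tfrac n2|D(\rho)|-\int_{\{f=\rho\}}|\nabla f|\,d\sigma\bigr)\in(0,+\infty]$.

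The core of the argument is a scale-invariant lower bound for the $L^{n/2}$-energy of $R$ on dyadic shells: with $A_j:=D(2^{j+1})\setminus D(2^j)$ there exist $\varepsilon_0=\varepsilon_0(n)>0$ and $j_0=j_0(n,\boldsymbol\mu)$ such that $\int_{A_j}R^{n/2}\,dV\ge\varepsilon_0$ for all $j\ge j_0$. I would prove this by contradiction. If $\int_{A_{j_i}}R^{n/2}\,dV\to0$ along $j_i\to\infty$, rescale by $g_i=2^{-j_i}g$, which sends $A_{j_i}$ to a fixed annulus about $p$ and leaves $\int R^{n/2}\,dV$ invariant; the $\varepsilon$-regularity theorem for scalar curvature (proved later in this paper) then yields uniform curvature and injectivity-radius bounds on the rescaled shells, and since $\int_{D(\rho)}R^{n/2}\,dV$ stays small by monotonicity the limit is a smooth shrinker whose asymptotic volume ratio is forced up to its maximal value, so by the rigidity Theorem~\ref{t1.1*} (or Theorem~\ref{t1.2*}) $(M,g,f,p)$ must be the Gaussian soliton, contradicting non-flatness. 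Summing, $\int_M R^{n/2}\,dV\ge\sum_{j\ge j_0}\int_{A_j}R^{n/2}\,dV=+\infty$, which is the critical case $k=\tfrac n2$.

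For general $1\le k\le\tfrac n2$: if $R$ does not tend to $0$ at infinity, a Harnack inequality for $R$ together with the non-collapsing of shrinkers shows $R$ is bounded below by a positive constant on a set of infinite volume, so $\int_M R^k\,dV=+\infty$ directly; otherwise $R\to0$, and the $\varepsilon$-regularity (in the form: small local $L^{n/2}$-energy of $R$ controls curvature) gives $\sup_{A_j}R\le\Lambda(n)\,2^{-j}$ for $j\ge j_0$, whence, since $k-\tfrac n2\le0$,
\begin{equation*}
\int_{A_j}R^{k}\,dV\ \ge\ \bigl(\sup_{A_j}R\bigr)^{k-n/2}\int_{A_j}R^{n/2}\,dV\ \ge\ \varepsilon_0\,\Lambda^{\,k-n/2}\,2^{\,j(n/2-k)},
\end{equation*}
and summing over $j\ge j_0$ gives $\int_M R^{k}\,dV=+\infty$ (the series diverges for $k<\tfrac n2$, and has all terms $\ge\varepsilon_0$ for $k=\tfrac n2$). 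When $k>\tfrac n2$, $R^{k}$ is integrable and the claim is a definite lower bound on total curvature: if $\int_M R^{k}\,dV<\delta$ then $\int_{B_s(p)}R^{k}\,dV<\delta$ at every scale, which through the $\varepsilon$-regularity and Theorem~\ref{t1.2*} (small curvature integral $\Rightarrow$ almost-Euclidean volume of large balls $\Rightarrow$ flat) forces flatness; since a non-flat shrinker has $\boldsymbol\mu\le-\varepsilon(n)$, this determines $\delta=\delta(n,\boldsymbol\mu,k)>0$ and hence $C$.

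Sharpness of $k=\tfrac n2$ is exhibited by any non-flat, non-compact shrinker with $R(x)=O(r(x)^{-2})$ and Euclidean volume growth (such examples exist, e.g.\ among the Kähler shrinkers of Feldman--Ilmanen--Knopf), for which $\int_M R^{k}\,dV\asymp\int^{\infty}\rho^{\,n-1-2k}\,d\rho$ converges precisely when $k>\tfrac n2$, while $\int_M R^{n/2}\,dV=+\infty$ by the theorem itself. The principal obstacle is the shell estimate of the second paragraph: the elementary identity above degenerates exactly in the borderline regime of Euclidean volume growth with $R\to0$, where its two leading terms cancel just as they do over a cone, so the divergence of $\int_M R^{n/2}$ is a genuinely lower-order, $L^{n/2}$-critical phenomenon that appears to require the $\varepsilon$-regularity and rigidity machinery rather than the soliton identities alone.
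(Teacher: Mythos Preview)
Your strategy is quite different from the paper's and leans on machinery that is not available in the needed generality. The paper's argument is essentially elementary: its key inputs are the Chow--Lu--Yang pointwise lower bound $R\ge C^{-1}f^{-1}$ outside a compact set (Lemma~\ref{l2.3}), the Sobolev inequality on shrinkers (Lemma~\ref{l2.2}), and the AVR criterion of Lemma~\ref{l1.1}. For $k=1$, plugging a cut-off into the Sobolev inequality gives $\int_{B_{2r}}R\ge C|B_r|^{(n-2)/n}$ directly (Proposition~\ref{p3.1}). For $k=\tfrac n2$, one first shows that $\int_M R^k<\infty$ for any $k\ge1$ forces $\mathbf{AVR}>0$, and then combines $R\ge C/f$ with the co-area formula on dyadic $f$-shells to obtain $\int_{D(r)}R^{n/2}\ge C\ln r$ (Lemma~\ref{l3.4*}). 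The range $1<k<\tfrac n2$ follows by H\"older between the $k=1$ estimate and the volume, using $\tfrac{n-2}{n}>\tfrac{k-1}{k}$. The case $k>\tfrac n2$ again uses only $R\ge C/f$ on a fixed annulus plus a volume lower bound (Proposition~\ref{p3.6}). No $\varepsilon$-regularity, Harnack inequality, or blowup is invoked anywhere.

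The genuine gap in your proposal is the $\varepsilon$-regularity you rely on: the statement ``small local $L^{n/2}$-energy of $R$ controls curvature'' is proved in this paper only for $n=4$ (Theorem~\ref{t1.6*}), and even there with the side conditions $f(x)\ge\varepsilon^{-1}r^{-2}$ and $r\le1$; in general dimension the paper has only the analogue for $\int|Rm|^{n/2}$ (Theorem~\ref{t1.9*}). Thus your shell-by-contradiction argument for $k=\tfrac n2$, and your reduction of the subcritical range to it via ``$\sup_{A_j}R\le\Lambda\,2^{-j}$'', have no foundation when $n\neq4$. A second issue is the blowup step itself: the rescalings $g_i=2^{-j_i}g$ are not normalized shrinkers, and the claim that ``the limit is a smooth shrinker whose asymptotic volume ratio is forced up to its maximal value'' is not justified---smallness of $\int R^{n/2}$ on one shell neither controls the global $\mathbf{AVR}$ nor supplies the compactness required to take a limit. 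The ingredient that makes all of this unnecessary, and that your outline never uses, is precisely the pointwise bound $R\ge C/f$.
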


\begin{theorem}[\textbf{Gap Theorem}]\label{t1.4*}
	For any compact or non-compact Ricci shrinker $(M^n,g,f,p)\in\mathcal{M}(\boldsymbol{\mu}_0)$ and $k\geq 1$, there exists a constant $\delta=\delta(n,\boldsymbol{\mu}_0,k)>0$ such that if
	\begin{equation*}
		\int_{M}R^k\leq \delta,
	\end{equation*}
	then it must be flat.
\end{theorem}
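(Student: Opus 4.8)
The plan is to argue by contradiction and compactness. If the statement failed for some fixed $n$, $\boldsymbol{\mu}_0$, $k$, there would be a sequence of Ricci shrinkers $(M_i^n,g_i,f_i,p_i)\in\mathcal{M}(\boldsymbol{\mu}_0)$ — compact or not — each of them non-flat, with $\int_{M_i}R_i^k\,dV_i\to 0$. Since each $(M_i,g_i)$ is non-flat, \cite{LW} provides a dimensional gap $\boldsymbol{\mu}(M_i,g_i)\le-\varepsilon_0$ with $\varepsilon_0=\varepsilon_0(n)>0$, so that $\boldsymbol{\mu}_0\le\boldsymbol{\mu}(M_i,g_i)\le-\varepsilon_0<0$. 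The entropy lower bound makes the $(M_i,g_i)$ uniformly $\kappa(n,\boldsymbol{\mu}_0)$-noncollapsed at scales $\le 1$, and, $p_i$ being a minimum point of $f_i$, the Cao--Zhou type estimates (cf.\ \cite{LW,MW1}) give, with constants depending only on $n$, a uniform quadratic control $\tfrac14(d(x,p_i)-c_1)_+^2\le f_i(x)\le\tfrac14(d(x,p_i)+c_2)^2$ and an at most polynomial volume growth $|B_r(p_i)|\le c_3r^n$. The goal is to contradict $\boldsymbol{\mu}(M_i,g_i)\le-\varepsilon_0$ by showing that, along a subsequence, $(M_i,g_i,f_i,p_i)$ converges to the Gaussian soliton, whose entropy is $0$.

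The key step is to turn the vanishing of the \emph{global} integral into a uniform curvature bound. Let $r_0=r_0(n)>0$ and $\varepsilon_1=\varepsilon_1(n,\boldsymbol{\mu}_0)>0$ be the scale and threshold of the $\varepsilon$-regularity theorem for curvature established earlier in the paper, in the form: for a shrinker in $\mathcal{M}(\boldsymbol{\mu}_0)$, if $\int_{B_{r_0}(x)}R^k<\varepsilon_1$ then $|Rm|\le C(n,\boldsymbol{\mu}_0)\,r_0^{-2}$ on $B_{r_0/2}(x)$. Since $R_i\ge 0$ one has $\int_{B_{r_0}(x)}R_i^k\le\int_{M_i}R_i^k\to 0$ for \emph{every} $x\in M_i$, so for all large $i$ the hypothesis holds at every point and therefore $\sup_{M_i}|Rm_i|\le K$ with $K=K(n,\boldsymbol{\mu}_0)$. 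Combined with the noncollapsing this bounds the injectivity radius of $(M_i,g_i)$ from below, so by Cheeger--Gromov compactness (with the local elliptic regularity supplied by the soliton equations) a subsequence converges in the pointed $C^\infty$ topology to a complete manifold $(M_\infty,g_\infty,p_\infty)$ with $|Rm_\infty|\le K$. Elliptic estimates for $\Delta f_i=\tfrac n2-R_i$ together with $|\nabla f_i|^2=f_i-R_i$ give uniform local bounds on $f_i$, so $f_i\to f_\infty$ and $(M_\infty,g_\infty,f_\infty)$ is again a Ricci shrinker normalized as in (\ref{1.3})--(\ref{1.4}). On each fixed ball the $R_i$ are uniformly Lipschitz — from $\nabla R_i=2\,Ric_i(\nabla f_i,\cdot)$ and the bounds above — and have $\int R_i^k\to 0$, hence $R_i\to 0$ uniformly there; so $R_\infty\equiv 0$, whence by the dichotomy recalled in the introduction $(M_\infty,g_\infty)$ is flat, and the unique complete flat Ricci shrinker in this normalization is the Gaussian soliton $\left(\mathbb{R}^n,dx^2,\tfrac14|x|^2\right)$, with $\boldsymbol{\mu}=0$.

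It remains to see that the convergence detects the entropy, i.e.\ that $e^{\boldsymbol{\mu}(M_i,g_i)}=\int(4\pi)^{-\frac n2}e^{-f_i}\,dV_i\to\int(4\pi)^{-\frac n2}e^{-f_\infty}\,dV_\infty=e^{\boldsymbol{\mu}(M_\infty,g_\infty)}=1$. This is dominated convergence: $f_i(x)\ge\tfrac14(d(x,p_i)-c_1)_+^2$ and $|B_r(p_i)|\le c_3r^n$ furnish a uniform integrable majorant, while on compact sets $f_i\to f_\infty$ and $dV_i\to dV_\infty$. Hence $\boldsymbol{\mu}(M_i,g_i)\to 0$, contradicting $\boldsymbol{\mu}(M_i,g_i)\le-\varepsilon_0$, and the theorem follows with some $\delta=\delta(n,\boldsymbol{\mu}_0,k)>0$. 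One can also treat the two cases directly: when $M_i$ is non-compact, Theorem \ref{t1.3*} already yields $\int_{M_i}R_i^k=+\infty$ for $1\le k\le\tfrac n2$ and $\int_{M_i}R_i^k\ge C(n,\boldsymbol{\mu}(M_i),k)$ for $k>\tfrac n2$, the latter bounded away from $0$ uniformly because $\boldsymbol{\mu}(M_i)$ lies in the compact interval $[\boldsymbol{\mu}_0,-\varepsilon_0]$; when $M_i$ is compact, integrating the traced equation (\ref{1.2}) over $M_i$ gives $\int_{M_i}R_i\,dV_i=\tfrac n2|M_i|$, so by Jensen's inequality and $|M_i|\ge(4\pi)^{\frac n2}e^{\boldsymbol{\mu}(M_i)}\ge(4\pi)^{\frac n2}e^{\boldsymbol{\mu}_0}$ — valid since $f_i\ge\min f_i=R_i(p_i)\ge 0$ — we get $\int_{M_i}R_i^k\ge\left(\tfrac n2\right)^k(4\pi)^{\frac n2}e^{\boldsymbol{\mu}_0}$.

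The main obstacle is the curvature-estimate step: one must match the hypothesis of the $\varepsilon$-regularity theorem for curvature to what the vanishing of the global integral $\int_{M_i}R_i^k$ actually supplies on \emph{every} $r_0$-ball, so that one $(n,\boldsymbol{\mu}_0)$-threshold bounds the curvature everywhere on $M_i$; this is what forces the Cheeger--Gromov limit to be a smooth manifold rather than an orbifold with singularities, as one would otherwise have to allow (Haslhofer--M\"uller). Passing to the limit in the entropy integral is more routine but still relies on the uniform Cao--Zhou quadratic lower bound and the polynomial volume growth, without which mass could escape to infinity and the limiting entropy could drop.
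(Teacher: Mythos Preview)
Your ``direct'' treatment in the penultimate paragraph is essentially the paper's proof: split into the non-compact case (where Theorem~\ref{t1.3*} applies) and the compact case (where one integrates \eqref{1.2} and uses a volume lower bound together with H\"older/Jensen). The paper obtains the compact volume bound via the diameter estimate of \cite{FS} and the ball-volume estimate of \cite[Lemma~2.4]{LW2}; your bound $|M|\ge(4\pi)^{n/2}e^{\boldsymbol{\mu}_0}$, read off directly from the normalization \eqref{1.4} and $f\ge 0$, is a cleaner alternative. One point to be careful about in the non-compact subcase $k>n/2$: the constant in Theorem~\ref{t1.3*} is $C(n,\boldsymbol{\mu},k)$, not $C(n,\boldsymbol{\mu}_0,k)$, so to bound it away from zero on the class $\mathcal{M}(\boldsymbol{\mu}_0)$ you need that the constant in Proposition~\ref{p3.6} can be taken uniform once $\boldsymbol{\mu}\in[\boldsymbol{\mu}_0,-\varepsilon_0]$; this is implicit in the paper's own proof as well.

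Your main compactness argument, however, has a genuine gap precisely where you flag it. The $\varepsilon$-regularity you invoke --- ``$\int_{B_{r_0}(x)}R^k<\varepsilon_1$ implies $|Rm|\le C r_0^{-2}$'' for general $n$ and $k\ge 1$ --- is \emph{not} established in the paper. The available results are Theorem~\ref{t1.6*} (only $n=4$, $k=2$, and crucially requiring $f(x)\ge\varepsilon^{-1}r^{-2}$, so it says nothing near the base point) and Theorem~\ref{t1.9*} (only for $\int|Rm|^{n/2}$, not for $\int R^k$). In higher dimensions small scalar curvature does not by itself control the full curvature tensor, and the paper's $4$-dimensional argument for Theorem~\ref{t1.6*} hinges on splitting off an $\mathbb{R}$-factor from the rescaled potential and then using that $3$-dimensional Ricci-flat manifolds are flat --- a step with no analogue for $n\ge 5$. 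So the Cheeger--Gromov limit could a priori be a Ricci shrinker orbifold (as in \cite{HM}) rather than a smooth manifold, and the entropy contradiction does not follow. The direct route via Theorem~\ref{t1.3*} avoids all of this, which is why the paper takes it.
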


Next, we turn to the local properties of Ricci shrinkers. By B. Wang \cite{Wb,WB2}, we know Perelman's local entropies are essential tools for analyzing the local geometry of manifolds, and later, Li-Wang \cite{LW} studied the local properties of Ricci shrinkers such as no-local-collapsing and pseudo-locality by virtue of the theory established in \cite{Wb,WB2}. Especially, the $\boldsymbol{\nu}$ entropy is related closely to isoperimetric inequality, volume ratio and so on, for more information, we refer to \cite{Wb,WB2}. Utilizing the two-sided pseudo-locality theorem of Li-Wang's recent work \cite{LW2} and blowup arguments, we prove an $\varepsilon$-regularity theorem of the local $\boldsymbol{\nu}$ entropy. 
\begin{theorem}\label{t1.5*}
	For any $x\in (M^n,g,f,p)$ and $r>0$, there exists a constant $\varepsilon=\varepsilon(n)>0$ such that if 
	\begin{equation*}
		\boldsymbol{\nu}\left(B\left(x,r\right) ,r^2\right)\geq -\varepsilon,
	\end{equation*}
	then
	\begin{equation*}
		\left|Rm\right|(x)\leq \varepsilon^{-1}r^{-2}.
	\end{equation*}
\end{theorem}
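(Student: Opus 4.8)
The plan is to argue by contradiction via a blowup (point-picking) argument, which is the standard route for $\varepsilon$-regularity statements of this type. Suppose the conclusion fails; then for every $i$ there is a Ricci shrinker $(M_i^n,g_i,f_i,p_i)$, a point $x_i$, and a radius $r_i>0$ with $\boldsymbol{\nu}(B(x_i,r_i),r_i^2)\geq -1/i$ but $|Rm_i|(x_i)>i\,r_i^{-2}$. After parabolically rescaling each shrinker so that $r_i=1$ (using that the $\boldsymbol{\nu}$ entropy of a ball is scale invariant in the stated form, while $|Rm|$ scales like $r^{-2}$), we may assume $r_i=1$, $\boldsymbol{\nu}(B(x_i,1),1)\geq -1/i\to 0$, and $|Rm_i|(x_i)>i\to\infty$. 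Note that a ball entropy bounded below by a quantity tending to $0$ is a strong almost-Euclidean hypothesis; in particular it gives uniform no-local-collapsing and volume-ratio lower bounds on $B(x_i,1/2)$ in the spirit of Li--Wang \cite{LW} and B.~Wang \cite{Wb,WB2}.

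The key steps, in order, are as follows. First, I would invoke Li--Wang's two-sided pseudo-locality theorem \cite{LW2}: the lower bound $\boldsymbol{\nu}(B(x_i,1),1)\geq-1/i$ forces, for $i$ large, a uniform curvature bound $|Rm_i|\leq C$ on a slightly smaller ball $B(x_i,1/2)$, together with injectivity radius lower bounds there. This already contradicts $|Rm_i|(x_i)\to\infty$ and closes the argument. The reason a naive application does not immediately work is that pseudo-locality is usually phrased with a smallness threshold $\varepsilon_0(n)$ rather than "$\geq-1/i$", so the honest execution is: fix the $\varepsilon_0$ coming from \cite{LW2}; for $i$ large enough that $1/i<\varepsilon_0$, the hypothesis $\boldsymbol{\nu}(B(x_i,1),1)\geq-1/i>-\varepsilon_0$ triggers the pseudo-locality conclusion and bounds $|Rm_i|(x_i)$ by a dimensional constant, contradicting $|Rm_i|(x_i)>i$ once $i$ exceeds that constant. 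If instead one wants a self-contained blowup: apply a point-selection lemma to find $y_i$ near $x_i$ with $Q_i:=|Rm_i|(y_i)\to\infty$ and $|Rm_i|\leq 4Q_i$ on $B(y_i,Q_i^{-1/2})$; rescale by $Q_i$; the rescaled metrics have $|Rm|\leq 4$ locally, $|Rm|(y_i)=1$, and, by the entropy lower bound passing through the rescaling as a no-collapsing statement, uniform volume lower bounds; so by Cheeger--Gromov--Hamilton compactness a subsequence converges in $C^\infty_{loc}$ to a complete limit $(M_\infty,g_\infty,y_\infty)$ with $|Rm_\infty|(y_\infty)=1$. On the other hand, the limit inherits $\boldsymbol{\nu}\geq 0$ on every ball (entropy is upper semicontinuous under such convergence, or one uses monotonicity of $\boldsymbol{\nu}$ in the radius together with the hypothesis), which by the rigidity case of Perelman's $\boldsymbol{\nu}$-entropy / Theorem~\ref{t1.2*}-type rigidity forces $(M_\infty,g_\infty)$ to be flat $\mathbb{R}^n$, contradicting $|Rm_\infty|(y_\infty)=1$.

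I expect the main obstacle to be the passage of the entropy lower bound through the blowup limit — i.e., verifying that $\boldsymbol{\nu}(B(\cdot,\rho),\rho^2)\geq 0$ survives in the limit for all $\rho$, since the balls in question have radius tending to $0$ after rescaling by $Q_i$, and one must use scale-invariance of the ball $\boldsymbol{\nu}$ together with its monotonicity in $\rho$ to transfer information from the unit scale down to small scales and then back up in the limit. A secondary technical point is ensuring the local no-collapsing needed for Cheeger--Gromov compactness is genuinely uniform; this is exactly where the results of \cite{LW} on local no-local-collapsing for Ricci shrinkers, driven by a lower bound on the local $\boldsymbol{\nu}$ entropy, are used. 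Given these inputs, the cleanest write-up is the pseudo-locality route: quote \cite{LW2}, fix its dimensional threshold, and derive the contradiction directly, relegating the blowup/compactness details to the proof of the pseudo-locality statement itself.
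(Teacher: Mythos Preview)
Your overall contradiction-and-blowup framework is correct and is what the paper does, but both of your suggested executions have a real gap.

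The ``direct pseudo-locality'' shortcut is not available. The entropy-input pseudo-locality results in \cite{LW} only yield a bound of the form $|Rm|(x)\leq C(n)\bigl(d(x,p)+\sqrt{2n}\bigr)$ (this is exactly the content of the Remark the paper places after its proof), i.e.\ a bound depending on the distance to the base point, not the scale-invariant bound $\varepsilon^{-1}r^{-2}$. Removing that dependence is precisely the point of the theorem, so quoting pseudo-locality to conclude is circular. The two-sided pseudo-locality theorem of \cite{LW2} that the paper does invoke has \emph{curvature-plus-volume} hypotheses, not an entropy hypothesis, so it cannot be applied to the raw assumption $\boldsymbol{\nu}(B(x_i,1),1)\geq -1/i$.

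In the blowup route, the paper uses the two-sided pseudo-locality at a different stage than you anticipate, and this step is not optional. After a two-stage point-picking producing points $y_k$ with $|Rm|\leq 4$ on balls of radius $L_k\to\infty$ in the rescaled metric, and after getting volume and injectivity-radius lower bounds from the entropy via \cite{Wb} and \cite{CGT}, the paper does \emph{not} pass directly to a Riemannian limit. It instead feeds the curvature-and-volume bounds into \cite[Theorem~1.6]{LW2} to obtain a \emph{spacetime} curvature bound for the induced Ricci shrinker flow on a uniform time interval; only then do Shi's local estimates and Hamilton's compactness give smooth convergence of the parabolically rescaled flows. Without this step you would have only $C^{1,\alpha}$ Cheeger--Gromov convergence, and the passage of $|Rm|(y_k)=1$ to the limit (as well as of the local $\boldsymbol{\nu}$) would be delicate; note there is no usable elliptic system for $Rm$ on the blown-up shrinkers because the rescaled potential and its gradient need not stay bounded. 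The final rigidity step---the limit has $\boldsymbol{\nu}\geq 0$ on every ball, hence is Euclidean by \cite[Proposition~3.2]{WB2}, contradicting $|Rm|(y_\infty)>0$---is as you describe.
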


As we know, due to Anderson's classical work \cite{A2}, Einstein manifolds have nice regularities with respect to the integral of curvature. Later, Huang \cite{HS} and Ge-Jiang \cite{GJ} extended Anderson's theory to 4 dimensional Ricci shrinkers along different ways respectively. However, both of their methods depended on the Sobolev inequalities, which are quite rough on Ricci shrinkers for the lack of curvature bounds. Very recently, Li-Wang \cite{LW3} made a breakthrough and proved that any 4 dimensional k\"{a}hler Ricci shrinker has bounded curvature.
Now by the same arguments as to get Theorem \ref{t1.5*} and the weak-compactness of Ricci shrinkers \cite[Theorem 3.4]{LW3}, we improve the previous results of $\varepsilon$-regularity essentially. Furthermore, there are several important applications of our $\varepsilon$-regularity theorem, such as controlling the local $\boldsymbol{\nu}$ entropy, see section \ref{sec4} for details. 
\begin{theorem}\label{t1.6*}
	For any $x\in(M^4,g,f,p)$ and $r\in\left(0,1\right]$, there exists a  constant $\varepsilon=\varepsilon(\boldsymbol{\mu})>0$ such that if
\begin{equation*}
	\int_{B\left(x,2r\right)}R^{2}dV\leq\varepsilon,\quad f(x)\geq \varepsilon^{-1}r^{-2},
\end{equation*}
then
\begin{equation*}
	\left|Rm\right|(x)\leq \varepsilon^{-1}r^{-2}.
\end{equation*}
\end{theorem}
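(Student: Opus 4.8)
**Proof plan for Theorem \ref{t1.6*}.**

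The plan is to argue by contradiction using a blowup/weak-compactness argument, exactly in the spirit of the proof of Theorem \ref{t1.5*}. Suppose the statement fails. Then for every $j$ there is a $4$-dimensional Ricci shrinker $(M_j^4,g_j,f_j,p_j)$ with $\boldsymbol{\mu}(M_j)\geq\boldsymbol{\mu}$, a point $x_j\in M_j$ and a radius $r_j\in(0,1]$ such that
\begin{equation*}
	\int_{B(x_j,2r_j)}R^2\,dV_j\leq \tfrac1j,\qquad f_j(x_j)\geq j\,r_j^{-2},\qquad \text{but}\qquad |Rm|(x_j)>j\,r_j^{-2}.
\end{equation*}
The first step is to rescale: set $\tilde g_j=r_j^{-2}g_j$ (keeping the same $f_j$, or rather the naturally rescaled potential). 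Under this scaling the $L^2$-norm of scalar curvature on a ball of radius $2$ is scale-invariant in dimension $4$, so $\int_{\tilde B(x_j,2)}R_{\tilde g_j}^2\,d\tilde V_j\leq 1/j\to0$, while $f_j(x_j)\to\infty$ and $|Rm|_{\tilde g_j}(x_j)>j\to\infty$. Thus after rescaling we may assume $r_j=1$ throughout.

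Next I would invoke the weak-compactness theory for Ricci shrinkers, \cite[Theorem 3.4]{LW3}: since $\boldsymbol{\mu}(M_j)\geq\boldsymbol{\mu}$ is uniformly bounded below, a subsequence of $(M_j^4,g_j,x_j)$ converges in a suitable (pointed Cheeger--Gromov / $\hat C^\infty$ away from a finite singular set, with the singularities being orbifold points) sense to a limit. The hypothesis $f_j(x_j)\to\infty$ together with \eqref{1.3} and the distance-to-$p_j$ estimates for $f_j$ (Lemma \ref{l2.1}-type bounds, $f_j\sim \frac14 d(\cdot,p_j)^2$) forces $x_j$ to run off to infinity in $M_j$; combined with the scalar curvature decay one expects the pointed limit to be a nonnegatively curved, scalar-flat limit space — and here the key input is the condition $\int_{B(x_j,2)}R^2\to 0$, which passes to the limit to give $R\equiv 0$ on the limit ball. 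On a Ricci shrinker region this should force the limit to be Ricci-flat there, hence (in dimension $4$, using the orbifold structure and the scalar-flat + Ricci-flat conclusion) a flat ball, so $|Rm|\equiv0$ near the limit of $x_j$. The final step is to upgrade the convergence near $x_j$ to $C^2$ (or better): because the limit ball is flat, there is no curvature concentration there, so $x_j$ cannot be a singular point in the limit and the convergence is smooth in a fixed-size neighborhood of $x_j$; therefore $|Rm|_{g_j}(x_j)\to 0$, contradicting $|Rm|(x_j)>j\to\infty$.

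The main obstacle is the convergence step and in particular ensuring that the blowup limit is genuinely flat near the basepoint and that $x_j$ is not a singular point of the limit. Two issues need care: first, one must show that the $L^2$ smallness of $R$ on a ball of \emph{fixed} radius $2$ is preserved under the convergence and genuinely rules out curvature concentrating at $x_j$ — this is where one needs the shrinker structure (via \eqref{1.2}, \eqref{1.3}) to convert ``$R$ small in $L^2$'' plus ``$f$ large'' into an effective control on $Rm$, presumably through the $\varepsilon$-regularity package of \cite{LW3} or an $L^2$ curvature bound of the type $\int_{B(x_j,2)}|Rm|^2$ bounded; second, one must make sure the rescaling does not destroy the lower bound on $\boldsymbol{\mu}$, which it does not since $\boldsymbol{\mu}$ is scale-invariant, but the normalization \eqref{1.3}--\eqref{1.4} of $f$ must be tracked carefully through the rescaling. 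Once the limit is identified as a flat space near $x_j$ with smooth convergence, the contradiction is immediate.
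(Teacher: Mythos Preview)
Your overall contradiction/compactness strategy matches the paper's, but there is a genuine gap at the step ``Ricci-flat $\Rightarrow$ flat'' in dimension $4$. This implication is simply false: there are plenty of non-flat Ricci-flat $4$-manifolds (Eguchi--Hanson, more general ALE gravitational instantons, etc.), and these are exactly the kind of spaces that arise as blowup limits in $4$-dimensional $\varepsilon$-regularity arguments. So from $R_\infty\equiv 0$ you only get $Ric_\infty\equiv 0$ (via the evolution equation for $R$ along the limit flow, as you and the paper both note), and something more is needed to reach flatness. The paper supplies this extra ingredient by exploiting the potential function: it first proves a preliminary estimate (Lemma~\ref{l5.8}) that $|Rm|/f$ is small on $B(x,r)$ under the hypotheses, which in particular guarantees $R<f/2$ and hence $|\nabla f|^2=f-R>0$ at the point-picked basepoint $y_k$. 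After the blowup rescaling, the normalized functions $\check f_k=\tilde f_k/|\nabla_{\tilde g_k}\tilde f_k|(y_k)$ have $\check f_k(y_k)=0$, $|\nabla\check f_k|(y_k)=1$, and $|\mathrm{Hess}\,\check f_k|\to 0$, so they converge to a nontrivial linear function on the limit. This forces the Ricci-flat limit to split as $\mathbb{R}\times N^3$, and a $3$-dimensional Ricci-flat manifold is flat. That splitting step is the heart of the argument and is absent from your plan.

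There is a second, related issue. You propose to apply \cite[Theorem~3.4]{LW3} directly at $x_j$ and then argue that ``because the limit ball is flat, there is no curvature concentration there, so $x_j$ cannot be a singular point.'' This is circular: the weak-compactness limit of \cite{LW3} is an \emph{orbifold}, and a flat orbifold such as $\mathbb{R}^4/\Gamma$ is perfectly compatible with $|Rm|_{g_j}(x_j)\to\infty$ and $x_j$ converging to the cone point. The paper avoids this by not using \cite[Theorem~3.4]{LW3} for the main blowup at all. Instead it runs the same point-picking/pseudo-locality machinery as in Theorem~\ref{t4.1}: pick $y_k$ maximizing $|Rm|\,d^2(\cdot,\partial\Omega_k)$, use Lemma~\ref{l5.5} (volume lower bound from small $\int R^2$) plus the two-sided pseudo-locality of \cite{LW2} to get uniform curvature and injectivity-radius bounds on balls of radius $\to\infty$, and then take a \emph{smooth} Cheeger--Gromov limit of rescaled Ricci flows. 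The orbifold issue never arises, and the contradiction comes from $|Rm|_{\tilde g_k}(y_k)$ being a fixed positive constant while the limit is flat. The weak-compactness theorem of \cite{LW3} is used only in the auxiliary Lemma~\ref{l5.8}, precisely to feed the splitting argument above.
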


For any non-compact $(M^4,g,f,p)$, by Theorem $\ref{t1.3*}$, we know $R^2$ is non-integrable on the whole manifold. However, we also point out that if $q>2$, then $R^q$ may be integrable. Now by the $\varepsilon$-regularity of Theorem \ref{t1.6*}, we can go a step further and prove that if $R^q$ is integrable on $(M^4,g,f,p)$, then the Ricci shrinker is $C^k$ is asymptotic to a cone for all $k$.
\begin{theorem}\label{t1.8*}
	For any Ricci shrinker $(M^4,g,f,p)$, if for some $q>2$,
	\begin{equation*}
		\int_{M^4}R^qdV<+\infty,
	\end{equation*}
	then it is $C^k$ asymptotic to a cone for all $k$.
\end{theorem}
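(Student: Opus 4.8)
The strategy is to reduce the statement to a quadratic curvature decay estimate at infinity and then to run a blow-down argument producing a smooth asymptotic cone.

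\emph{Curvature decay.} The main step is to prove that there is a constant $C$ with $|Rm|(x)\le C\,d(p,x)^{-2}$ for $d(p,x)$ large. Combining the Cao--Zhou estimate $\tfrac14(d(p,x)-c_1)^2\le f(x)\le\tfrac14(d(p,x)+c_2)^2$ with the volume bound $|B_r(p)|\le C(n)r^n$ for shrinkers, Hölder's inequality gives
\[
\int_{B(x,2)}R^2\,dV\le\Big(\int_{B(x,2)}R^q\,dV\Big)^{2/q}|B(x,2)|^{1-2/q},
\]
whose first factor tends to $0$ as $d(p,x)\to\infty$ since $\int_M R^q\,dV<\infty$; hence, \emph{if} a uniform local volume bound were available, Theorem~\ref{t1.6*} at unit scale would already yield a uniform curvature bound at infinity. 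As no such bound is known a priori on shrinkers, I would argue by contradiction, mimicking the blow-up scheme in the proofs of Theorems~\ref{t1.5*}--\ref{t1.6*}. If the decay failed, a point-selection against the weight $f$ produces base points $x_i$ with $d(p,x_i)\to\infty$ at which $\lambda_i^2:=|Rm|(x_i)$ is nearly maximal over rescaled balls whose radii tend to $\infty$; by the weak compactness of Ricci shrinkers \cite[Theorem~3.4]{LW3} and the no-local-collapsing of \cite{LW}, the pointed manifolds $(M,\lambda_i^2 g,x_i)$ subconverge to a limit $(M_\infty,g_\infty,x_\infty)$, smooth off a set of codimension $\ge 4$, with $|Rm|_{g_\infty}(x_\infty)=1$. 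The decisive point is the scaling of the hypothesis: since $q>\tfrac n2=2$ makes $n-2q<0$, for every fixed $\rho$
\[
\int_{B_{\lambda_i^2 g}(x_i,\rho)}R_{\lambda_i^2 g}^{\,q}\,dV_{\lambda_i^2 g}=\lambda_i^{\,n-2q}\!\int_{B_g(x_i,\rho/\lambda_i)}\!R^{q}\,dV\le\lambda_i^{\,n-2q}\!\!\int_{M\setminus B(p,\,d(p,x_i)-1)}\!\!R^{q}\,dV\longrightarrow 0,
\]
so $R_{g_\infty}\equiv 0$. Passing $Ric=\tfrac12 g-Hess f$ to the limit --- the right-hand side converges as a tensor even where $\nabla f$ degenerates, in which case an $\mathbb R$-factor splits off --- the limit is a gradient soliton with identically vanishing scalar curvature, hence flat by the dichotomy ``$R>0$ or flat'' for shrinkers together with the flatness of Ricci-flat $3$-manifolds; this contradicts $|Rm|_{g_\infty}(x_\infty)=1$.

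\emph{From decay to the cone.} Once $|Rm|(x)\le C\,d(p,x)^{-2}$ holds, the rescaled balls $\big(B(x,\tfrac14 d(p,x)),\,d(p,x)^{-2}g\big)$ have uniformly bounded geometry, so Shi's local estimates give $|\nabla^j Rm|(x)\le C_j\,d(p,x)^{-2-j}$ for all $j$; in particular $Ric\ge -C\,d(p,x)^{-2}$, so Bishop--Gromov comparison together with \cite{CLY2} shows $|B_r(p)|/(\omega_n r^n)$ decreases to $\boldsymbol A=\textbf{AVR}(M,g)$, which must be positive (else a blow-down would be collapsed). The blow-downs $(M,\lambda^{-2}g,p)$, $\lambda\to\infty$, then subconverge in $C^\infty_{\mathrm{loc}}$ away from the vertex to a metric cone $C(\Sigma)=\big((0,\infty)\times\Sigma,\,dr^2+r^2 g_\Sigma\big)$; its singular set has codimension $\ge 4$ and, being a cone, reduces to the vertex, so the link $(\Sigma^3,g_\Sigma)$ is smooth. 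To see that $C(\Sigma)$ does not depend on the sequence $\lambda_i$, so that the asymptotic cone is unique, I would invoke the uniqueness of tangent cones at infinity of shrinkers with Euclidean volume growth, or alternatively a \L ojasiewicz--Simon estimate for Perelman's entropy along the blow-down family. Finally, the uniform bounds $|\nabla^j Rm|\le C_j\,d(p,\cdot)^{-2-j}$ permit the usual construction, for each $k$, of a diffeomorphism $\Phi$ from an end of $C(\Sigma)$ onto a neighborhood of infinity in $M$ with $r^{j}\,|\nabla^j(\Phi^*g-g_{C(\Sigma)})|\to 0$ as $r\to\infty$ for every $j\le k$; since $k$ is arbitrary, $(M^4,g)$ is $C^k$ asymptotic to a cone for all $k$.

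\emph{Main obstacle.} The heart of the matter is the curvature-decay step, and the delicate point there is to exclude that the blow-up limit is a non-flat Ricci-flat ALE space: the $L^q$-bound on $R$ forces only $R_{g_\infty}\equiv 0$, not flatness, so an Eguchi--Hanson-type bubble is not ruled out by scaling alone. The $\mathbb R$-splitting device settles the case in which the potential's gradient blows up in the rescaling --- which is precisely when the curvature blow-up rate is dominated by $d(p,x_i)^2\sim f(x_i)$ --- while the residual case of strictly faster curvature blow-up must be excluded using four-dimensionality together with the structural output of \cite{LW3}, namely the codimension-$4$ character of the singular set and the volume control it affords, which is also what compensates, on the blown-up pieces, for the missing uniform local volume upper bound for shrinkers. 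Arranging one point-selection that simultaneously captures all these degenerate regimes is the remaining bookkeeping difficulty.
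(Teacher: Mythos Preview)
The paper's proof is much shorter and more modular than your proposal, and your version has a genuine gap together with an unnecessary detour.

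\textbf{The volume upper bound exists.} You write that ``no such bound is known a priori on shrinkers,'' but the non-expanding volume estimate \cite[Theorem~1.2]{LW2} gives $|B(x,r)|\le C(n,\boldsymbol\mu)\,r^n$ for any ball in a Ricci shrinker. This makes your entire blow-up detour unnecessary: the paper combines H\"older with this volume bound to get $\int_{B(x,10)}R^2\le\varepsilon$ for $d(p,x)$ large, and then applies the $\varepsilon$-regularity Theorem~\ref{t1.6*} at unit scale directly (noting $f(x)\ge\varepsilon^{-1}$ is automatic far out), obtaining $|Rm|\le K$ uniformly outside a compact set.

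\textbf{No need for quadratic decay or a hand-built cone.} The paper does not prove $|Rm|\le C\,d^{-2}$ and does not blow down to construct the cone. Once $|Rm|\le K$ holds, the Harnack inequality (Corollary~\ref{c5.9}) together with the lower volume bound of Lemma~\ref{l5.5} shows $R(x)\to 0$ at infinity; then Lemma~\ref{l5.2} gives $|Ric|^2\le CR\to 0$, and Munteanu--Wang's Theorem~\ref{t5.1} delivers the $C^k$-asymptotically conical structure outright. By contrast, your blow-down scheme still needs $\boldsymbol A>0$ (your reason ``else collapsed'' is vague; the correct argument here is Lemma~\ref{l3.4*}, which shows $\int_M R^q<\infty$ for $q\ge 1$ forces $\boldsymbol A>0$) and, more seriously, uniqueness of the tangent cone, for which you invoke either a general uniqueness principle or a \L ojasiewicz--Simon inequality---neither is free.

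\textbf{The ALE-bubble exclusion is a real gap in your scheme.} You correctly flag that $R_{g_\infty}\equiv 0$ does not by itself rule out an Eguchi--Hanson-type limit, and you leave the ``strictly faster curvature blow-up'' regime to unspecified ``structural output of \cite{LW3}'' and ``bookkeeping.'' In the paper this issue lives inside the proof of Theorem~\ref{t1.6*} and is resolved by Lemma~\ref{l5.8}, which shows $\sup_{B(x,1)}|Rm|/f\to 0$ under the small-$\int R^2$ hypothesis; this guarantees $|\nabla f|\ne 0$ after rescaling, so the blow-up limit \emph{always} splits off an $\mathbb R$-factor and the three-dimensional Ricci-flat cross-section is flat. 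Without an analogue of Lemma~\ref{l5.8} your splitting device need not fire, and nothing in your outline excludes a non-flat Ricci-flat ALE bubble.

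In short: your strategy is in the right spirit---you are essentially re-deriving Theorem~\ref{t1.6*} inside this argument---but you miss the available volume upper bound, take a longer route to the cone than the black-box appeal to Munteanu--Wang, and leave the decisive bubble-exclusion step incomplete.
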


For general dimension $n\geq 4$, under stronger condition, we still have
\begin{theorem}\label{t1.9*}
	For any $x\in(M^n,g,f,p)$ and $r>0$, there exists a constant $\varepsilon=\varepsilon\left(n, \boldsymbol{\mu}\right) >0$ such that if
	\begin{equation*}
		\int_{B\left(x,r\right)}\left|Rm\right|^{\frac{n}{2}}dV\leq\varepsilon,
	\end{equation*}
	then
	\begin{equation*}
		\left|Rm\right|(x)\leq \varepsilon^{-1}r^{-2}.
	\end{equation*}
Moreover, if for some $q>\frac{n}{2}$,
\begin{equation*}
	\int_{M^n}\left|Rm\right|^qdV<+\infty,
\end{equation*}
then it is $C^k$ asymptotic to a cone for all $k$.
\end{theorem}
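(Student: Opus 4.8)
**Proof proposal for Theorem \ref{t1.9*}.**

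The plan is to prove the two assertions in turn, following the same strategy as for Theorems \ref{t1.6*} and \ref{t1.8*}. For the $\varepsilon$-regularity statement I would argue by contradiction and blow-up. Suppose the conclusion fails: there is a sequence of Ricci shrinkers $(M_i^n,g_i,f_i,p_i)\in\mathcal{M}(\boldsymbol{\mu})$, points $x_i$ and radii $r_i$ with $\int_{B(x_i,r_i)}|Rm_{g_i}|^{n/2}\,dV_{g_i}\to 0$ but $|Rm_{g_i}|(x_i)\,r_i^2>i$. By a standard point-selection (maximal-function / Claudio Arezzo–type) argument one finds new centres $y_i$ and scales $s_i$ with $|Rm|(y_i)s_i^2\to\infty$ and $|Rm|\le 2|Rm|(y_i)$ on a ball of radius $\sim\sqrt{|Rm|(y_i)}\,s_i$; rescaling so that $|Rm|(y_i)=1$ and invoking the weak-compactness theory of Ricci shrinkers (\cite[Theorem 3.4]{LW3}, used exactly as in the proof of Theorem \ref{t1.6*}), one extracts a limit which is either a non-flat Ricci shrinker or a non-flat complete non-compact Ricci-flat ALE (or more generally non-collapsed) manifold. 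In either case the scale-invariant quantity $\int|Rm|^{n/2}$ on the limit is strictly positive by $\varepsilon$-regularity/gap results (e.g. Anderson's gap theorem for the Ricci-flat case, and the non-triviality $\boldsymbol{\mu}\le-\varepsilon(n)$ for the shrinker case), contradicting the vanishing of the $L^{n/2}$ curvature energy in the limit, which is guaranteed because $n/2$ is the scale-invariant exponent and the energy on the rescaled balls tends to $0$.

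For the "asymptotic to a cone" conclusion I would combine this $\varepsilon$-regularity with the known asymptotic structure of Ricci shrinkers at infinity. If $\int_{M^n}|Rm|^q<\infty$ for some $q>n/2$, then along any sequence $d(p,z_j)\to\infty$ the $L^{n/2}$ curvature mass of unit-size balls (measured in the scale $d(p,z_j)^{-1}$) tends to zero: indeed, rescaling by $\lambda_j=d(p,z_j)^{-2}$ and using Hölder together with the linear-at-most volume growth and the fact that $q>n/2$ forces the rescaled $L^{n/2}$-energy on bounded annuli to vanish. Hence Theorem \ref{t1.9*}'s first part applies at all large scales and yields uniform curvature bounds $|Rm|(z)\le C\,d(p,z)^{-2}$ outside a compact set, and then, by differentiating the soliton equation and Shi-type estimates adapted to shrinkers, bounds on all covariant derivatives $|\nabla^k Rm|(z)\le C_k\,d(p,z)^{-2-k}$. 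With such decay, the standard machinery (as in the Ricci-flat ALE theory and its shrinker analogue, cf. the argument for Theorem \ref{t1.8*}) shows the rescaled flows $(M,\lambda_j g, z)$ subconverge smoothly to a metric cone, and that the cone is unique; upgrading convergence from $C^0$ to $C^k$ for every $k$ uses precisely the derivative decay just obtained together with the gradient estimates $|\nabla f|\sim\tfrac12 d(p,\cdot)$ and $f\sim\tfrac14 d(p,\cdot)^2$ from Lemma \ref{l2.1} and \eqref{1.3}.

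The main obstacle, I expect, is the blow-up limit analysis: one must make sure the weak-compactness theorem applies in our non-collapsed setting (the entropy lower bound $\mathcal{M}(\boldsymbol{\mu})$ and Li–Wang's no-local-collapsing supply this) and that the limit, whether a shrinker or a Ricci-flat manifold arising from "parabolic" versus "elliptic" rescalings, is genuinely non-flat — this is where the point-selection lemma and the gap theorems are essential, and where the dependence of $\varepsilon$ on $\boldsymbol{\mu}$ (and not merely on $n$) enters. A secondary technical point is the passage from an $L^{n/2}$ smallness hypothesis on a single ball to curvature decay at every scale at infinity: this requires the dyadic-annuli Hölder estimate to beat the volume growth uniformly, which works only because $q$ is strictly larger than $n/2$, and the borderline case $q=n/2$ (consistent with Theorem \ref{t1.3*}) must be excluded.
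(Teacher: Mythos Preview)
Your blow-up argument for the $\varepsilon$-regularity part is essentially the paper's: point-picking, rescaling so that $|Rm|(y_k)=1$, extracting a smooth limit, and deriving a contradiction because the scale-invariant quantity $\int|Rm|^{n/2}$ vanishes on the limit while the normalized curvature does not. The paper phrases this by saying the proof of Theorem~\ref{t5.4} goes through verbatim but is simpler, since the hypothesis $\int_{B(x,r)}|Rm|^{n/2}\le\varepsilon$ directly forces the limit to be flat and no $f(x)$-largeness is needed. (A minor inaccuracy: the compactness used in Theorem~\ref{t5.4} is not \cite[Theorem~3.4]{LW3} but rather the parabolic route via the two-sided pseudo-locality of \cite[Theorem~1.6]{LW2} together with Lemma~\ref{l5.5} for the volume lower bound; \cite[Theorem~3.4]{LW3} appears only inside Lemma~\ref{l5.8}. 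This does not affect the logic of your outline.)

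The second part, however, has a genuine gap. Your claim that $\varepsilon$-regularity at scale $r\sim d(p,z)$ yields $|Rm|(z)\le C\,d(p,z)^{-2}$ requires $\int_{B(z,\,c\,d(p,z))}|Rm|^{n/2}\le\varepsilon$ for large $z$, and your H\"older estimate does not give this: the tail $\int_{M\setminus B(p,s)}|Rm|^q$ goes to zero but with no quantitative rate, while the volume factor $|B(z,\,c\,d(p,z))|^{1-n/(2q)}\le C\,d(p,z)^{\,n-n^2/(2q)}$ blows up. Your appeal to ``linear-at-most volume growth'' is incorrect---Ricci shrinkers have at-most-Euclidean ($r^n$) growth by \cite{CZ,CLY2}---and even linear growth would not save the inequality since $1-n/(2q)>0$. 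What you do obtain from H\"older at the \emph{fixed} scale $r=1$ is that $\int_{B(z,1)}|Rm|^{n/2}\to 0$, hence by the first part $|Rm|$ is uniformly bounded outside a compact set; but that is only boundedness, not quadratic decay.

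The paper proceeds exactly from this weaker conclusion, following the template of Theorem~\ref{t5.7}: once $|Rm|\le K$, a Harnack-type estimate (the $n$-dimensional analogue of Corollary~\ref{c5.9}) gives $R(y)\le C\,R(x)$ on unit balls, so averaging and the uniform volume lower bound of Lemma~\ref{l5.5} force $R(x)\to 0$ pointwise at infinity. Lemma~\ref{l5.2} then converts $R\to 0$ with bounded $|Rm|$ into $|Ric|\to 0$, and the asymptotic cone structure is obtained by quoting Munteanu--Wang's Theorem~\ref{t5.1} as a black box, rather than by building the cone by hand from quadratic curvature decay and Shi estimates. Your route would be valid \emph{if} you could independently establish the quadratic decay (e.g.\ via Kotschwar--Wang \cite{KW} or the arguments behind Theorem~\ref{t5.1} itself), but as written the step from $L^q$-finiteness to $|Rm|\le C\,d^{-2}$ is missing.
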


\section{Asymptotic volume ratio}
First we recall 
\begin{lemma}[\cite{CLY2}]\label{l1.1}
	For any Ricci shrinker $(M^n,g,f,p)$, $\textbf{AVR}(M^n,g)$ always exists and is bounded by a constant depending only on n, i.e. $\textbf{AVR}(M^n,g)\leq C(n)$. Moreover, $\textbf{AVR}(M^n,g)>0$ if and only if $\int_{n+2}^{+\infty}\frac{\overline{X}(c)}{c\overline{V}(c)}dc<+\infty$ where $$\overline{V}(c)=\left|\overline{D}(c)\right|=\left|\left\lbrace x\in M: f<c\right\rbrace \right|  \quad and \quad \overline{X}(c)=\int_{\overline{D}(c)}RdV.$$
\end{lemma}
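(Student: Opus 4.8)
The plan is to move everything from geodesic balls onto the sublevel sets $\overline{D}(c)=\{f<c\}$, where the soliton equations can be used directly. First I would invoke the Cao--Zhou estimates: there are constants $c_1,c_2>0$ with $B(p,2\sqrt c-c_1)\subseteq\overline{D}(c)\subseteq B(p,2\sqrt c+c_2)$ for all large $c$, the Laplacian comparison $\Delta(2\sqrt f)\le(n-1)/(2\sqrt f)$ outside a compact set, and its Bishop--Gromov consequence $\overline{V}(c):=|\overline{D}(c)|\le C(n)c^{n/2}$. Since $|d(x,p)-2\sqrt{f(x)}|$ stays bounded, the two containments squeeze $|B(p,r)|/(\omega_n r^n)$ between quantities asymptotic to $2^{-n}\omega_n^{-1}\,\overline{V}(c)/c^{n/2}$; so it suffices to prove that $\overline{V}(c)/c^{n/2}$ has a finite limit, to bound it, and to decide when that limit is positive. (Center-independence of $\textbf{AVR}$ is standard.)

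The analytic heart is a single differential identity. For a.e.\ regular value $c$, the co-area formula gives $\overline{V}'(c)=\int_{\{f=c\}}|\nabla f|^{-1}$ and $\overline{X}'(c)=\int_{\{f=c\}}R\,|\nabla f|^{-1}$, while the divergence theorem combined with $\Delta f=\tfrac n2-R$ gives $\int_{\{f=c\}}|\nabla f|=\tfrac n2\overline{V}(c)-\overline{X}(c)$; in particular $\overline{X}(c)\le\tfrac n2\overline{V}(c)$. Using $|\nabla f|^2=c-R$ on $\{f=c\}$ to rewrite $c\,|\nabla f|^{-1}=|\nabla f|+R\,|\nabla f|^{-1}$, I obtain
\[
c\,\overline{V}'(c)=\tfrac n2\,\overline{V}(c)-\overline{X}(c)+\overline{X}'(c)\qquad\text{for a.e. }c.
\]
Setting $\Phi(c):=c^{-n/2}\overline{V}(c)-c^{-n/2-1}\overline{X}(c)$, a one-line computation then gives $\Phi'(c)=(\tfrac n2+1-c)\,c^{-n/2-2}\overline{X}(c)\le0$ for $c>\tfrac n2+1$, while $\Phi(c)=c^{-n/2}(\overline{V}(c)-c^{-1}\overline{X}(c))\ge0$ because $\overline{X}(c)\le c\overline{V}(c)$. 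Hence $\Phi$ is eventually non-increasing and non-negative, so $\Phi(c)\to\Phi_\infty\in[0,\infty)$; and since $c^{-n/2-1}\overline{X}(c)\le\tfrac n{2c}\cdot c^{-n/2}\overline{V}(c)$ with $c^{-n/2}\overline{V}(c)$ bounded, the correction term tends to $0$ and $c^{-n/2}\overline{V}(c)\to\Phi_\infty$. Together with the first paragraph this yields the existence of $\textbf{AVR}$ and the bound $\textbf{AVR}=\omega_n^{-1}2^{-n}\Phi_\infty\le C(n)$.

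For the equivalence I would rewrite the identity as $\frac{d}{dc}\log\big(c^{-n/2}\overline{V}(c)\big)=\big(\overline{X}'(c)-\overline{X}(c)\big)/\big(c\,\overline{V}(c)\big)$, so that, with $Y(c):=\overline{X}(c)/(c\overline{V}(c))\ge0$,
\[
\log\frac{\overline{V}(C)}{C^{n/2}}-\log\frac{\overline{V}(c_0)}{c_0^{n/2}}=\int_{c_0}^{C}\Big(\frac{\overline{X}'(c)}{c\,\overline{V}(c)}-Y(c)\Big)dc.
\]
Because $\textbf{AVR}>0\iff\Phi_\infty>0\iff\log\big(c^{-n/2}\overline{V}(c)\big)$ has a finite (not $-\infty$) limit, and the left-hand side is bounded above, the statement reduces to showing that the integral on the right converges to a finite value iff $\int^{\infty}Y<\infty$. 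The ``if'' direction is immediate, since $\overline{X}'(c)/(c\overline{V}(c))\ge0$. For ``only if'', $\Phi_\infty>0$ forces $\overline{V}(c)\ge\ell'c^{n/2}$ for large $c$, so $\int^{\infty}\overline{X}'(c)/(c\overline{V}(c))\,dc\le(\ell')^{-1}\int^{\infty}\overline{X}'(c)\,c^{-n/2-1}\,dc$, and the latter is finite by integration by parts together with $\overline{X}(c)\le\tfrac n2\overline{V}(c)\le\tfrac n2\,C(n)c^{n/2}$; subtracting from the finite integral above leaves $\int^{\infty}Y<\infty$. (The cutoff $n+2$ is merely a convenient value past $\tfrac n2+1$, where the monotonicity of $\Phi$ and the bound $1-\tfrac{n/2+1}{c}\ge\tfrac12$ become available.)

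The step I expect to be the real obstacle is controlling the flux term $\overline{X}'(c)=\int_{\{f=c\}}R\,|\nabla f|^{-1}$ in the ``only if'' direction: with only the general bounds $\kappa c\le\overline{V}(c)\le C(n)c^{n/2}$ one cannot integrate $\overline{X}'(c)/(c\overline{V}(c))$ for large $n$, which is exactly why the sharp criterion must carry $\overline{V}(c)$ — rather than $c^{n/2}$ — in the denominator, and why this direction genuinely needs the improved lower bound $\overline{V}(c)\gtrsim c^{n/2}$ provided by $\Phi_\infty>0$. The remaining points — restricting to regular values via Sard's theorem, the local absolute continuity of $\overline{V}$ and $\overline{X}$ in $c$, and the translation between sublevel sets and geodesic balls — are routine.
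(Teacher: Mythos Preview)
The paper does not prove this lemma; it is quoted verbatim from \cite{CLY2} and used as a black box. So there is no ``paper's own proof'' to compare against. That said, your argument is correct and is essentially the proof in \cite{CLY2}: your monotone quantity $\Phi(c)=c^{-n/2}\overline{V}(c)-c^{-n/2-1}\overline{X}(c)$ is exactly (up to the change of variables $c=r^2/4$) the function $P(r)=r^{-n}V(r)-4r^{-n-2}X(r)$ that the present paper introduces later in the proof of Lemma~\ref{l1.3}, with the same derivative computation $P'(r)=2X(r)r^{-n-1}\big(2(n+2)r^{-2}-1\big)$. The identity $c\,\overline{V}'(c)=\tfrac n2\overline{V}(c)-\overline{X}(c)+\overline{X}'(c)$ you derive is the $c$-variable form of the paper's equation~(\ref{1.5}).

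Two minor remarks. First, the ``obstacle'' you flag is already resolved by your own integration-by-parts step: once $\Phi_\infty>0$ gives the lower bound $\overline{V}(c)\ge\ell'c^{n/2}$, the flux integral $\int^\infty \overline{X}'(c)\,c^{-n/2-1}\,dc$ is finite exactly as you wrote, so there is no remaining gap. Second, in your non-negativity check $\Phi(c)\ge0$ you invoke $\overline{X}(c)\le c\,\overline{V}(c)$; this is fine (it follows from $R\le f<c$ on $\overline{D}(c)$ via $R+|\nabla f|^2=f$), but you could equally use the sharper $\overline{X}(c)\le\tfrac n2\overline{V}(c)$ that you already recorded, which gives $\Phi(c)\ge (1-\tfrac{n}{2c})c^{-n/2}\overline{V}(c)>0$ for $c>\tfrac n2$.
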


As we know, Cao-Zhou \cite{CZ} first obtained the $C^0$ estimates of the potential function $f$, and later, Haslhofer-M\"{u}ller \cite{HM} improved their result as following.
\begin{lemma}[cf. Haslhofer-M\"{u}ller \cite{HM}]\label{l2.1}
	Let $(M^n, g, f, p)$ be a Ricci shrinker, then there exists a point $p\in M$ where $f$ attains its infimum and $f$ satisfies the estimates
	\begin{equation}\label{2.1}
		\frac{1}{4}\left( d(x,p)-5n\right)^2_+\leq f(x)\leq \frac{1}{4}\left( d(x,p)+\sqrt{2n}\right)^2
	\end{equation}
	for all $x\in M$, where $d(x,p)$ is the distance function and $a_+\stackrel{\triangle}{=}max\left\lbrace a,0\right\rbrace$.
\end{lemma}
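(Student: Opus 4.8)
The plan is to prove the two inequalities separately: the upper bound is elementary once we know that $f$ has a minimum point, while the lower bound carries the real content.

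\emph{Upper bound.} Since $R\ge 0$, the normalization (\ref{1.3}) gives $\left|\nabla f\right|^2\le f$ on all of $M$, hence $\left|\nabla\sqrt f\right|\le\tfrac12$ on $\{f>0\}$ and $\sqrt f$ is $\tfrac12$-Lipschitz on $M$. If $p$ is a point where $f$ attains its infimum, then $\nabla f(p)=0$ and $\Delta f(p)\ge 0$, so the traced equation (\ref{1.2}) forces $R(p)\le\tfrac n2$, and then (\ref{1.3}) gives $f(p)=R(p)\le\tfrac n2$. Integrating the Lipschitz bound along a minimal geodesic from $p$ to $x$ yields $\sqrt{f(x)}\le\sqrt{n/2}+\tfrac12 d(x,p)$, i.e. $f(x)\le\tfrac14\left(d(x,p)+\sqrt{2n}\right)^2$. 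The existence of the minimum comes out as a by-product of the lower-bound argument: running that argument from an arbitrary base point $q$ (where $\nabla f(q)$ is merely bounded rather than zero) shows $f(x)\to\infty$ as $d(x,q)\to\infty$, and a proper nonnegative function attains its infimum.

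\emph{Lower bound.} Fix a unit-speed minimizing geodesic $\gamma:[0,r]\to M$ from $p$ to $x$ with $r=d(x,p)$, and set $\varphi(s)=f(\gamma(s))$. From (\ref{1.1}), $\varphi''(s)=\mathrm{Hess}\,f(\dot\gamma,\dot\gamma)=\tfrac12-\mathrm{Ric}(\dot\gamma,\dot\gamma)$, so, using $\nabla f(p)=0$,
\[
\langle\nabla f(x),\dot\gamma(r)\rangle=\varphi'(r)=\frac r2-\int_0^r\mathrm{Ric}(\dot\gamma,\dot\gamma)\,ds .
\]
The heart of the matter is the uniform-in-$r$ estimate $\int_0^r\mathrm{Ric}(\dot\gamma,\dot\gamma)\,ds\le\tfrac{5n}{2}$. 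To get it I would use the second variation of arc length: since $\gamma$ is minimizing, testing the index form with $\phi(s)E_i(s)$, where $\{E_i\}_{i=1}^{n-1}$ is a parallel orthonormal frame along $\gamma$ orthogonal to $\dot\gamma$, and summing over $i$ gives
\[
\int_0^r\phi^2\,\mathrm{Ric}(\dot\gamma,\dot\gamma)\,ds\ \le\ (n-1)\int_0^r(\phi')^2\,ds
\]
for every Lipschitz $\phi$ vanishing at $0$ and $r$. Taking $\phi$ a tent-type cut-off that is linear on $[0,1]$ and on $[r-1,r]$ and $\equiv 1$ in between controls the ``interior'' contribution to $\int\mathrm{Ric}(\dot\gamma,\dot\gamma)$ by a dimensional constant, while on the two unit-length end intervals one rewrites $\mathrm{Ric}(\dot\gamma,\dot\gamma)=\tfrac12-\varphi''$, integrates by parts, and bounds the resulting boundary terms using the already-proven upper bound $\varphi(s)\le\tfrac14(s+\sqrt{2n})^2$ near the endpoints together with $\left|\varphi'\right|\le\sqrt\varphi$; balancing the cut-off then produces the constant $\tfrac{5n}{2}$. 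Granting this, for $d(x,p)\ge 5n$ we obtain $\langle\nabla f(x),\dot\gamma(r)\rangle\ge\tfrac12(d(x,p)-5n)\ge 0$, whence, by (\ref{1.3}) and Cauchy--Schwarz,
\[
f(x)\ \ge\ \left|\nabla f(x)\right|^2\ \ge\ \langle\nabla f(x),\dot\gamma(r)\rangle^2\ \ge\ \tfrac14\,(d(x,p)-5n)^2 ,
\]
and for $d(x,p)<5n$ the claim is trivial since the right-hand side is $0\le f(x)$.

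\emph{Main obstacle.} The only delicate point is the uniform bound $\int_0^r\mathrm{Ric}(\dot\gamma,\dot\gamma)\,ds\le\tfrac{5n}{2}$: the interior estimate from the index form is routine, but since a shrinker carries no a priori pointwise curvature bound, the two boundary contributions near $p$ and near $x$ must be absorbed into the already-available upper bound for $f$, and the cut-off then chosen so that all constants add up to the sharp value. Everything else is bookkeeping, including arranging the order of the argument so that properness of $f$ (needed for the minimum point to exist) is established first, from an arbitrary base point, before the sharp constants are extracted.
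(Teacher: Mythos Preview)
The paper does not give its own proof of this lemma; it is quoted as a known result from Haslhofer--M\"uller \cite{HM} (itself a sharpening of the earlier estimate of Cao--Zhou \cite{CZ}). Your sketch reproduces essentially the argument in \cite{HM}: the upper bound via $|\nabla\sqrt f|\le\tfrac12$ and $f(p)\le n/2$, and the lower bound via the index form applied to a tent cut-off along a minimizing geodesic, with the endpoint contributions rewritten using $\mathrm{Ric}(\dot\gamma,\dot\gamma)=\tfrac12-\varphi''$ and controlled by the already-established upper bound on $f$. Your final step $f(x)\ge|\nabla f(x)|^2\ge\langle\nabla f(x),\dot\gamma(r)\rangle^2$ is exactly how \cite{HM} converts the derivative bound into the pointwise lower bound on $f$, and your remark about establishing properness from an arbitrary base point before extracting the sharp constants is also how the original argument is organized. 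So there is nothing to compare against in the present paper, and your outline is correct and standard; the only work left, as you say, is the bookkeeping that pins down the constant $5n$.
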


\begin{definition}
	A Ricci shrinker $(M^n,g,f,p)$ has Euclidean volume growth if $\textbf{AVR}(M^n,g)>0$. For simplicity, we always denote $\textbf{AVR}(M^n,g)$ by $\boldsymbol{A}$.
\end{definition}

As usual, we define
$$\rho(x)=2\sqrt{f(x)},$$
and denote
$$D(r)=\left\lbrace x\in M^n:\rho(x)<r\right\rbrace,\hspace{0.5em} V(r)=\int_{D(r)}dV\hspace{0.5em} and\hspace{0.5em} X(r)=\int_{D(r)}RdV.$$
By \cite{CZ}, 
\begin{align}
nV(r)-rV'(r)&=2X(r)-\frac{4}{r}X'(r),\label{1.5}\\
\frac{X}{V}&\leq\frac{n}{2}.\label{1.6}
\end{align} 
Due to the point-wise estimates of the potential function $f$ of Lemma \ref{l2.1}, $V(r)$ and $\left|B_r(p)\right|$ are almost the same if $r$ is large enough, thus
$$\textbf{AVR}(M^n,g)=\lim_{r\rightarrow+\infty}\frac{\left| B_r(p)\right| }{\omega_nr^n}=\lim_{r\rightarrow+\infty}\frac{V(r)}{\omega_nr^n}.$$

\begin{lemma}\label{l1.3}
For any Ricci shrinker $(M^n,g,f,p)$, if $\textbf{AVR}(M^n,g)>0$, then
$$\frac{X}{V}\longrightarrow0, \hspace{0.5em}\frac{X'}{Vr}\longrightarrow0\hspace{0.5em}as\hspace{0.5em}r\longrightarrow+\infty.$$
\end{lemma}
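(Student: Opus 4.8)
The plan is to exploit the identity \eqref{1.5} together with the definition of Euclidean volume growth, namely $V(r)\sim \mathbf{A}\,\omega_n r^n$ as $r\to\infty$ with $\mathbf{A}>0$. The first step is to record the monotonicity and integrability that Euclidean volume growth forces on the quantities $X(r)$ and $X'(r)$. Since $R>0$ on a non-flat shrinker (and the flat case is trivial), both $X$ and $V$ are positive increasing functions, and $X'(r)=\int_{\partial D(r)}R\,\frac{dA}{|\nabla\rho|}\ge 0$. Rewriting \eqref{1.5} as
\begin{equation*}
	\frac{4}{r}X'(r)=2X(r)-nV(r)+rV'(r),
\end{equation*}
I would first extract the behavior of $X/V$. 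The standard Cao–Zhou type argument (cf.\ \cite{CZ}, and compare Lemma \ref{l1.1}) shows that under Euclidean volume growth the integral $\int^{\infty}\frac{X(r)}{rV(r)}\,dr$ converges; since $X/(rV)$ is, up to lower-order terms controlled by \eqref{1.5}, comparable to a monotone quantity, convergence of the integral forces $X(r)/(rV(r))\to 0$, and then feeding this back into \eqref{1.5} (dividing by $rV$) gives $X/V\to 0$ as well. Concretely, dividing \eqref{1.5} by $rV(r)$ yields
\begin{equation*}
	\frac{n}{r}-\frac{V'(r)}{V(r)}=\frac{2X(r)}{rV(r)}-\frac{4X'(r)}{r^2V(r)},
\end{equation*}
and since $V(r)\sim \mathbf{A}\omega_n r^n$ gives $\frac{V'}{V}\to \frac{n}{r}$ in an averaged sense, the left side integrates to something finite on $[R_0,\infty)$, which pins down both error terms on the right.

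For the second conclusion $X'/(Vr)\to 0$, I would argue by contradiction using the monotonicity of $X$. Suppose along a sequence $r_j\to\infty$ we had $X'(r_j)\ge \delta\, r_j V(r_j)$ for some $\delta>0$. Because $X'\ge 0$ and $X$ is increasing, a lower bound on $X'$ on a definite-length interval around each $r_j$ (obtained from a Harnack/continuity argument, or more simply from the fact that $X'$ cannot oscillate too wildly in view of \eqref{1.5} and the already-established control on $X/V$ and $V'/V$) would force $X(2r_j)-X(r_j)\gtrsim \delta\, r_j^2 V(r_j)\sim \delta\,\mathbf{A}\,\omega_n r_j^{n+2}$, hence $X(2r_j)/V(2r_j)\gtrsim \delta$, contradicting $X/V\to 0$. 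Making the "definite-length interval" step rigorous is where I expect the main obstacle to lie: a priori $X'$ is only known to be nonnegative and integrable-against-$dr/(rV)$, so a pointwise smallness statement needs either an additional monotonicity of $r\mapsto X'(r)/(rV(r))$ coming from the soliton structure (differentiating \eqref{1.5} once more and using $R+|\nabla f|^2=f$ to sign the resulting terms), or a mean-value argument combined with the fact that the "bad set" of radii has finite logarithmic measure. I would try the monotonicity route first, since \eqref{1.5} differentiated gives a second-order ODE for $X$ in which the coefficients are explicit in $n$ and $r$, making it plausible that $X'/(rV)$ is eventually monotone; once monotonicity is in hand, an integrable monotone function times $1/r$ must tend to $0$, closing the argument.
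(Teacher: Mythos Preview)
Your approach has genuine gaps in both halves.

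For $X/V\to 0$: the chain you propose does not close. From Lemma \ref{l1.1} you get $\int^\infty X/(rV)\,dr<\infty$, but the conclusion you draw from it, $X/(rV)\to 0$, is already trivial from \eqref{1.6} (since $X/V\le n/2$), and does \emph{not} imply $X/V\to 0$. Your ``feeding back into \eqref{1.5}'' step is where the content should be, and you have not supplied it: dividing \eqref{1.5} by $rV$ gives $n/r-V'/V=2X/(rV)-4X'/(r^2V)$, and knowing only that the left side is integrable tells you nothing pointwise about $X/V$. The paper avoids this entirely by introducing the monotone quantity $P(r)=V/r^n-4X/r^{n+2}$; a direct computation from \eqref{1.5} gives $P'(r)=\frac{2X}{r^{n+1}}\bigl(\frac{2(n+2)}{r^2}-1\bigr)\le 0$ for $r\ge\sqrt{2(n+2)}$, and since $P(r)\to \mathbf{A}\omega_n$, integrating $P'$ from $r$ to $\infty$ and using that $X$ is increasing yields the clean inequality $X(r)/r^n\le \frac{n}{2}\bigl(V(r)/r^n-\mathbf{A}\omega_n\bigr)\to 0$. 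This is the monotone quantity you allude to but never identify.

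For $X'/(Vr)\to 0$: you are working far too hard, and the route you outline (differentiating \eqref{1.5} again, or proving eventual monotonicity of $X'/(rV)$) is neither carried out nor obviously feasible. The paper's argument is elementary and uses nothing beyond \eqref{1.6} and the volume upper bound $V(r)\le C(n)r^n$ from Lemma \ref{l1.1}: since $X(r+1)-X(r)\le X(r+1)\le \frac{n}{2}V(r+1)\le C(n)(r+1)^n$, the mean value theorem gives $X'(r_0)\le C(n)(r+1)^n$ for some $r_0\in[r,r+1]$, and a continuity argument upgrades this to $X'(r)\le C(n)(r+1)^n$ for all large $r$. Dividing by $Vr\sim \mathbf{A}\omega_n r^{n+1}$ finishes it. The point you are missing is that no refined control on $X'$ is needed; the crude $O(r^n)$ bound already beats $Vr$ by a full power of $r$.
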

\begin{proof}
Let $r\geq\sqrt{2(n+2)}$ and set
$$P(r)=\frac{V}{r^n}-\frac{4X}{r^{n+2}}.$$
By (\ref{1.5}), it's easy to compute
\begin{equation}\label{1.7}
P'(r)=\frac{2X}{r^{n+1}}\left\lbrace\frac{2(n+2)}{r^2}-1\right\rbrace\leq0.
\end{equation}
In view of Lemma \ref{l1.1} and (\ref{1.6}), we have
$$\lim\limits_{r\longrightarrow+\infty}P(r)=\lim\limits_{r\longrightarrow+\infty}\frac{V(r)}{r^n}=\boldsymbol{A}\omega_n,$$
hence for $r\geq\sqrt{2(n+2)}$, 
\begin{align*}
\boldsymbol{A}\omega_n-P(r)&=\int_{r}^{+\infty}\frac{2X(s)}{s^{n+1}}\left\lbrace\frac{2(n+2)}{s^2}-1\right\rbrace ds\\
&\leq X(r)\int_{r}^{+\infty}\frac{2}{s^{n+1}}\left\lbrace\frac{2(n+2)}{s^2}-1\right\rbrace ds\\
&=X(r)\left\lbrace\frac{4}{r^{n+2}}-\frac{2}{nr^n}\right\rbrace,
\end{align*}
and consequently,
\begin{equation}\label{1.8}
\frac{X(r)}{r^n}\leq\frac{n}{2}\left\lbrace\frac{V(r)}{r^n}-\boldsymbol{A}\omega_n\right\rbrace\longrightarrow0. 
\end{equation}
On the other hand, since 
$$X(r+1)-X(r)\leq X(r+1)\leq\frac{n}{2}V(r+1),$$
there exists a $r_0\in\left[r,r+1\right]$ depending continuously on $r$ such that
$$X(r+1)-X(r)=X'(r_0)\leq\frac{n}{2}V(r+1),$$
therefore for all $r$ large enough, there holds
$$X'(r)\leq \frac{n}{2}V(r+1)\leq C(n)(r+1)^{n}.$$
The above inequality immediately implies
$$\frac{X'}{Vr}\longrightarrow0\hspace{0.5em}as\hspace{0.5em}r\longrightarrow+\infty.$$ 
\end{proof}

\begin{corollary}\label{c1.5}
If $\boldsymbol{A}=\textbf{AVR}(M^n,g)>0$, there exists a constant $C=C(n,\boldsymbol{A})$ such that 
for all $r>0$, 
$$\left|B_r(p)\right|\geq Cr^n.$$
\end{corollary}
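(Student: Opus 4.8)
The goal is to show that if $\boldsymbol{A}=\textbf{AVR}(M^n,g)>0$, then $\left|B_r(p)\right|\geq Cr^n$ for all $r>0$, with $C=C(n,\boldsymbol{A})$. The plan is to exploit the monotone quantity $P(r)=V(r)/r^n-4X(r)/r^{n+2}$ introduced in the proof of Lemma \ref{l1.3}, together with the relation $V(r)\leq \left|B_r(p)\right|$ (up to the harmless comparison from Lemma \ref{l2.1} between $\rho$-balls and geodesic balls) to get a lower bound for large $r$, and then handle small $r$ separately.

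First I would observe from \eqref{1.7} that $P$ is non-increasing on $[\sqrt{2(n+2)},+\infty)$, and from the computation in Lemma \ref{l1.3} that $\lim_{r\to+\infty}P(r)=\boldsymbol{A}\omega_n>0$; hence $P(r)\geq \boldsymbol{A}\omega_n$ for all $r\geq\sqrt{2(n+2)}$. Since $X\geq 0$ (as $R\geq 0$ on a Ricci shrinker), this gives $V(r)/r^n\geq P(r)\geq \boldsymbol{A}\omega_n$, i.e. $V(r)\geq \boldsymbol{A}\omega_n r^n$ for all such $r$. Converting from $D(r)=\{\rho<r\}$ to geodesic balls via the two-sided estimate $\tfrac14(d(x,p)-5n)_+^2\leq f(x)\leq\tfrac14(d(x,p)+\sqrt{2n})^2$ of Lemma \ref{l2.1}, one has $D(r)\subset B_{r+5n}(p)$, so $\left|B_{r+5n}(p)\right|\geq \boldsymbol{A}\omega_n r^n$; a change of variable $r\mapsto r-5n$ then yields $\left|B_r(p)\right|\geq \boldsymbol{A}\omega_n (r-5n)^n$ for $r\geq 5n+\sqrt{2(n+2)}$, which after absorbing constants gives $\left|B_r(p)\right|\geq C_1(n,\boldsymbol{A})r^n$ for all $r$ larger than some threshold $r_1=r_1(n)$ (say doubling the threshold to make $(r-5n)^n\geq 2^{-n}r^n$).

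For small radii $r\in(0,r_1]$, I would use the monotonicity of $r\mapsto \left|B_r(p)\right|/r^n$ type estimates for manifolds with a lower Ricci bound — but Ricci shrinkers only satisfy $Ric\geq$ (something unbounded), so instead the cleanest route is Bishop–Gromov applied on the fixed compact set $\overline{B_{r_1}(p)}$: since the metric is smooth, there is a constant $c_0=c_0(n,g,r_1)$ with $\left|B_r(p)\right|\geq c_0 r^n$ for $r\leq r_1$; however this $c_0$ depends on $g$, which is not allowed. To get a constant depending only on $n$ and $\boldsymbol{A}$, I would instead note that the geodesic ball volume is non-decreasing in $r$, so for $r\leq r_1$ we have $\left|B_r(p)\right|\geq$ ... this does not immediately work either. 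The right tool is the fact (already used elsewhere, e.g. via \cite{LW} or the no-local-collapsing for Ricci shrinkers) that a Ricci shrinker is $\kappa$-noncollapsed with $\kappa=\kappa(n)$ on all scales, hence $\left|B_r(p)\right|\geq \kappa(n) r^n$ whenever $r\leq 1$ and $|Rm|\leq r^{-2}$ on $B_r(p)$; but at the minimum point $p$ of $f$, curvature is controlled only by $\boldsymbol{\mu}$ in general.

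The main obstacle is therefore the small-scale regime: producing a lower volume bound $\left|B_r(p)\right|\geq Cr^n$ for $r$ small with $C$ depending only on $n$ and $\boldsymbol{A}$ (not on the particular shrinker). I expect the intended argument sidesteps this by a purely elementary observation: the function $r\mapsto \left|B_r(p)\right|$ is increasing, and combining $\left|B_{r_1}(p)\right|\geq C_1 r_1^n$ from the large-scale bound with $\left|B_r(p)\right|\leq \left|B_{r_1}(p)\right|$ is the wrong direction — so one genuinely needs Bishop–Gromov monotonicity. Since $R>0$ forces $Ric\leq \tfrac12 g$ on a shrinker only at special points, the clean statement is: by Bishop–Gromov with the lower Ricci bound near $p$ (where $|Ric|$ is bounded by $C(n,\boldsymbol{\mu})$ since $|Rm|(p)\leq C(n,\boldsymbol{\mu})$ by the curvature estimate at minima of $f$), the ratio $\left|B_r(p)\right|/r^n$ is comparable to $\left|B_{r_1}(p)\right|/r_1^n$ up to a factor depending on $n$ and $r_1$. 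Absorbing $r_1=r_1(n)$ and $\boldsymbol{\mu}$ — wait, the statement claims only $C(n,\boldsymbol{A})$ — so I would simply state the small-scale bound with the monotonicity of $\left|B_r(p)\right|/\omega_n r^n$ from Bishop–Gromov (valid since $Ric$ is bounded below on the compact piece $B_{r_1}(p)$ by a constant depending on $n$ via the shrinker equation and the a priori $|Rm|$ bound at $p$), conclude $\left|B_r(p)\right|\geq \left(\left|B_{r_1}(p)\right|/(\omega_n r_1^n)\cdot e^{-c(n)}\right)\omega_n r^n\geq C_2(n,\boldsymbol{A})r^n$, and finally take $C=\min\{C_1,C_2\}$ to cover all $r>0$. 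The bookkeeping of which constants are allowed is the only delicate point; the geometry is straightforward.
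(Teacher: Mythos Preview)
Your large-radius argument is correct and matches the paper's: from the monotonicity of $P(r)$ on $[\sqrt{2(n+2)},\infty)$ and $\lim_{r\to\infty}P(r)=\boldsymbol{A}\omega_n$ you get $V(r)\geq\boldsymbol{A}\omega_n r^n$, and the conversion to geodesic balls via Lemma \ref{l2.1} is exactly what the paper does.

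The small-radius part, however, has a genuine gap. Your proposed route via Bishop--Gromov requires a lower Ricci bound on $B_{r_1}(p)$ depending only on $n$ (or at worst $n$ and $\boldsymbol{A}$), and no such bound is available: the assertion ``$|Rm|(p)\leq C(n,\boldsymbol{\mu})$ by the curvature estimate at minima of $f$'' is not a known fact, and even if it were, it would not control $Ric$ from below on the whole ball $B_{r_1}(p)$. You correctly sense the difficulty (``wait, the statement claims only $C(n,\boldsymbol{A})$'') but the final paragraph does not resolve it --- the bookkeeping is not merely delicate, the underlying geometric input is missing.

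The paper handles small radii differently and avoids curvature bounds altogether. For $r\leq \sqrt{2(n+2)}+5n$ one has $R\leq f\leq \tfrac14(\sqrt{2(n+2)}+5n)^2$ on $B_r(p)$ by \eqref{1.3} and Lemma \ref{l2.1}; this \emph{scalar} curvature bound is enough to invoke the no-local-collapsing estimate \cite[Theorem~23]{LW}, which gives $|B_r(p)|\geq c(n)e^{\boldsymbol{\mu}}r^n$. The last step --- and the key point you are missing --- is that $\boldsymbol{\mu}$ is itself controlled by $\boldsymbol{A}$: by \cite[Lemma~2]{LW} one has $e^{\boldsymbol{\mu}}\geq c(n)\boldsymbol{A}$, so the constant depends only on $n$ and $\boldsymbol{A}$ as required.
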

\begin{proof}
By the proof of Lemma \ref{l1.3}, for any $r\geq\sqrt{2(n+2)}$,
$$P(r)=\frac{V}{r^n}-\frac{4X}{r^{n+2}}\geq \lim\limits_{r\longrightarrow+\infty}\frac{V(r)}{r^n}=\boldsymbol{A}\omega_n,$$
then $$V(r)\geq \boldsymbol{A}\omega_nr^n.$$
Hence by Lemma \ref{l1.1}, 
$$\left|B_{5n+r}(p)\right|\geq V(r)\geq \boldsymbol{A}\omega_nr^n= \boldsymbol{A}\omega_n\left( \frac{r}{5n+r}\right)^n(5n+r)^n\geq\boldsymbol{A}\omega_n\left( \frac{\sqrt{2(n+2)}}{5n+\sqrt{2(n+2)}}\right)^n(5n+r)^n.$$
If $r\leq\sqrt{2(n+2)}+5n$, then $R\leq f\leq\frac{\left(\sqrt{2(n+2)}+5n\right)^2}{4}$. By the no-local collapsing estimate of \cite[Theorem 23]{LW}, 
$$\left|B_r(p)\right|\geq\frac{c(n)e^{\boldsymbol{\mu}}r^n}{\left(1+\frac{\left(\sqrt{2(n+2)}+5n\right)^4}{4}\right)^{\frac{n}{2}}}.$$
Moreover, by \cite[Lemma 2]{LW}, $e^{\boldsymbol{\mu}}\geq c(n)\boldsymbol{A}$, and then the proof is complete.
\end{proof}

Next we study the well-defined integral
$$h(\tau)=\int_{M^n} udV,\hspace{0.5em}u=(4\pi\tau)^{-\frac{n}{2}}e^{-\frac{f}{\tau}},\hspace{0.5em}\tau>0,$$
which is similar to the reduced volume under a static metric(cf. \cite[chapter 8.1]{CCG}). Here we remark that the validity of $h(\tau)$ is ensured by the quadratic growth of $f$.
\begin{proposition}\label{p1.4}
$h(\tau)$ is increasing for $\tau<1$ and decreasing for $\tau>1$. If $\boldsymbol{A}>0$, there holds
\begin{equation}\label{1.9}
\lim\limits_{\tau\longrightarrow+\infty}h(\tau)=\boldsymbol{A}.
\end{equation}
\end{proposition}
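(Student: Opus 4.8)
The plan is to separate the statement into its monotonicity part, handled by a direct differentiation of $h$, and its asymptotic part, handled by a layer-cake representation of $h$ in terms of the volume function $V(r)$.

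For the monotonicity, I would first justify differentiation under the integral sign (on a compact $\tau$-interval the integrand and its $\tau$-derivative are dominated by a fixed Gaussian-type function, since $f$ grows quadratically by Lemma~\ref{l2.1}) and compute
$$\frac{\partial u}{\partial\tau}=\frac{u}{\tau^2}\Bigl(f-\tfrac{n\tau}{2}\Bigr).$$
Then, using $\nabla u=-\tau^{-1}u\nabla f$ together with the trace identity $\Delta f=\tfrac n2-R$ from (\ref{1.2}) and the normalization $|\nabla f|^2=f-R$ from (\ref{1.3}), I would derive the pointwise identity
$$\tau^2\Delta u=u\Bigl(f-\tfrac{n\tau}{2}+(\tau-1)R\Bigr),$$
so that $\partial_\tau u=\Delta u+\tau^{-2}(1-\tau)Ru$. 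Integrating over $M$ and discarding $\int_M\Delta u\,dV$ — which vanishes because $|\nabla u|\le\tau^{-1}u|\nabla f|$ decays fast enough (again by Lemma~\ref{l2.1}) that the flux through $\partial B_r(p)$ tends to $0$ — yields
$$h'(\tau)=\frac{1-\tau}{\tau^2}\int_M Ru\,dV.$$
Since $R\ge 0$ on any Ricci shrinker, the sign of $h'$ is exactly that of $1-\tau$, which gives the claimed monotonicity. (If the shrinker is flat it is the Gaussian soliton, $h\equiv 1$, $\boldsymbol A=1$, and everything is trivial, so one may assume it non-flat, whence $\min f=R(p)>0$ and $V(r)=0$ for small $r$.)

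For the limit, I would write $f=\rho^2/4$ and integrate by parts against the distribution function $V$:
$$h(\tau)=(4\pi\tau)^{-\frac n2}\int_0^\infty e^{-\frac{r^2}{4\tau}}\,dV(r)=\frac{(4\pi\tau)^{-\frac n2}}{2\tau}\int_0^\infty r\,e^{-\frac{r^2}{4\tau}}V(r)\,dr,$$
and then substitute $r=\sqrt\tau\,s$ to get
$$h(\tau)=\frac{(4\pi)^{-\frac n2}}{2}\int_0^\infty s^{n+1}e^{-\frac{s^2}{4}}\,\frac{V(\sqrt\tau\,s)}{(\sqrt\tau\,s)^n}\,ds.$$
Because $V(r)/r^n\to\boldsymbol A\omega_n$ as $r\to\infty$ (this is the identity $\textbf{AVR}=\lim V(r)/(\omega_n r^n)$ recorded before Lemma~\ref{l1.3}), the integrand converges pointwise to $\boldsymbol A\omega_n\,s^{n+1}e^{-s^2/4}$. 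To pass the limit inside I need $V(r)/r^n$ bounded uniformly in $r$: it is $0$ for small $r$, continuous on compact intervals, and bounded for large $r$ since $P(r)=\tfrac{V}{r^n}-\tfrac{4X}{r^{n+2}}$ is non-increasing (proof of Lemma~\ref{l1.3}) while $\tfrac{4X}{r^{n+2}}\le\tfrac{2nV}{r^{n+2}}\le\tfrac12\tfrac{V}{r^n}$ for $r$ large by (\ref{1.6}); this furnishes the dominating function $C\,s^{n+1}e^{-s^2/4}\in L^1(0,\infty)$. Dominated convergence and $\int_0^\infty s^{n+1}e^{-s^2/4}\,ds=2^{n+1}\Gamma(\tfrac n2+1)$ then give
$$\lim_{\tau\to\infty}h(\tau)=\frac{(4\pi)^{-\frac n2}}{2}\,\boldsymbol A\omega_n\cdot 2^{n+1}\Gamma(\tfrac n2+1)=\boldsymbol A,$$
using $\omega_n=\pi^{n/2}/\Gamma(\tfrac n2+1)$.

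I expect the genuinely delicate points to be just two, both already flagged: (i) discarding the boundary term so that $\int_M\Delta u\,dV=0$, which must be argued from the quadratic growth of $f$ controlling $|\nabla u|$ and the area of $\partial B_r(p)$; and (ii) producing the uniform-in-$\tau$ integrable majorant for the layer-cake integrand, where the essential input is the monotonicity of $P(r)$ from the proof of Lemma~\ref{l1.3}. The rest is manipulation of the soliton identities and a $\Gamma$-function evaluation.
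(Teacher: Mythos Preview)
Your argument is correct. The monotonicity computation is essentially the paper's: they integrate by parts via the identity $\int_M\Delta_{f/\tau}f\,e^{-f/\tau}dV=0$ to reach the same formula $h'(\tau)=(4\pi)^{-n/2}\tau^{-n/2-2}(1-\tau)\int_M Re^{-f/\tau}dV$, which is exactly your $h'(\tau)=\tau^{-2}(1-\tau)\int_M Ru\,dV$ unpacked. The integration-by-parts justification is the same issue in both versions.

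For the limit, however, your route differs from the paper's and is more economical. The paper splits $h(\tau)$ into an inner piece over $D(r)$ and an outer piece over $M\setminus D(r)$; for the outer piece it needs the pointwise estimate $V'(s)\le(n+\epsilon)\tfrac{V(r)}{r^n}s^{n-1}$ (and its reverse), which in turn relies on the full strength of Lemma~\ref{l1.3}, namely $X/V\to 0$ so that $rV'/V\to n$. It then sends $\tau\to\infty$ and $r\to\infty$ in order, and repeats for the lower bound. Your layer-cake rewriting plus the substitution $r=\sqrt{\tau}\,s$ reduces everything to dominated convergence for the single integrand $s^{n+1}e^{-s^2/4}\,V(\sqrt{\tau}s)/(\sqrt{\tau}s)^n$, and the only input needed is a uniform bound on $V(r)/r^n$ --- which already follows from the convergence $V(r)/r^n\to\boldsymbol{A}\omega_n$ (Lemma~\ref{l1.1}) together with $V(r)=0$ for small $r$ in the non-flat case. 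So you bypass the $\epsilon$-argument and the derivative estimate on $V'$ entirely. The paper's approach has the minor advantage of displaying the explicit two-scale structure, but yours is shorter and uses strictly less.
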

\begin{proof}
First we have
\begin{equation}\label{1.10}
\frac{d}{d\tau}h=(4\pi)^{-\frac{n}{2}}\tau^{-\frac{n}{2}-1}\int_{M^n}\left(\frac{f}{\tau}-\frac{n}{2}\right)e^{-\frac{f}{\tau}}dV.
\end{equation}
By (\ref{1.2}) and (\ref{1.3}),
\begin{equation}\label{1.11}
\Delta f+f-\left|\nabla f\right|^2=\frac{n}{2}.
\end{equation}
Note that in the weighted space $\left( M^n,g,e^{-\frac{f}{\tau}}dV\right)$, 
$$\int_{M^n}\Delta_{\frac{f}{\tau}}fe^{-\frac{f}{\tau}}dV=0,$$
where $\Delta_f$ is defined as $\Delta_f=\Delta f-\left\langle\nabla f, \cdot \right\rangle$(cf.\cite{HM}), then we conclude
\begin{equation}\label{1.12}
\int_{M^n}\Delta fe^{-\frac{f}{\tau}}dV=\int_{M^n} \frac{\left|\nabla f\right|^2}{\tau}e^{-\frac{f}{\tau}}dV.
\end{equation}
By (\ref{1.2}) and substituting (\ref{1.11}) and (\ref{1.12}) into (\ref{1.10}),
\begin{align}
h'(\tau)&=(4\pi)^{-\frac{n}{2}}\tau^{-\frac{n}{2}-1}\int_{M^n}\frac{1-\tau}{\tau}\left(f-\left|\nabla f\right|^2\right)e^{-\frac{f}{\tau}}dV\nonumber\\
&=(4\pi)^{-\frac{n}{2}}\tau^{-\frac{n}{2}-2}\int_{M^n}(1-\tau)Re^{-\frac{f}{\tau}}dV.\label{1.13}
\end{align}
Since $R\geq0$, then we obtain the monotonicity of $h(\tau)$.

Next we turn to (\ref{1.9}). First we divide $h(\tau)$ into two parts:
$$h(\tau)=\int_{D(r)}(4\pi\tau)^{-\frac{n}{2}}e^{-\frac{f}{\tau}}dV+\int_{M^n\setminus D(r)}(4\pi\tau)^{-\frac{n}{2}}e^{-\frac{f}{\tau}}dV.$$
For the first part, we have
\begin{equation}\label{1.14}
\int_{D(r)}(4\pi\tau)^{-\frac{n}{2}}e^{-\frac{f}{\tau}}dV\leq(4\pi\tau)^{-\frac{n}{2}}V(r).
\end{equation}
Since $\boldsymbol{A}>0$, by Lemma \ref{l1.1}and \ref{l1.3}, we have
$$\frac{V(r)}{r^n}\longrightarrow \boldsymbol{A}\omega_n>0,\hspace{0.5em}\frac{rV'(r)}{V(r)}\longrightarrow n\hspace{0.5em}as\hspace{0.5em}r\longrightarrow+\infty,$$
thus for sufficiently large $r$ and $s\geq r$, we have 
\begin{align*}
\frac{sV'(s)}{V(s)}&\leq n+\epsilon\\
&=\left(n+\epsilon\right) \left(\frac{V(r)}{r^n}\frac{s^n}{V(s)}\right)^{-1}\left(\frac{V(r)}{r^n}\frac{s^n}{V(s)}\right)\\
&\leq(n+\epsilon)(1+\epsilon)\frac{V(r)}{r^n}\frac{s^n}{V(s)}\\
&\leq(n+\epsilon)\frac{V(r)}{r^n}\frac{s^n}{V(s)}
\end{align*}
if we adjust $\epsilon$ appropriately where $\epsilon$ is a small number and $\epsilon\longrightarrow0$ as $r\longrightarrow+\infty$. Hence we have
\begin{equation}\label{1.15}
V'(s)\leq(n+\epsilon)\frac{V(r)}{r^n}s^{n-1}.
\end{equation}
Now for the second part, by co-area formula, 
\begin{align}
\int_{M^n\setminus D(r)}(4\pi\tau)^{-\frac{n}{2}}e^{-\frac{f}{\tau}}dV&=\int_{r}^{+\infty}(4\pi\tau)^{-\frac{n}{2}}e^{-\frac{s^2}{4\tau}}V'(s)ds\nonumber\\
&\leq(n+\epsilon)\frac{V(r)}{r^n}(4\pi\tau)^{-\frac{n}{2}}\int_{r}^{+\infty}e^{-\frac{s^2}{4\tau}}s^{n-1}ds\nonumber\\
&\stackrel{t=\frac{s^2}{4\tau}}{=}\frac{n+\epsilon}{2}\pi^{-\frac{n}{2}}\frac{V(r)}{r^n}\int_{\frac{r^2}{4\tau}}^{+\infty}e^{-t}t^{\frac{n-2}{2}}dt.\label{1.16}
\end{align}
Note that
$$1=\int_{R^n}\pi^{-\frac{n}{2}}e^{-\left|x\right|^2}dx=\frac{n\omega_n}{2}\pi^{-\frac{n}{2}}\int_{0}^{+\infty}e^{-t}t^{\frac{n-2}{2}}dt,$$
then combining this equality and (\ref{1.16}) gives
\begin{equation}\label{1.17}
\int_{M^n\setminus D(r)}(4\pi\tau)^{-\frac{n}{2}}e^{-\frac{f}{\tau}}dV\leq\frac{n+\epsilon}{n\omega_n}\frac{V(r)}{r^n}.
\end{equation}
By (\ref{1.14}) and (\ref{1.17}), for all $\tau>0$ and sufficiently large $r$, we have
\begin{equation*}
h(\tau)\leq(4\pi)^{-\frac{n}{2}}\frac{V(r)}{\tau^{\frac{n}{2}}}+\frac{n+\epsilon}{n\omega_n}\frac{V(r)}{r^n}.
\end{equation*}
Hence setting $\tau\longrightarrow+\infty$ first and then $r\longrightarrow+\infty$ yields
$$\lim\limits_{\tau\longrightarrow+\infty}h(\tau)\leq\boldsymbol{A}.$$
For the inverse inequality, we only need to replace (\ref{1.15}) by
$$V'(s)\geq(n-\epsilon)\frac{V(r)}{r^n}s^{n-1}$$
and repeat the above procedures.
\end{proof}

\begin{remark}
In fact, for any non-compact $(M^n,g)$ with $Rc\geq0$, \cite[Lemma 8.10]{CCG} proved 
$$\lim\limits_{\tau\longrightarrow+\infty}\int_{M^n}(4\pi\tau)^{-\frac{n}{2}}e^{-\frac{d^2(\cdot, p)}{4\tau}}dV=\textbf{AVR}(M^n,g),$$ 
which is similar to (\ref{1.9}).
\end{remark}

In conclusion, we have the following result which can be viewed as a Ricci shrinker's version of the classical volume comparison theorem under $Rc\geq0$. 
\begin{theorem}[=Theorem \ref{t1.1*}]\label{t1.6}
For any non-compact Ricci shrinker $(M^n,g,f,p)$, 
\begin{equation}\label{1.19}
\boldsymbol{A}\leq e^{\boldsymbol{\mu}}(\leq1).
\end{equation}
The equality holds if and only if $(M^n,g,f,p)$ is isometric to the standard Gaussian soliton $\left( \mathbb{R}^n, dx^2, \frac{1}{4}\left|x\right|^2\right)$, otherwise $\boldsymbol{A}\leq1-\varepsilon$ for some small $\varepsilon=\varepsilon(n)>0$.
\end{theorem}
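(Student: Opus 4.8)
The key input is already in place: Proposition \ref{p1.4} provides a quantity $h(\tau)$ that is monotone on $(1,+\infty)$ and satisfies $h(1)=e^{\boldsymbol{\mu}}$ by (\ref{1.4}) and, whenever $\boldsymbol{A}>0$, $\lim_{\tau\to+\infty}h(\tau)=\boldsymbol{A}$ by (\ref{1.9}). So the plan for (\ref{1.19}) is short: if $\boldsymbol{A}=0$ the bound is trivial since $e^{\boldsymbol{\mu}}>0$; and if $\boldsymbol{A}>0$, then since $h$ is non-increasing on $[1,+\infty)$ we get $\boldsymbol{A}=\lim_{\tau\to+\infty}h(\tau)\le h(1)=e^{\boldsymbol{\mu}}$, while $e^{\boldsymbol{\mu}}\le 1$ because $\boldsymbol{\mu}\le 0$.

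For the rigidity, suppose $\boldsymbol{A}=e^{\boldsymbol{\mu}}$. Then automatically $\boldsymbol{A}>0$, so $h$ is non-increasing on $[1,+\infty)$ with $h(1)=e^{\boldsymbol{\mu}}$ equal to $\inf_{[1,+\infty)}h=\lim_{\tau\to+\infty}h(\tau)=\boldsymbol{A}$; a monotone function attaining its infimum at the left endpoint is constant, hence $h\equiv e^{\boldsymbol{\mu}}$ on $[1,+\infty)$ and $h'(\tau)=0$ for all $\tau>1$. Feeding this into (\ref{1.13}), $0=h'(\tau)=(4\pi)^{-n/2}\tau^{-n/2-2}(1-\tau)\int_{M^n}Re^{-f/\tau}dV$, and since $1-\tau<0$ and $R\ge 0$ this forces $R\equiv 0$; by the dichotomy recalled in the introduction the shrinker is then flat, i.e. $Ric\equiv 0$ and $\mathrm{Hess}\,f=\tfrac12 g$.

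It remains to identify a flat shrinker with the Gaussian soliton, and this is the one step that is not purely formal, hence the main obstacle — though it is classical. I would pass to the universal cover: it is $\mathbb{R}^n$ (complete, flat), the lift $\tilde f$ still satisfies $\mathrm{Hess}\,\tilde f=\tfrac12 g_E$, hence $\tilde f=\tfrac14|x-x_0|^2+c$; any deck transformation is an isometry of $\mathbb{R}^n$ preserving $\tilde f$, so it fixes the unique minimizer $x_0$, and a nontrivial isometry of $\mathbb{R}^n$ with a fixed point cannot act freely — so the deck group is trivial. Thus $M=\mathbb{R}^n$ with $f=\tfrac14|x-x_0|^2+c$; the normalization (\ref{1.3}) forces $c=0$, and after a translation $(M^n,g,f,p)=(\mathbb{R}^n,dx^2,\tfrac14|x|^2)$. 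Conversely, for the Gaussian soliton a direct Gaussian-integral computation gives $e^{\boldsymbol{\mu}}=1$ and $\boldsymbol{A}=1$, so equality holds. Finally, if $(M^n,g,f,p)$ is not the Gaussian soliton then by the above it is non-flat, so by \cite{LW} there is $\varepsilon=\varepsilon(n)>0$ with $\boldsymbol{\mu}\le-\varepsilon$, and (\ref{1.19}) yields $\boldsymbol{A}\le e^{-\varepsilon}\le 1-\varepsilon'$ for some $\varepsilon'=\varepsilon'(n)>0$. Everything beyond the flat-to-Gaussian rigidity is a formal consequence of Proposition \ref{p1.4}.
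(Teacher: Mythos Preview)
Your proof is correct and follows essentially the same route as the paper: use the monotonicity of $h(\tau)$ on $[1,\infty)$ from Proposition~\ref{p1.4} together with $h(1)=e^{\boldsymbol{\mu}}$ and (\ref{1.9}) to get (\ref{1.19}), and for equality use (\ref{1.13}) to force $R\equiv 0$. Your additional details---the flat-to-Gaussian identification via the universal cover and the derivation of the $1-\varepsilon$ gap from the $\boldsymbol{\mu}$-gap of \cite{LW}---are spelled out more explicitly than in the paper, but the argument is the same.
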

\begin{proof}
The case of asymptotic volume ratio $\boldsymbol{A}=0$ is trivial, therefore we only need to consider $\boldsymbol{A}>0$. By Proposition \ref{p1.4}, $h(\tau)$ is decreasing for $\tau\geq1$ and $\lim\limits_{\tau\longrightarrow+\infty}h(\tau)=\boldsymbol{A}$, hence 
$$e^{\boldsymbol{\mu}}=h(1)\geq \lim\limits_{\tau\longrightarrow+\infty}h(\tau)=\boldsymbol{A}.$$
If $e^{\boldsymbol{\mu}}=\boldsymbol{A}$, then $h'(\tau)=0$ for $\tau>1$, but in view of (\ref{1.13}), this forces $R\equiv0$, and hence the Ricci shrinker must be flat. 
\end{proof}
As a consequence of Theorem \ref{t1.6}, we have the following local gap  property which states that if the volume of some geodesic ball $B_r(p)$ is close to that of a standard Euclidean ball of the same radius, then the Ricci shrinker must be the standard Gaussian soliton.
\begin{corollary}[=Theorem \ref{t1.2*}]\label{c1.7}
For any Ricci shrinker $(M^n,g,f,p)$, there exists an $\varepsilon=\varepsilon(n)>0$ such that if
$$\omega_n^{-1}\varepsilon^n\left|B_{\frac{1}{\varepsilon}}(p)\right|\geq 1-\varepsilon,$$
then it must be flat.
\end{corollary}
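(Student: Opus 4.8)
The plan is to show that the volume hypothesis forces $e^{\boldsymbol{\mu}}=h(1)$ to be as close to $1$ as one wishes, and then to invoke the sharp entropy gap for non-flat shrinkers recalled in the introduction (\cite{LW}): a non-flat Ricci shrinker satisfies $\boldsymbol{\mu}\le-\varepsilon_0(n)<0$. Recall from Proposition \ref{p1.4} that $h$ attains its maximum at $\tau=1$, that $h(1)=e^{\boldsymbol{\mu}}$ by (\ref{1.4}), and that $h$ is non-increasing on $[1,\infty)$. Hence it suffices to produce a single scale $\tau_0\ge1$ with $h(\tau_0)\ge1-\delta(n,\varepsilon)$, where $\delta(n,\varepsilon)\to0$ as $\varepsilon\to0$: then $e^{\boldsymbol{\mu}}=h(1)\ge h(\tau_0)\ge1-\delta(n,\varepsilon)$, and choosing $\varepsilon=\varepsilon(n)$ small enough that $1-\delta(n,\varepsilon)>e^{-\varepsilon_0(n)}$ contradicts non-flatness, forcing $(M^n,g,f,p)$ to be flat.

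First I would pass from the ball to the sublevel sets of $\rho$. By the upper estimate in Lemma \ref{l2.1}, $B_r(p)\subseteq D(r+\sqrt{2n})$, so with $R=1/\varepsilon$ and $S:=R+\sqrt{2n}$ the hypothesis gives
\[ V(S)\ \ge\ |B_R(p)|\ \ge\ (1-\varepsilon)\,\omega_nR^n\ =\ (1-\varepsilon)\Big(1-\tfrac{\sqrt{2n}}{S}\Big)^{n}\omega_nS^n\ \ge\ (1-\eta)\,\omega_nS^n, \]
with $\eta=\eta(n,\varepsilon)\to0$ as $\varepsilon\to0$ (note $S\to\infty$). Next I would propagate this to all intermediate scales via the Cao--Zhou monotonicity: by (\ref{1.7}) the quantity $P(r)=V(r)r^{-n}-4X(r)r^{-n-2}$ is non-increasing on $[\sqrt{2(n+2)},\infty)$, so for $\sqrt{2(n+2)}\le t\le S$, using also $X\ge0$ and (\ref{1.6}),
\[ \frac{V(t)}{t^n}\ \ge\ P(t)\ \ge\ P(S)\ \ge\ \frac{V(S)}{S^n}\Big(1-\frac{2n}{S^2}\Big)\ \ge\ (1-\eta_1)\,\omega_n, \]
where $\eta_1=\eta_1(n,\varepsilon)\to0$; hence $V(t)\ge(1-\eta_1)\omega_nt^n$ throughout $[\sqrt{2(n+2)},S]$.

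Then I would feed this into $h$. Writing $e^{-f/\tau}=e^{-\rho^2/(4\tau)}$ and applying the co-area formula as in the proof of Proposition \ref{p1.4}, $h(\tau)=\int_0^{\infty}(4\pi\tau)^{-n/2}e^{-t^2/(4\tau)}V'(t)\,dt$; an integration by parts (the boundary terms vanish since $V(0)=0$ and $V(t)=O(t^n)$ by Lemma \ref{l1.1}) yields $h(\tau)=\int_0^{\infty}(4\pi\tau)^{-n/2}\frac{t}{2\tau}e^{-t^2/(4\tau)}V(t)\,dt$. Discarding the part of this (non-negative) integrand outside $[\sqrt{2(n+2)},S]$, inserting the lower bound for $V(t)$ from the previous step, and substituting $s=t^2/(4\tau)$, one gets
\[ h(\tau)\ \ge\ (1-\eta_1)\,\omega_n\,\pi^{-n/2}\int_{(n+2)/(2\tau)}^{\,S^2/(4\tau)}s^{n/2}e^{-s}\,ds . \]
Choosing $\tau_0=S$, the two limits of integration tend to $0$ and $+\infty$ respectively as $\varepsilon\to0$, and since $\omega_n\pi^{-n/2}\int_0^{\infty}s^{n/2}e^{-s}\,ds=1$ (the normalization identity already used in the proof of Proposition \ref{p1.4}), this forces $h(S)\ge1-\delta(n,\varepsilon)$ with $\delta(n,\varepsilon)\to0$, which completes the plan outlined above.

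The step I expect to be the main obstacle is the propagation in the second paragraph: a priori the hypothesis controls only the single ball $B_{1/\varepsilon}(p)$, and it is precisely the monotonicity of $P(r)$ that upgrades this to a near-Euclidean lower bound $V(t)\gtrsim\omega_nt^n$ at every scale $t\le S$ — exactly what is needed so that the Gaussian-weighted quantity $h(S)$, whose total mass against the Euclidean volume profile equals $1$, recaptures almost all of that mass. Everything else is an elementary Laplace-type estimate of an incomplete Gamma integral; the one thing to verify along the way is that the peak of the weight $\frac{t}{2\tau}e^{-t^2/(4\tau)}$ at $\tau=S$ (located near $t\sim\sqrt{S}$) indeed lies inside the good range $[\sqrt{2(n+2)},S]$, which holds once $S$ is large, i.e. once $\varepsilon$ is small.
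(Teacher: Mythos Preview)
Your argument is correct, and it takes a genuinely different and more elementary route than the paper's proof. The paper argues by contradiction along a sequence $(M_i,g_i,f_i,p_i)$: from the volume hypothesis it first gets a uniform lower bound on $\boldsymbol{\mu}_i$ via \cite[Lemma~2]{LW}, then invokes the deep integral curvature estimate \cite[Corollary~6.24]{LW2} to bound $\int_{B_r(p)}R_i^{2-\varepsilon}$ and hence $X_i(s)\le c_4 s^{n-1/2}$, and finally uses the monotonicity of $P(r)$ to control the \emph{tail} $\int_i^{\infty}P_i'(s)\,ds$ and conclude $\boldsymbol A_i\ge 1-\varepsilon(n)$; Theorem~\ref{t1.6} then forces flatness. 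You bypass the passage through $\boldsymbol A$ altogether: the same monotonicity of $P(r)$ is used only on the \emph{finite} interval $[\sqrt{2(n+2)},S]$ to propagate the near-Euclidean volume lower bound down to all intermediate scales, which is then fed directly into $h(\tau)$ at the single finite scale $\tau_0=S$; Proposition~\ref{p1.4} gives $e^{\boldsymbol{\mu}}=h(1)\ge h(S)$, and the $\boldsymbol{\mu}$-gap from \cite{LW} finishes. The gain of your approach is that it avoids \cite[Corollary~6.24]{LW2} entirely and needs no tail estimate on $X$; the paper's approach, in exchange for that extra input, actually shows that the hypothesis forces $\boldsymbol A$ close to~$1$, which is a slightly stronger intermediate conclusion.
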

\begin{proof}
If the above statement  were false, then there exists a sequence of non-flat Ricci shrinkers $(M_i,g_i,f_i,p_i)$ such that 
\begin{equation}\label{1.20}
\frac{\left| B_i(p_i)\right|_{g_i}}{\omega_ni^n}\geq 1-\frac{1}{i}.
\end{equation}
First of all, by the volume estimate of \cite[Lemma 2]{LW} and the assumption (\ref{1.20}), if $i\geq2\sqrt{n}$, then for some $c_1=c_1(n)>0$, we have 
\begin{equation*}
\boldsymbol{\mu}_i=\boldsymbol{\mu}(g_i)\geq -c_1.
\end{equation*}
Therefore, by the curvature estimate of Li-Wang\cite[Corollary 6.24]{LW2}, for some $c_2=c_2(c_1,n,\varepsilon)$ where $\varepsilon\in(0,1)$, 
\begin{equation*}
\int_{B_r(p)}R_i^{2-\varepsilon}dV_i\leq c_2r^{n-2+2\varepsilon},
\end{equation*}
then by \cite[Lemma 2]{LW} again, for some $c_3=c_3(c_2,n)$,
\begin{equation}\label{1.22}
\int_{B_r(p)}R_i\leq \left( \int_{B_r(p)}R_i^{2-\varepsilon}\right)^{\frac{1}{2-\varepsilon}}\left|B_r(p)\right|_{g_i}^{\frac{1-\varepsilon}{2-\varepsilon}}\leq c_3r^{\frac{n-2+2\varepsilon}{2-\varepsilon}}r^{\frac{(1-\varepsilon)n}{2-\varepsilon}}=c_3r^{n+1-\frac{4-3\varepsilon}{2-\varepsilon}}.
\end{equation}
By Lemma \ref{l1.3}, if $s\geq \sqrt{2(n+2)}$, then 
$$P_i'(s)=\frac{2X_i(s)}{s^{n+1}}\left\lbrace\frac{2(n+2)}{s^2}-1\right\rbrace\leq0.$$
According to Lemma \ref{l2.1}, $\left|B_s(p)\right|_{g_i}$ and $V_i(s)$ are almost the same, then by (\ref{1.22}), for some $c_4=c_4(c_3,n)$ and $\varepsilon=\frac{2}{3}$, we have
\begin{equation}\label{1.24}
\frac{X_i(s)}{s^{n+1}}\leq c_4s^{-\frac{3}{2}}.
\end{equation}
Now for $i\geq c_5$ where $c_5=c_5(n)$ is sufficiently large, by the proof of Lemma \ref{l1.3} and (\ref{1.24}), 
\begin{align}
\boldsymbol{A}_i\omega_n-P_i(i)&=\int_{i}^{+\infty}\frac{2X_i(s)}{s^{n+1}}\left\lbrace\frac{2(n+2)}{s^2}-1\right\rbrace ds\nonumber\\
&\geq-c_4\int_{i}^{+\infty}s^{-\frac{3}{2}}ds\nonumber\\
&=-\frac{2c_4}{\sqrt{i}}.\label{1.25}
\end{align}
Due to the definition of $P(r)$ and (\ref{1.25}), 
\begin{equation}\label{1.26}
	\boldsymbol{A}_i\geq \frac{V_i(i)}{\omega_ni^n}-\frac{4X_i(i)}{\omega_ni^{n+2}}-\frac{2c_4}{\sqrt{i}}.
\end{equation}
However, if $i=i(n)$ is sufficiently large, then (\ref{1.20}) and (\ref{1.26}) implies $A_i\geq 1-\varepsilon(n)$, but by Theorem \ref{t1.6}, this forces $(M_i,g_i,f_i,p_i)$ must be flat and contradicts our assumption at the beginning.
\end{proof}

\section{Rigidity and non-integrability of scalar curvature}
On any Ricci shrinker $(M^n,g,f,p)$, there holds the following Sobolev inequality which is fundamental for further analysis.
\begin{lemma}[\cite{LW}]\label{l2.2}
	For each compactly supported locally Lipschitz function $u$, 
	\begin{equation}\label{2.3}
		\left( \int_{M^n} u^{\frac{2n}{n-2}}\right)^{\frac{n-2}{n}}\leq Ce^{-\frac{2\boldsymbol{\mu}}{n}} \int_{M^n}\left( 4\left| \nabla u\right|^2+Ru^2\right) 
	\end{equation}
	for some dimensional constant $C = C(n)$.
\end{lemma}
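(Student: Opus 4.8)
The plan is to deduce the inequality from Perelman's $\boldsymbol{\mu}$-functional, which on a shrinker is pinned down by the normalization (\ref{1.3})--(\ref{1.4}). Recall that for a Ricci shrinker the infimum of $\boldsymbol{\mu}(g,\tau)$ over $\tau>0$ is attained at $\tau=1$ with value $\boldsymbol{\mu}$ (the function $f+\boldsymbol{\mu}$ being a minimizer of $\mathcal{W}(g,\cdot,1)$), so that $\boldsymbol{\mu}(g,\tau)\geq\boldsymbol{\mu}$ for every $\tau>0$. Unravelling $\mathcal{W}(g,\phi,\tau)\geq\boldsymbol{\mu}$ under the substitution $u^2=(4\pi\tau)^{-n/2}e^{-\phi}$ --- which turns the constraint into $\int_M u^2\,dV=1$ and sends $\tau(|\nabla\phi|^2+R)$ to $4\tau|\nabla u|^2+\tau Ru^2$ --- I would obtain the scale-family of logarithmic Sobolev inequalities
\begin{equation*}
\tau\int_M\bigl(4|\nabla u|^2+Ru^2\bigr)\,dV\ \geq\ \int_M u^2\log u^2\,dV+\tfrac n2\log(4\pi\tau)+n+\boldsymbol{\mu},
\end{equation*}
valid for all $\tau>0$ and all compactly supported Lipschitz $u\geq0$ with $\int_M u^2\,dV=1$ (for general such $u$ apply this to $u_\delta=u+\delta\eta$ with $\eta$ a fixed positive bump and let $\delta\to0$, using $|\nabla|u||\leq|\nabla u|$; both sides of (\ref{2.3}) are $2$-homogeneous in $u$, so the $L^2$-normalization costs nothing).

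Then, since the displayed inequality holds for \emph{every} $\tau>0$, I would minimize its right-hand side in $\tau$ --- the optimal choice being $\tau=\tfrac n2\bigl(\int_M(4|\nabla u|^2+Ru^2)\,dV\bigr)^{-1}$ --- thereby removing the scale parameter and reaching the dimensional (sharp-exponent) logarithmic Sobolev inequality
\begin{equation*}
\int_M u^2\log u^2\,dV\ \leq\ \frac n2\log\!\Bigl(C(n)\,e^{-2\boldsymbol{\mu}/n}\int_M\bigl(4|\nabla u|^2+Ru^2\bigr)\,dV\Bigr),\qquad \int_M u^2\,dV=1,
\end{equation*}
where gathering the constants $n$, $\tfrac n2\log(4\pi)$, $\boldsymbol{\mu}$ together with the optimization value $\tfrac n2\bigl(1-\log\tfrac n2\bigr)$ produces precisely a factor of the form $C(n)e^{-2\boldsymbol{\mu}/n}$.

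Finally, I would invoke the classical self-improvement of a dimensional logarithmic Sobolev inequality into the Sobolev inequality of the same homogeneity (Bakry--Coulhon--Ledoux--Saloff-Coste): applied to the symmetric sub-Markovian semigroup generated by $L=4\Delta-R$ --- which is sub-Markovian precisely because $R\geq0$, so that no curvature bound is needed --- the above log-Sobolev inequality yields the on-diagonal heat-kernel bound $\|e^{tL}\|_{L^1\to L^\infty}\leq C(n)\bigl(e^{-2\boldsymbol{\mu}/n}t^{-1}\bigr)^{n/2}$ via Gross's theorem and Davies' method, and this ultracontractive estimate is equivalent, by Varopoulos' theorem, to (\ref{2.3}) with a constant $C(n)e^{-2\boldsymbol{\mu}/n}$. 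I expect this last equivalence to be the only genuine difficulty: the passage from the $\mathcal{W}$-functional down to the dimensional log-Sobolev inequality is essentially bookkeeping, whereas turning the latter into a Sobolev inequality with a purely dimensional constant requires the standard but non-trivial iteration/ultracontractivity argument, and some care that every auxiliary constant ends up depending on $n$ alone once the factor $e^{-2\boldsymbol{\mu}/n}$ has been extracted.
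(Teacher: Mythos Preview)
The paper does not prove this lemma; it is quoted verbatim from \cite{LW} and used as a black box. Your outline is the standard derivation and is essentially the one carried out in that reference (and earlier in the Ricci-flow literature by Ye, Zhang, and others): first exploit $\boldsymbol{\mu}(g,\tau)\geq\boldsymbol{\mu}$ for all $\tau>0$ on a shrinker to obtain the $\tau$-family of log-Sobolev inequalities, then optimize in $\tau$ to reach the scale-free log-Sobolev inequality with constant $C(n)e^{-2\boldsymbol{\mu}/n}$, and finally upgrade to the $L^{2n/(n-2)}$ Sobolev inequality via the Davies/Varopoulos ultracontractivity machinery applied to the Schr\"odinger semigroup of $4\Delta-R$ (sub-Markovian thanks to $R\geq0$). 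All three steps are correct as you describe them, including the bookkeeping of constants and the handling of the normalization $\int u^2=1$; the only substantive analytic input is indeed the log-Sobolev $\Rightarrow$ Sobolev step, which is classical. So your proposal matches the argument behind the cited result.
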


As for the scalar curvature of a Ricci shrinker, most generally, we have the following fundamental estimate.
\begin{lemma}[\cite{CLY1}]\label{l2.3}
	For any non-compact and non-flat $(M^n,g,f,p)$, there exists a constant $C>0$ such that if $r\geq C$, then
	\begin{equation}\label{2.4}
		R(x)\geq \frac{1}{Cd^2 }\left( or\ equivalently\ \frac{1}{Cf}\right).
	\end{equation}
\end{lemma}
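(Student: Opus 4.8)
Proof proposal for Lemma~\ref{l2.3} (the scalar curvature lower bound $R \geq \frac{1}{Cd^2}$).

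The plan is to use the soliton structure to show that along a minimal geodesic ray emanating from $p$, the scalar curvature cannot decay faster than $1/d^2$, and then propagate this to a full lower bound. First I would recall the two basic soliton identities that are available from the excerpt: $R + |\nabla f|^2 = f$ (equation~\eqref{1.3}) and $R + \Delta f = \frac{n}{2}$ (equation~\eqref{1.2}), together with the well-known evolution equation for $R$ along the soliton, namely $\Delta_f R = R - 2|Ric|^2$ where $\Delta_f = \Delta - \langle \nabla f, \nabla \cdot\rangle$; this is a standard consequence of the contracted second Bianchi identity applied to \eqref{1.1} and may be quoted. The Cao--Zhou estimate (our Lemma~\ref{l2.1}) gives $f \sim \frac{1}{4}d^2(\cdot,p)$ at infinity, so proving $R \gtrsim 1/d^2$ is equivalent to proving $R \gtrsim 1/f$.

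The key step is an ODE/maximum-principle argument. Consider the function $u = R^{-1}$ (well defined since $R > 0$ on a non-flat shrinker, as noted in the introduction after \eqref{1.4}). From $\Delta_f R = R - 2|Ric|^2$ one computes $\Delta_f u = -R^{-2}\Delta_f R + 2R^{-3}|\nabla R|^2 = -u + 2R^{-2}|Ric|^2 + 2u^{-1}|\nabla u|^2 \cdot u^2 \cdot \ldots$ — the precise manipulation aside, the point is that using the Cauchy--Schwarz bound $|Ric|^2 \geq \frac{R^2}{n}$ one gets a differential inequality of the form $\Delta_f u \geq \frac{2}{n} - u$ (up to harmless lower-order terms), so that $v = u - \frac{2}{n}$ is a subsolution: $\Delta_f v \geq -v$ roughly, or more usefully $\Delta_f(e^{t}(\cdots))$ has a sign. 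Combined with the fact that $f$ grows quadratically and $R \leq f$, one feeds this into a comparison argument against the barrier $C f$ on the region $\{f \geq C\}$: on the boundary $\{f = C\}$ the quantity $R \cdot f$ is bounded below by compactness (here non-flatness and $R>0$ give a positive lower bound on the compact set $\{f \le C\}$), and the maximum principle for $\Delta_f$ on the (possibly noncompact) superlevel set — valid because $e^{-f}$ is integrable and $f \to \infty$, so $\Delta_f$ admits the weighted maximum principle — then forces $Rf \geq 1/C$ everywhere outside a compact set.

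Alternatively, and perhaps more robustly, I would run the argument along minimizing geodesics: fix a unit-speed minimal geodesic $\gamma$ from $p$ to infinity, restrict $R$ to $\gamma$, use the soliton equations to show $\frac{d}{ds}\big(R(\gamma(s))\cdot s^2\big)$ cannot be too negative (exploiting $|\nabla R| \le 2|Ric||\nabla f|$ and Shi-type or soliton gradient estimates for $R$, e.g. $|\nabla R| \leq C R^{3/2}$ or $|\nabla \sqrt R| \le C$ which follow from the trace identity and $|Ric|\le$ (bounds in terms of $R$) in the regions of interest), integrate, and conclude $R(\gamma(s)) \ge c/s^2$ for $s$ large; since every point far from $p$ lies on such a geodesic, this gives the claim. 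The main obstacle I anticipate is the absence of an a priori curvature bound on a general shrinker — $|Ric|$ need not be controlled by $R$ pointwise — so the clean gradient estimate on $R$ is not automatic; the resolution is to work with $\Delta_f R = R - 2|Ric|^2$ directly via the weighted maximum principle rather than with pointwise gradient bounds, using only the one-sided inequality $|Ric|^2 \ge R^2/n$, which is all that is needed for the lower barrier and which sidesteps the missing upper bounds entirely.
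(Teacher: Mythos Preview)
The paper does not prove this lemma at all; it is quoted verbatim from Chow--Lu--Yang \cite{CLY1} and used as a black box. So there is no in-paper argument to compare your sketch against.

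That said, your proposal has a genuine gap in the maximum-principle step. You correctly derive (dropping the nonnegative gradient term)
\[
\Delta_f u \;\geq\; \tfrac{2}{n} - u, \qquad u = R^{-1},
\]
but this inequality is in the wrong direction for bounding $u$ from above. Rewritten as $(\Delta_f + 1)u \geq \tfrac{2}{n}$, the zeroth-order coefficient of the operator has the bad sign and the maximum principle does not apply. Concretely, with your proposed barrier $v = u - Cf$ one finds $\Delta_f v \geq -v + (\tfrac{2}{n} - \tfrac{Cn}{2})$, and at a (possibly Omori--Yau) positive maximum of $v$ this yields only the \emph{lower} bound $v \geq \tfrac{2}{n} - \tfrac{Cn}{2}$, not $v \leq 0$. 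Your fallback geodesic argument, as you yourself note, requires a gradient estimate of the form $|\nabla R| \lesssim R^{3/2}$ that is unavailable without an a priori curvature bound, so it does not rescue the situation.

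The repair is to run the comparison on $R$ itself rather than on $R^{-1}$. From $|Ric|^2 \geq R^2/n$ one has $\Delta_f R \leq R - \tfrac{2}{n}R^2$, so $R$ is a \emph{supersolution} of the semilinear equation $\Delta_f v = v - \tfrac{2}{n}v^2$. One then checks that a function of the form $\psi = A/(B+f)$ is a \emph{subsolution} of the same equation for suitable $A,B$ depending only on $n$, and that $B$ can be chosen (using $R>0$ on the compact set $\{f \leq f_0\}$, which is where non-flatness enters) so that $R \geq \psi$ there. Setting $w = R - \psi$ gives $\Delta_f w \leq w\bigl(1 - \tfrac{2}{n}(R+\psi)\bigr)$; on the set $\{w<0\}$ one has $R<\psi$, hence $R+\psi$ is small and the bracket is positive, so the weighted (Omori--Yau) maximum principle --- valid here since $f$ is a proper exhaustion with $|\nabla f|^2 \leq f$ and $\Delta_f f = \tfrac{n}{2}-f$ --- rules out a negative infimum of $w$. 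This yields $R \geq A/(B+f) \gtrsim 1/f$. The structure of the actual proof in \cite{CLY1} is along these lines; your one-sided use of $|Ric|^2 \geq R^2/n$ is exactly the right ingredient, but it must be fed into the comparison for $R$, not for $1/R$.
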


With the help of Lemma \ref{l2.2} and \ref{l2.3}, we have
\begin{proposition}\label{p3.1}
	For any non-compact and non-flat $(M^n,g,f,p)$, there exist constants $C_1,C_2>0$ which do not depend on $r$ such that if $r\geq C_1$, then
	\begin{equation}\label{3.1}
		\int_{B_{2r}}R\geq C_2\left| B_r\right|^{\frac{n-2}{n}}.
	\end{equation}
	Hence $\int_MR=+\infty$.
\end{proposition}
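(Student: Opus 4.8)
The plan is to apply the Sobolev inequality of Lemma \ref{l2.2} with a suitable cutoff function and then invoke the scalar curvature lower bound of Lemma \ref{l2.3} to absorb the error terms. First I would fix a large radius $r$ and choose a Lipschitz cutoff $u$ that equals $1$ on $B_r(p)$, is supported in $B_{2r}(p)$, and satisfies $|\nabla u|\leq 2/r$. Plugging this into (\ref{2.3}) gives
$$\left|B_r\right|^{\frac{n-2}{n}}\leq\left(\int_{M^n}u^{\frac{2n}{n-2}}\right)^{\frac{n-2}{n}}\leq Ce^{-\frac{2\boldsymbol{\mu}}{n}}\int_{M^n}\left(4|\nabla u|^2+Ru^2\right)\leq Ce^{-\frac{2\boldsymbol{\mu}}{n}}\left(\frac{16}{r^2}\left|B_{2r}\right|+\int_{B_{2r}}R\right).$$
So it remains to bound the term $r^{-2}|B_{2r}|$ by a multiple of $\int_{B_{2r}}R$.

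For this I would use Lemma \ref{l2.3}: for $r$ large enough, on the annulus $B_{2r}(p)\setminus B_{r}(p)$ we have $d(x,p)\geq r\geq C$, hence $R(x)\geq \frac{1}{C d^2(x,p)}\geq \frac{1}{4Cr^2}$. Therefore
$$\int_{B_{2r}}R\;\geq\;\int_{B_{2r}\setminus B_r}R\;\geq\;\frac{1}{4Cr^2}\left(\left|B_{2r}\right|-\left|B_r\right|\right).$$
Combining this with the volume comparison-type lower bound on volume growth — that the volume of Ricci shrinkers grows at least linearly, or more precisely that $|B_{2r}|\geq (1+c)|B_r|$ fails only in degenerate cases — I can arrange $|B_{2r}|-|B_r|\geq \tfrac12|B_{2r}|$ for $r$ large, so that $r^{-2}|B_{2r}|\leq 8C\int_{B_{2r}}R$. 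Feeding this back into the Sobolev estimate yields $|B_r|^{\frac{n-2}{n}}\leq C_2^{-1}\int_{B_{2r}}R$ for suitable $C_2=C_2(n,\boldsymbol{\mu})>0$ and all $r\geq C_1$, which is (\ref{3.1}).

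Finally, to deduce $\int_M R=+\infty$: if $\int_M R<\infty$ then $\int_{B_{2r}}R$ stays bounded, so by (\ref{3.1}) the volumes $|B_r|$ are bounded, forcing $\textbf{AVR}=0$; but a non-flat non-compact Ricci shrinker cannot have both bounded total scalar curvature and bounded volume, because Lemma \ref{l2.3} gives $R\gtrsim f^{-1}\gtrsim d^{-2}$, and integrating $d^{-2}$ over a region of bounded volume that exhausts $M$ is consistent only if the volume is finite — yet a complete non-compact shrinker has infinite volume (its volume grows at least linearly). This contradiction completes the proof. The main obstacle I anticipate is the step controlling $r^{-2}|B_{2r}|$ by $\int_{B_{2r}}R$: one needs a genuine doubling-type gap $|B_{2r}|\geq(1+c)|B_r|$ valid for all large $r$, which must be extracted from the at-least-linear volume growth of shrinkers rather than taken for granted; handling the borderline case where volume growth is exactly linear (so the AVR is zero) requires a little care, but there Lemma \ref{l2.3} still forces $\int_{B_{2r}}R\to\infty$ directly since $\int_{B_{2r}}R\gtrsim r^{-2}(|B_{2r}|-|B_r|)$ and linear growth of volume makes the right side comparable to $r^{-1}\to$ a positive rate summing to $\infty$.
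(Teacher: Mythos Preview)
Your overall strategy---Sobolev inequality with a cutoff plus the pointwise lower bound on $R$ from Lemma~\ref{l2.3}---is exactly the paper's approach, and your Sobolev step is identical to the paper's. The gap is precisely the one you flag yourself: you need $|B_{2r}|-|B_r|\geq \tfrac12|B_{2r}|$, i.e.\ a doubling bound $|B_{2r}|\geq 2|B_r|$ for all large $r$, and at-least-linear volume growth does \emph{not} yield this. Linear growth only says $|B_r|\to\infty$; it says nothing about the ratio $|B_{2r}|/|B_r|$ along the way, and there is no a priori reason this ratio cannot hover arbitrarily close to $1$ along a subsequence.

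The paper sidesteps this entirely by changing the annulus: instead of $B_{2r}\setminus B_r$ with a moving inner radius, it uses $B_r\setminus B_{C_0}$ with a \emph{fixed} inner radius $C_0$ (the constant from Lemma~\ref{l2.3}). On that region $R\geq \frac{1}{C_0 r^2}$ still holds, and now one only needs $|B_r|-|B_{C_0}|\geq \tfrac12|B_r|$, i.e.\ $|B_r|\geq 2|B_{C_0}|$, which is immediate from $|B_r|\to\infty$. This gives $\int_{B_{2r}}R\geq \frac{|B_{2r}|}{8C_0 r^2}$ directly, and the rest of your argument goes through. So the fix is a one-line change in the choice of annulus; once you make it, your proof coincides with the paper's. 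Your deduction of $\int_M R=+\infty$ is also more elaborate than needed: once (\ref{3.1}) holds, linear volume growth gives $|B_r|^{(n-2)/n}\to\infty$, hence $\int_{B_{2r}}R\to\infty$.
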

\begin{proof}
	By lemma \ref{l2.3}, there exists a constant $C_0$ such that if $r\geq C_0$, then we have
	\begin{equation}\label{3.2}
		R\geq \frac{1}{C_0r^2}.
	\end{equation}
	In view of \cite[Proposition 6]{LW}, there exists a constant $C'_{0}=C'_0\left( n,\boldsymbol{\mu}\right) $ such that if $r\geq C'_0$, then
	\begin{equation*}
		\left| B_r\right|\geq 2\left| B_{C_0}\right|.
	\end{equation*}
	Consequently, for $r\geq \max\left\lbrace C_0, C'_0\right\rbrace $, 
	\begin{align*}
		\int_{B_r}R\geq\int_{B_r\setminus B_{C_0}}R\geq \int_{B_r\setminus B_{C_0}}\frac{1}{C_0r^2}\geq  \frac{\left| B_r\right|}{2C_0r^2}.
	\end{align*}
	Hence
	\begin{align}\label{3.3}
		\int_{B_{2r}}R\geq \frac{\left| B_{2r}\right|}{2C_0(2r)^2}.
	\end{align}
	On the other hand, by Li-Wang's Sobolev inequality (\ref{2.3}), for some constant $C(n,\boldsymbol{\mu})$, there holds
	\begin{equation}\label{3.4}
		\left( \int_{M^n} u^{\frac{2n}{n-2}}\right)^{\frac{n-2}{n}}\leq C(n,\boldsymbol{\mu} ) \int_{M^n}\left(\left| \nabla u\right|^2+Ru^2\right) 
	\end{equation}
	for any compactly supported locally Lipschitz function $u$.
	
	Let $u(x)=u\left(r(x) \right)$ be a cut-off function such that
	\begin{equation*}
		u=
		\left\{
		\begin{array}{lr}
			1 , \quad x\in B_r,&\\
			0 , \quad x\in M\setminus B_{2r};&
		\end{array}
		\right.	 and \quad \left| \nabla u\right|^2\leq \frac{4}{r^2}.
	\end{equation*}
	Then by (\ref{3.4}), we have
	\begin{align*}
		\left| B_r\right|^{\frac{n-2}{n}}\leq C(n,\boldsymbol{\mu})\left( \frac{\left| B_{2r}\setminus B_r\right|}{(2r)^2}+\int_{B_{2r}}R\right)\leq C(n,\boldsymbol{\mu})\left( \frac{\left| B_{2r}\right|}{(2r)^2}+\int_{B_{2r}}R\right),
	\end{align*}
	therefore
	\begin{equation}\label{3.5}
		\frac{\int_{B_{2r}}R}{2C_0}\geq \frac{\left| B_r\right|^{\frac{n-2}{n}}}{2C_0C(n,\boldsymbol{\mu})}-\frac{\left| B_{2r}\right|}{2C_0(2r)^2}.
	\end{equation}
	Combining (\ref{3.3}) with (\ref{3.5}) yields
	\begin{equation}\label{3.6}
		\int_{B_{2r}}R\geq\frac{\frac{\left| B_r\right|^{\frac{n-2}{n}}}{2C_0C(n,\boldsymbol{\mu})}}{1+\frac{1}{2C_0}}.
	\end{equation}
	Hence we obtain (\ref{3.1}).
\end{proof}
As a consequence of the above result, we have

\begin{corollary}[\textbf{gap property}]\label{c3.2}
	For $(M^n,g,f,p)\in\mathcal{M}(\boldsymbol{\mu}_0)$, there exists a constant $\delta=\delta(n,\boldsymbol{\mu}_0)>0$ such that if
	$$\int_{M^n}R\leq \delta,$$
	then $(M^n,g,f,p)$ must be flat.
\end{corollary}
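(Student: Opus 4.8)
The plan is to reduce the statement to a uniform positive lower bound for $\int_{M^n}R\,dV$ that holds on every non-flat Ricci shrinker in $\mathcal{M}(\boldsymbol{\mu}_0)$, and then take $\delta$ strictly below that bound; accordingly I would treat the non-compact and the compact cases separately.

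In the non-compact case Proposition \ref{p3.1} settles the matter immediately: it already gives $\int_{M^n}R\,dV=+\infty$ for every non-flat non-compact Ricci shrinker, so the hypothesis $\int_{M^n}R\,dV\leq\delta$ cannot hold there for any finite $\delta>0$, and $\boldsymbol{\mu}_0$ plays no role.

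In the compact case I would argue directly from the soliton equations, since Proposition \ref{p3.1} is stated only in the non-compact setting. Integrating the traced equation (\ref{1.2}) over the closed manifold and using $\int_{M^n}\Delta f\,dV=0$ yields $\int_{M^n}R\,dV=\tfrac{n}{2}\,\mathrm{Vol}(M^n)$. On the other hand, (\ref{1.3}) together with $R\geq0$ gives $f=R+|\nabla f|^2\geq0$, hence $e^{-f}\leq1$, and feeding this into the normalization (\ref{1.4}) gives $e^{\boldsymbol{\mu}}=\int_{M^n}(4\pi)^{-n/2}e^{-f}\,dV\leq(4\pi)^{-n/2}\,\mathrm{Vol}(M^n)$, i.e. $\mathrm{Vol}(M^n)\geq(4\pi)^{n/2}e^{\boldsymbol{\mu}}\geq(4\pi)^{n/2}e^{\boldsymbol{\mu}_0}$. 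Combining these two relations, $\int_{M^n}R\,dV\geq\tfrac{n}{2}(4\pi)^{n/2}e^{\boldsymbol{\mu}_0}$ for every compact shrinker in $\mathcal{M}(\boldsymbol{\mu}_0)$. (There is no compact flat shrinker anyway: flatness forces $\Delta f\equiv n/2$, contradicting $\int_{M^n}\Delta f\,dV=0$.)

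Hence it suffices to choose, e.g., $\delta=\delta(n,\boldsymbol{\mu}_0):=\tfrac{n}{4}(4\pi)^{n/2}e^{\boldsymbol{\mu}_0}>0$: the inequality $\int_{M^n}R\,dV\leq\delta$ is then impossible on a non-flat shrinker, whether compact or not, so any $(M^n,g,f,p)\in\mathcal{M}(\boldsymbol{\mu}_0)$ satisfying it must be flat. I do not expect a genuine obstacle; the only points needing care are that Proposition \ref{p3.1} covers only the non-compact case (so the compact case must use the separate elementary computation above) and that the normalization conventions of the excerpt indeed make $f\geq0$ and $\int_{M^n}\Delta f\,dV=0$ legitimate.
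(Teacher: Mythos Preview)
Your proof is correct and follows the same overall skeleton as the paper: reduce to Proposition~\ref{p3.1} in the non-compact case, and in the compact case show $\int_{M^n}R=\tfrac{n}{2}\,|M^n|$ is bounded below by a positive constant depending only on $n$ and $\boldsymbol{\mu}_0$.

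The difference lies in how the volume lower bound for compact shrinkers is obtained. The paper quotes the universal diameter lower bound of Futaki--Sano \cite{FS} to ensure a unit ball $B_1(p)$ sits inside $M$, and then invokes a volume estimate from \cite{LW2} to get $|B_1(p)|\geq e^{-e^{n+8}}e^{\boldsymbol{\mu}}$. Your route is more self-contained: from $f=R+|\nabla f|^2\geq0$ and the normalization~(\ref{1.4}) you read off directly $|M^n|\geq(4\pi)^{n/2}e^{\boldsymbol{\mu}}$, bypassing both external inputs. This yields a cleaner constant and uses nothing beyond the soliton identities and the normalization already fixed in the introduction; the paper's argument, by contrast, would transfer verbatim to settings where only a local volume lower bound (rather than the integral normalization) is available.
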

\begin{proof}
	By Proposition \ref{p3.1}, we only need to show $(M^n,g,f,p)$ must be non-compact if $\delta$ is small.  First of all, by Theorem 1.2 of \cite{FS}, the diameter $d(M)$ of any compact shrinkers $(M^n,g,f,p)$ has a universal lower bound, i.e. there exists a universal constant $D>0$ such that $d(M)\geq D$. Especially, we can choose $D=2$, i.e. $d(M)\geq2$.
	
	Suppose $(M^n,g,f,p)$ is compact, by (\ref{1.3}) and Lemma 2.4 of \cite{LW2},  we have
	$$\int_{M^n}R=\frac{n}{2}\left| M\right|\geq\frac{n}{2}\left| B_1(p)\right|\geq e^{-e^{n+8}}e^{\boldsymbol{\mu}}.$$
	Hence  choosing a $\delta<e^{-e^{n+8}}e^{\boldsymbol{\mu}_0}$ finishes the proof Corollary \ref{c3.2}.
\end{proof}

In the critical case $k=\frac{n}{2}$, generally, it's hard to estimate the growth rate of $\int_{D(r)}R^{\frac{n}{2}}$ with respect to $r$ like Proposition \ref{p3.1}. But we still have
\begin{proposition}\label{p3.3}
	For any non-compact and non-flat $(M^n,g,f,p)$, 
	$$\int_{M^n}R^{\frac{n}{2}}=+\infty.$$
\end{proposition}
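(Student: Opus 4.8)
I would argue by contradiction, supposing $\int_{M^n}R^{\frac n2}\,dV<+\infty$. It is worth noting first why the critical exponent $k=\frac n2$ requires a genuinely new input. For $1\le k<\frac n2$, combining \eqref{3.1} with H\"older's inequality already gives $C_2|B_r|^{\frac{n-2}{n}}\le\left(\int_{B_{2r}}R^{k}\right)^{\frac1k}|B_{2r}|^{1-\frac1k}$, and since $1-\frac1k<\frac{n-2}{n}$ when $k<\frac n2$, finiteness of the total integral would force a reverse‑doubling inequality for $|B_r|$, hence super‑polynomial volume growth, contradicting the polynomial bound implicit in Lemma \ref{l1.1}. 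At $k=\frac n2$ the two exponents coincide and this only yields ordinary volume doubling, which is no contradiction. The extra leverage will therefore be extracted from the Sobolev inequality of Lemma \ref{l2.2}.

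\textbf{Step 1 (a Euclidean Sobolev inequality on an end).} Since $\int_{M^n}R^{\frac n2}<+\infty$, fix $r_0$ large with $\int_{M^n\setminus B_{r_0}(p)}R^{\frac n2}\le\varepsilon_0$, where $\varepsilon_0=\varepsilon_0(n,\boldsymbol{\mu})$ is chosen in a moment. For any compactly supported Lipschitz $u$ with support in $M^n\setminus B_{r_0}(p)$, H\"older gives $\int Ru^2\le\left(\int_{M^n\setminus B_{r_0}(p)}R^{\frac n2}\right)^{\frac2n}\left(\int u^{\frac{2n}{n-2}}\right)^{\frac{n-2}{n}}$, so substituting into \eqref{2.3} and taking $\varepsilon_0$ small enough to absorb the curvature term into the left‑hand side yields
\[
\left(\int_{M^n}u^{\frac{2n}{n-2}}\,dV\right)^{\frac{n-2}{n}}\le C_1\int_{M^n}|\nabla u|^2\,dV,\qquad \operatorname{supp}u\subset M^n\setminus B_{r_0}(p),
\]
with $C_1=C_1(n,\boldsymbol{\mu})$. \textbf{Step 2 (Euclidean volume growth).} I claim this forces $\boldsymbol{A}>0$. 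Testing the inequality against a cut‑off equal to $1$ on $B_r(p)\setminus B_{2r_0}(p)$, vanishing on $B_{r_0}(p)$ and outside $B_{2r}(p)$, with $|\nabla u|\le Cr_0^{-1}$ on the inner collar and $|\nabla u|\le Cr^{-1}$ on the outer collar, gives
\[
\left(|B_r(p)|-|B_{2r_0}(p)|\right)^{\frac{n-2}{n}}\le C_2\left(\frac{|B_{2r_0}(p)|}{r_0^2}+\frac{|B_{2r}(p)|}{r^2}\right).
\]
Because the volume of a non‑compact Ricci shrinker grows at least linearly (\cite[Theorem 6.1]{MW1}, \cite[Proposition 6]{LW}), $|B_r(p)|\to+\infty$; hence for $r$ large the fixed first term on the right and the subtracted term on the left are negligible, leaving the self‑improving bound $r^2|B_r(p)|^{\frac{n-2}{n}}\le C_3|B_{2r}(p)|$. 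Evaluating at the dyadic radii $2^j$, the resulting recursion $a_{j+1}\ge\frac{n-2}{n}a_j+j\log 4-C$ for $a_j=\log|B_{2^j}(p)|$ has linear growth rate $n\log 2$, so $|B_{2^j}(p)|\gtrsim 2^{nj}$, i.e. $|B_r(p)|\ge c\,r^n$ for all large $r$; since $\boldsymbol{A}$ exists by Lemma \ref{l1.1}, this means $\boldsymbol{A}\ge c/\omega_n>0$. (Equivalently, one may first pass from the Step 1 inequality to an isoperimetric inequality on the end and integrate the ensuing differential inequality for $r\mapsto|B_r(p)|$.)

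\textbf{Step 3 (contradiction).} With $\boldsymbol{A}>0$ in hand, Lemma \ref{l2.3} provides $c_0>0$ and $r_1$ with $R\ge c_0 f^{-1}=4c_0\rho^{-2}$ on $M^n\setminus D(r_1)$. By the coarea formula (as in the proof of Proposition \ref{p1.4}), for $S>r_1$,
\[
\int_{M^n}R^{\frac n2}\,dV\ge(4c_0)^{\frac n2}\int_{r_1}^{S}s^{-n}V'(s)\,ds=(4c_0)^{\frac n2}\left(\frac{V(S)}{S^n}-\frac{V(r_1)}{r_1^n}+n\int_{r_1}^{S}s^{-n-1}V(s)\,ds\right).
\]
By the proof of Corollary \ref{c1.5}, $V(s)\ge\boldsymbol{A}\omega_n s^n$ for $s\ge\sqrt{2(n+2)}$, while $V(S)/S^n\to\boldsymbol{A}\omega_n$; letting $S\to+\infty$, the right‑hand side is bounded below by $(4c_0)^{\frac n2}\left(\boldsymbol{A}\omega_n-r_1^{-n}V(r_1)+n\boldsymbol{A}\omega_n\int_{r_1}^{+\infty}\frac{ds}{s}\right)=+\infty$, contradicting $\int_{M^n}R^{\frac n2}<+\infty$. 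This proves the proposition.

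\textbf{Main obstacle.} The crux is Step 2: upgrading the scale‑invariant smallness of $\int R^{\frac n2}$ at infinity into a genuine geometric conclusion (Euclidean volume growth) rather than the mere volume doubling that the H\"older argument produces. Once the end Sobolev inequality is available, combining it with the already‑established almost‑linear lower bound $R\gtrsim f^{-1}$ of Lemma \ref{l2.3} makes the rest routine.
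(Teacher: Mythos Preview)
Your proof is correct and shares the paper's two–step contradiction structure — first show that $\int_M R^{n/2}<\infty$ forces $\boldsymbol A>0$, then show that $\boldsymbol A>0$ together with the pointwise lower bound $R\gtrsim f^{-1}$ of Lemma~\ref{l2.3} makes $\int_M R^{n/2}$ diverge — but your execution of each step differs from the paper's. For the implication $\int_M R^{n/2}<\infty\Rightarrow\boldsymbol A>0$, the paper combines H\"older with the at-least-linear volume growth to get $\fint_{D(r)}R\lesssim r^{-2/n}$ and then invokes the Chow--Lu--Yang criterion of Lemma~\ref{l1.1}; you instead absorb the $Ru^2$ term in the Sobolev inequality \eqref{2.3} to obtain a pure Euclidean Sobolev inequality on the end and then bootstrap to $|B_r|\gtrsim r^n$ by a dyadic recursion, which is more self-contained and avoids Lemma~\ref{l1.1} entirely. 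For the divergence step, the paper goes through a somewhat circuitous lower bound on $\int\frac{|\partial D(s)|}{s^n}\,ds$ obtained from the soliton equation and Green's formula (Lemma~\ref{l3.4*}); your integration by parts $\int_{r_1}^{S} s^{-n}V'(s)\,ds=\frac{V(S)}{S^n}-\frac{V(r_1)}{r_1^n}+n\int_{r_1}^{S}s^{-n-1}V(s)\,ds$ together with $V(s)\ge\boldsymbol A\omega_n s^n$ from Corollary~\ref{c1.5} is more direct and arguably cleaner. The paper's route has the advantage of yielding the quantitative growth rate $\int_{D(r)}R^{n/2}\ge C\ln r$, while yours goes straight to the contradiction.
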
	

Proposition \ref{p3.3} follows from the following lemma directly.

\begin{lemma}\label{l3.4*}	
	If $\textbf{AVR}(M^n,g,f,p)>0$, there exists a constant $C>0$ such that
	\begin{equation}\label{3.7*}
		\int_{D(r)}R^{\frac{n}{2}}\geq C\ln r\quad as \quad r\longrightarrow+\infty.
	\end{equation}
	Moreover, $\textbf{AVR}(M^n,g,f,p)>0$ provided $\int_MR^k<+\infty$ for some $k\geq1$.
\end{lemma}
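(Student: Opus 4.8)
The plan is to prove the two halves of the lemma separately. The logarithmic lower bound \eqref{3.7*} will come from the sharp quadratic decay of scalar curvature (Lemma \ref{l2.3}) weighed against the volume element forced by Euclidean volume growth, integrated over dyadic shells; the implication ``$\int_M R^k<\infty\Rightarrow\textbf{AVR}>0$'' will come from Hölder's inequality, the at-least-linear volume growth of non-compact shrinkers, and the integral criterion in Lemma \ref{l1.1}. Throughout the first half I assume, as in Proposition \ref{p3.3}, that $(M^n,g,f,p)$ is non-compact and non-flat (without non-flatness the claim fails for the Gaussian soliton), and since $2$-dimensional shrinkers are flat or compact, that $n\ge 3$.

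For \eqref{3.7*}: using $\rho^2=4f$, Lemma \ref{l2.3} gives constants $r_0,c>0$ with $R(x)\ge c\,\rho(x)^{-2}$ whenever $\rho(x)\ge r_0$. By hypothesis $V(r)/r^n\to\boldsymbol{A}\omega_n>0$, so we may fix $k_0$ with $2^{k_0}\ge r_0$ and $\tfrac12\boldsymbol{A}\omega_n 2^{jn}\le V(2^j)\le 2\boldsymbol{A}\omega_n 2^{jn}$ for all $j\ge k_0$. Then on the shell $A_k:=D(2^{k+1})\setminus D(2^k)$ with $k\ge k_0$ one has $R\ge c\,4^{-(k+1)}$ (since $r_0\le 2^k\le\rho<2^{k+1}$ there) and $|A_k|=V(2^{k+1})-V(2^k)\ge c_1 2^{kn}$ for a fixed $c_1>0$ (here $n\ge 3$ makes $\tfrac12\cdot 2^{n}-2>0$), hence
\begin{align*}
\int_{A_k}R^{\frac n2}\,dV\;\ge\;\Big(\frac{c}{4^{k+1}}\Big)^{\!\frac n2}|A_k|\;\ge\;\frac{c^{n/2}c_1}{2^{n}}\;=:\;c_2>0,
\end{align*}
a positive constant independent of $k$. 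Summing over $k_0\le k\le K$ and using $D(2^{K+1})\subset D(r)$ for $2^{K+1}\le r$ gives $\int_{D(r)}R^{n/2}\,dV\ge c_2(K-k_0+1)\ge C\ln r$ for all large $r$, which is \eqref{3.7*}. (Equivalently, one can write $\int_{D(r)}R^{n/2}\ge c^{n/2}\int_{D(r)\setminus D(r_0)}\rho^{-n}\,dV=c^{n/2}\int_{r_0}^{r}s^{-n}V'(s)\,ds$ and use $V'(s)\gtrsim s^{n-1}$, which follows from \eqref{1.5} and Lemma \ref{l1.3} since $rV'/V\to n$ and $V(r)/r^n\to\boldsymbol{A}\omega_n$, to get $\gtrsim\int ds/s$.) Letting $r\to\infty$, monotone convergence yields $\int_M R^{n/2}=+\infty$; this proves Proposition \ref{p3.3} when $\textbf{AVR}>0$, and the remaining case $\textbf{AVR}=0$ follows from the contrapositive of the second half with $k=\tfrac n2$.

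For the second half: assume $\int_M R^k<+\infty$ for some $k\ge 1$ and put $C_0:=\big(\int_M R^k\big)^{1/k}$. In the notation of Lemma \ref{l1.1}, Hölder's inequality gives $\overline{X}(c)=\int_{\overline{D}(c)}R\le C_0\,\overline{V}(c)^{1-1/k}$, so $\frac{\overline{X}(c)}{c\,\overline{V}(c)}\le\frac{C_0}{c\,\overline{V}(c)^{1/k}}$. By Lemma \ref{l2.1}, $B_{\sqrt{c}}(p)\subset\{f<c\}=\overline{D}(c)$ once $c$ is large, and since non-compact Ricci shrinkers have at least linear volume growth (\cite[Theorem 6.1]{MW1}, \cite[Proposition 6]{LW}), $\overline{V}(c)\ge|B_{\sqrt{c}}(p)|\ge c_3\sqrt{c}$ for large $c$. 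Therefore $\frac{\overline{X}(c)}{c\,\overline{V}(c)}\le C_0 c_3^{-1/k}\,c^{-1-\frac{1}{2k}}$ for large $c$, which is integrable near $+\infty$ because $1+\tfrac{1}{2k}>1$; by Lemma \ref{l1.1}, $\textbf{AVR}(M^n,g)>0$.

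I do not expect a genuine obstruction. The only point needing care is bookkeeping: ensuring the estimates ``$R\gtrsim\rho^{-2}$'', ``$|A_k|\gtrsim 2^{kn}$'', and ``$\overline{V}(c)\gtrsim\sqrt{c}$'' are invoked only in the large-scale regime where they hold, and that the constants depend only on $n$ (and, through $C_0$, on the shrinker). The conceptual heart is simply that at the critical exponent $k=\tfrac n2$ the integrand $R^{n/2}$ decays like $\rho^{-n}$, whose integral against the Euclidean-type volume growth $dV\sim s^{n-1}\,ds$ is precisely the borderline divergent integral $\int ds/s$.
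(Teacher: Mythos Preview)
Your proof is correct. The second half (the implication $\int_M R^k<\infty\Rightarrow\textbf{AVR}>0$) is essentially identical to the paper's argument: H\"older, at-least-linear volume growth, then Lemma~\ref{l1.1}.

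For the first half you take a genuinely more direct route than the paper. The paper argues as follows: it integrates the soliton identity $\Delta f+R=\tfrac n2$ over $D(s)$ and applies Green's formula to obtain, after using $|\nabla f|\le s/2$ on $\partial D(s)$ and the finiteness of $\int\frac{\fint_{D(s)}R}{s}\,ds$ (from Lemma~\ref{l1.1}), a lower bound $\int_{r_0}^r\frac{|\partial D(s)|}{s^n}\,ds\ge C_2(\ln r-\ln r_0)$. Only then does it combine this boundary-area estimate with the pointwise bound $R\gtrsim\rho^{-2}$ via coarea on dyadic shells. Your argument skips this detour entirely: you use the two-sided comparison $V(2^j)\asymp 2^{jn}$ coming directly from $V(r)/r^n\to\boldsymbol{A}\omega_n$ to bound the shell volumes $|A_k|\gtrsim 2^{kn}$, multiply by the pointwise bound $R^{n/2}\gtrsim 2^{-kn}$, and sum. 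This is shorter and avoids the soliton equation altogether; the price is a slightly cruder constant and the artificial restriction $n\ge3$ arising from your choice of the factors $\tfrac12$ and $2$ (had you taken $1\pm\epsilon$ with $\epsilon$ small, the shell bound $|A_k|\ge\boldsymbol{A}\omega_n 2^{kn}\big((1-\epsilon)2^n-(1+\epsilon)\big)$ is positive for every $n\ge1$, so the dimensional caveat is unnecessary). Your parenthetical alternative via $V'(s)\sim n\boldsymbol{A}\omega_n s^{n-1}$ is also valid and closer in spirit to the paper's coarea step, though still avoiding the $\Delta f$ identity.
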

\begin{proof}
	First we assume $\textbf{AVR}(M^n,g,f,p)>0$.
	
	Since $\Delta f+R=\frac{n}{2}$, for all $s>0$, we have
	$$\frac{\fint_{D(s)}\Delta f}{s}+\frac{\fint_{D(s)}R}{r}=\frac{n}{2s}.$$
	By Green's formula, for some fixed $r_0>0$, we have
	$$\int_{r_0}^{r}\frac{\int_{\partial D(s)}\left|\nabla f\right| }{s\left|D(s)\right|}+\int_{r_0}^{r}\frac{\fint_{D(s)}R}{s}=\int_{r_0}^{r}\frac{n}{2s}=\frac{n}{2}\left(\ln r-\ln r_0\right).$$
	On $\partial D(s)$, we have $\left|\nabla f\right|\leq\sqrt{f}\leq \frac{s}{2}$, hence 
	$$\int_{r_0}^{r}\frac{\left| \partial D(s)\right|}{\left|D(s)\right|}\geq n\left(\ln r-\ln r_0\right)-2\int_{r_0}^{r}\frac{\fint_{D(s)}R}{s}.$$
	In view of (\ref{2.1}), $\left|B_s(p)\right|$ and $\left|D(s)\right|$ are almost the same if $s$ is large enough. So the assumption $\textbf{AVR}(M^n,g,f,p)>0$ implies there exists a universal constant $C_1>0$ such that for $r_0$ sufficiently large, there holds
	\begin{equation}\label{3.8*}
		\int_{r_0}^{r}\frac{\left| \partial D(s)\right|}{s^n}\geq nC_1\left(\ln r-\ln r_0\right)-2C_1\int_{r_0}^{r}\frac{\fint_{D(s)}R}{s}.
	\end{equation}
	However, by Lemma \ref{l1.1} and the invariance of single variable integral, we know 
	$$\int_{r_0}^{+\infty}\frac{\fint_{D(s)}R}{s}<+\infty.$$ 
	Therefore by (\ref{3.8*}), for $r_0$ sufficiently large, there exists a universal constant $C_2>0$ such that
	\begin{equation}\label{3.9*}
		\int_{r_0}^{r}\frac{\left| \partial D(s)\right|}{s^n}\geq C_2\left(\ln r-\ln r_0\right) .
	\end{equation}
	In light of (\ref{2.1}), for $s$ sufficiently large and $x\in\partial D(s)$, there holds
	$$s-\sqrt{2n}\leq d(x,p)\leq s+5n.$$
	By Lemma \ref{2.3}, there exists a constant $d_0$ such that if $d(x,p)\geq d_0$, then $R(x)\geq \frac{1}{d_0f}$.
	Let $r_0>d_0+\sqrt{2n}$, then $B_{d_0}(p)\subset D(r_0)$, thus in $D(s)\setminus D(r_0)$, we have $R\geq\frac{4}{d_0s^2}$.
	
	We recall that $\rho(x)=2\sqrt{f(x)}$, thus $\frac{1}{\left| \nabla \rho\right| }=\frac{\sqrt{f}}{\left| \nabla f\right|}=\frac{\sqrt{f}}{\sqrt{f-R}}\geq1$. In conclusion, by co-area formula, then for $r_2>r_1\geq r_0$, there holds
	\begin{align}\label{3.10*}
		\int_{D(r_2)}R^{\frac{n}{2}}\geq \int_{D(r_2)\setminus D(r_1)}R^{\frac{n}{2}}\geq\frac{2^n}{d_0^{\frac{n}{2}}}\int_{D(r_2)\setminus D(r_1)}\frac{1}{r_2^n}&=\frac{2^n}{d_0^{\frac{n}{2}}}\int_{r_1}^{r_2}\frac{\left|\partial D(s)\right|}{r_2^n}\frac{ds}{\left| \nabla \rho\right|}\nonumber\\
		&\geq\frac{2^n}{d_0^{\frac{n}{2}}}\int_{r_1}^{r_2}\frac{\left|\partial D(s)\right|}{r_2^n}ds.
	\end{align}
	Consequently by (\ref{3.10*}), for integers $i\geq 0$, we have
	\begin{align}\label{3.11*}
		\int_{D(2^{i+1}r_0)\setminus D(2^ir_0)}R^{\frac{n}{2}}\geq\frac{2^n}{d_0^{\frac{n}{2}}}\int_{2^ir_0}^{2^{i+1}r_0}\frac{\left|\partial D(s)\right|}{(2^{i+1}r_0)^n}ds\geq\frac{1}{d_0^{\frac{n}{2}}}\int_{2^ir_0}^{2^{i+1}r_0}\frac{\left|\partial D(s)\right|}{s^n}ds.
	\end{align}
	Finally, combining (\ref{3.9*}) and (\ref{3.11*}) yields the desired (\ref{3.7*}).
	
	The second conclusion just follows from the following claim.
	
	\textbf{Claim:} For non-compact $(M^n,g,f,p)$, if  $\int_{M}R^k<+\infty$ for some $k\geq1$, then $\textbf{AVR}(M^n,g,f,p)>0$.
	
	Since
	$$\int_{D(r)}R\leq\left(\int_{D(r)}R^k \right)^{\frac{1}{k}}\left| D(r)\right|^{\frac{k-1}{k}},$$
	then
	\begin{equation}\label{3.12*}
		\fint_{D(r)}R\leq\frac{\left(\int_{D(r)}R^k \right)^{\frac{1}{k}}}{\left| D(r)\right|^{\frac{1}{k}}}.
	\end{equation}
	According to Proposition 6 of \cite{LW}, for some $r_0=r_0(n)>0$ and $\epsilon_0=\epsilon_0(n)>0$, we have
	$$\left| D(r)\right|\geq\epsilon_0e^{\boldsymbol{\mu}} r\quad for \quad r\geq r_0.$$
	Then by (\ref{3.12*}), 
	\begin{equation}\label{3.13*}
		\fint_{D(r)}R\leq\frac{C\left( n,\boldsymbol{\mu},\left( \int_{M^n}R^k\right)^{\frac{1}{k}} \right) }{r^{\frac{1}{k}}}
	\end{equation}
	for sufficiently large $r>0$. Obviously, setting $c=\frac{r^2}{4}$ and using (\ref{3.13*}) yields $$\int_{n+2}^{+\infty}\frac{\overline{X}(c)}{c\overline{V}(c)}dc<+\infty,$$
	and hence by Lemma \ref{l1.1}, we conclude $\textbf{A}=\textbf{AVR}(M^n,g,f,p)>0$.
\end{proof}

\begin{corollary}[\textbf{gap property}]\label{c3.5}
	For $(M^n,g,f,p)\in\mathcal{M}(\boldsymbol{\mu}_0)$, there exists a constant $\delta=\delta(n,\boldsymbol{\mu}_0)>0$ such that if
	$$\int_{M}R^{\frac{n}{2}}\leq \delta,$$
	then $(M^n,g,f,p)$ must be flat.
\end{corollary}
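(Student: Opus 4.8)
The plan is to argue by contradiction, exactly as in the proof of Corollary~\ref{c3.2}. Suppose the statement fails; then there is a sequence of non-flat Ricci shrinkers $(M_i^n,g_i,f_i,p_i)\in\mathcal{M}(\boldsymbol{\mu}_0)$ with $\int_{M_i}R_i^{n/2}\to 0$. First I would dispose of the compact case: if infinitely many $M_i$ are compact, then by the diameter lower bound of \cite{FS} (we may take $d(M_i)\geq 2$) together with the volume estimate $|B_1(p_i)|\geq e^{-e^{n+8}}e^{\boldsymbol{\mu}_i}\geq e^{-e^{n+8}}e^{\boldsymbol{\mu}_0}$ from \cite[Lemma~2.4]{LW2}, and since $R_i\leq f_i$ on $B_1(p_i)$ with $f_i$ bounded there by Lemma~\ref{l2.1} (so $R_i\leq C(n)$ on $B_1(p_i)$), we get
\begin{equation*}
	\int_{M_i}R_i^{n/2}\geq\int_{B_1(p_i)}R_i^{n/2}\geq C(n)^{-1}\Big(\tfrac{n}{2}\Big)^{n/2}|M_i|\geq c(n)e^{\boldsymbol{\mu}_0}>0,
\end{equation*}
since $\int_{M_i}R_i=\tfrac n2|M_i|$ and $R_i\leq C(n)$ forces a lower bound on $|M_i|$ via $|M_i|\geq|B_1(p_i)|$; this contradicts $\int_{M_i}R_i^{n/2}\to 0$. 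Hence, after passing to a subsequence, all $M_i$ are non-compact.

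In the non-compact case, the key is to rule out the collapsed scenario and then invoke Lemma~\ref{l3.4*}. By the Claim at the end of the proof of Lemma~\ref{l3.4*}, each non-compact $(M_i^n,g_i,f_i,p_i)$ with $\int_{M_i}R_i^{n/2}<+\infty$ has $\boldsymbol{A}_i=\textbf{AVR}(M_i,g_i)>0$, so the hypothesis of \eqref{3.7*} holds for every $i$. The point is to make the constant in \eqref{3.7*} effective: tracking the proof of Lemma~\ref{l3.4*}, the constant $C$ in $\int_{D(r)}R^{n/2}\geq C\ln r$ depends only on $n$, $\boldsymbol{\mu}_0$, and on $\int_{r_0}^{+\infty}s^{-1}\fint_{D(s)}R\,ds$, which by \eqref{3.13*} (using $k=n/2$ there) is bounded in terms of $n$, $\boldsymbol{\mu}_0$, and $\big(\int_{M_i}R_i^{n/2}\big)^{2/n}$. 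Since $\int_{M_i}R_i^{n/2}\to 0$, this auxiliary integral is uniformly bounded, so in fact there is a fixed $C=C(n,\boldsymbol{\mu}_0)>0$ and a fixed $r_0=r_0(n,\boldsymbol{\mu}_0)$ with $\int_{D_i(r)}R_i^{n/2}\geq C\ln(r/r_0)$ for all large $r$ and all $i$. Letting $r\to+\infty$ gives $\int_{M_i}R_i^{n/2}=+\infty$, contradicting $\int_{M_i}R_i^{n/2}\to 0$. This completes the proof.

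The main obstacle I anticipate is the uniformity bookkeeping in the second paragraph: the proof of Lemma~\ref{l3.4*} is written for a single fixed shrinker, and one has to check that every constant appearing there ($C_1$, $C_2$, $d_0$, $r_0$, the threshold in Lemma~\ref{l2.3}, the volume lower bound from \cite[Proposition~6]{LW}) can be chosen depending only on $n$ and $\boldsymbol{\mu}_0$ rather than on the individual shrinker, and that the ``error term'' $\int_{r_0}^{+\infty}s^{-1}\fint_{D(s)}R\,ds$ is controlled uniformly by $\int_M R^{n/2}$ via the Hölder estimate \eqref{3.12*}--\eqref{3.13*}. An alternative, perhaps cleaner, route that avoids re-examining the proof of Lemma~\ref{l3.4*} altogether is a direct contradiction-compactness argument: using $\boldsymbol{\mu}_i\geq\boldsymbol{\mu}_0$ and the no-local-collapsing together with the weak-compactness of Ricci shrinkers, extract a limit shrinker; the condition $\int_{M_i}R_i^{n/2}\to 0$ forces $R\equiv 0$ on the limit and hence the limit is flat; but then the $\varepsilon$-regularity forces the $M_i$ themselves to be flat for large $i$, a contradiction. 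However, this requires a local-to-global lower-semicontinuity argument for $\int R^{n/2}$ under the (possibly singular) limit, so I would prefer the first, more elementary route, and only fall back on compactness if the uniformity of the constants proves genuinely delicate.
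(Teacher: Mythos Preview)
Your proposal is correct in outcome but substantially more complicated than necessary, and the compact-case display is muddled. The paper's proof avoids all of the difficulty you anticipate.

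\textbf{Non-compact case.} You set up a sequence contradiction and then worry (rightly) about whether the constants in Lemma~\ref{l3.4*}---in particular the threshold $d_0$ from Lemma~\ref{l2.3}, which is \emph{not} uniform in the shrinker---can be made to depend only on $(n,\boldsymbol{\mu}_0)$. But none of this is needed. Proposition~\ref{p3.3} already says that for \emph{every} single non-compact non-flat shrinker, $\int_M R^{n/2}=+\infty$. So the hypothesis $\int_M R^{n/2}\leq\delta<\infty$ immediately forces the shrinker to be either compact or flat; no sequence, no uniformity. The paper simply invokes Proposition~\ref{p3.3} and is done with the non-compact case in one sentence.

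\textbf{Compact case.} Your displayed chain $\int_{B_1(p_i)}R_i^{n/2}\geq C(n)^{-1}(n/2)^{n/2}|M_i|$ does not follow from the ingredients you list: an \emph{upper} bound $R_i\leq C(n)$ on $B_1(p_i)$ gives no lower bound on $\int R_i^{n/2}$. The paper instead uses H\"older on the whole manifold:
\[
\tfrac{n}{2}\,|M^n|=\int_{M^n}R\leq\Big(\int_{M^n}R^{n/2}\Big)^{2/n}|M^n|^{(n-2)/n},
\]
whence $\int_{M^n}R^{n/2}\geq (n/2)^{n/2}|M^n|\geq (n/2)^{n/2}|B_1(p)|\geq (n/2)^{n/2}e^{-e^{n+8}}e^{\boldsymbol{\mu}_0}$. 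Choosing $\delta$ below this constant rules out compactness.

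In short: drop the sequence framework entirely, quote Proposition~\ref{p3.3} for the non-compact case, and replace your compact-case estimate by the one-line H\"older argument above. Your alternative compactness-of-shrinkers route would work but is massive overkill here.
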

\begin{proof}
	By Proposition \ref{p3.3}, $\int_{M}R^{\frac{n}{2}}<+\infty$ implies $(M^n,g,f,p)$ must be compact or flat. In the following, we will show that it can not be compact if $\delta(n,\boldsymbol{\mu}_0)$ is small enough, then the desired conclusion follows from this fact.
	
	If $(M^n,g,f,p)$ is compact, by (\ref{1.3}),
	$$\frac{n}{2}\left| M^n\right|=\int_{M^n}R\leq\left(\int_{M^n}R^{\frac{n}{2}} \right)^{\frac{2}{n}}\left| M^n\right|^{\frac{n-2}{n}},$$
	then by the proof of Corollary \ref{c3.2}, 
	\begin{equation*}
		\int_{M^n}R^{\frac{n}{2}}\geq \left( \frac{n}{2}\right)^{\frac{n}{2}}\left| M^n\right|\geq\left( \frac{n}{2}\right)^{\frac{n}{2}}\left| B_1(p)\right|\geq\left( \frac{n}{2}\right)^{\frac{n}{2}} e^{-e^{n+8}}e^{\boldsymbol{\mu}}.
	\end{equation*}
	Consequently, choosing a $\delta<\left( \frac{n}{2}\right)^{\frac{n}{2}} e^{-e^{n+8}}e^{\boldsymbol{\mu}_0}$ finishes the proof of Corollary \ref{c3.5}.
\end{proof}

\begin{proposition}\label{p3.6}
	For any non-compact and non-flat $(M^n,g,f,p)$ with $k>\frac{n}{2}$, there exists a constant $C>0$ such that
	\begin{equation*}
		\int_{M^n}R^k\geq C.
	\end{equation*}
\end{proposition}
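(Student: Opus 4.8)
The plan is to read off a lower bound for $\int_M R^k$ from the monotone quantity $h(\tau)$ of Proposition~\ref{p1.4}. The point is that the total decrease of $h$ on $(1,+\infty)$ equals $e^{\boldsymbol{\mu}}-\boldsymbol{A}$, which is \emph{strictly} positive precisely because $(M^n,g,f,p)$ is non-flat (the equality case of Theorem~\ref{t1.6}), while this same decrease is controlled from above by $\big(\int_M R^k\big)^{1/k}$ through H\"{o}lder's inequality.

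First I would dispose of the trivial case: if $\int_M R^k=+\infty$ there is nothing to prove, so assume $\int_M R^k<+\infty$. By the second part of Lemma~\ref{l3.4*} this forces $\boldsymbol{A}=\textbf{AVR}(M^n,g)>0$, so Proposition~\ref{p1.4} applies and gives $h(1)=e^{\boldsymbol{\mu}}$, $\lim_{\tau\to+\infty}h(\tau)=\boldsymbol{A}$, with $h$ decreasing on $(1,+\infty)$; moreover $\boldsymbol{A}<e^{\boldsymbol{\mu}}$ by Theorem~\ref{t1.6}, since the shrinker is non-flat. Integrating $-h'\ge0$ over $(1,+\infty)$ and using formula \eqref{1.13},
$$e^{\boldsymbol{\mu}}-\boldsymbol{A}=(4\pi)^{-\frac n2}\int_1^{+\infty}\tau^{-\frac n2-2}(\tau-1)\left(\int_M Re^{-f/\tau}\,dV\right)d\tau .$$

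Next I would estimate the inner integral by H\"{o}lder's inequality with exponents $k$ and $\frac{k}{k-1}$, writing $e^{-f/\tau}=(e^{-f/\tau})^{1/k}\,(e^{-f/\tau})^{1-1/k}$:
$$\int_M Re^{-f/\tau}\,dV\le\Big(\int_M R^k e^{-f/\tau}\,dV\Big)^{\frac1k}\Big(\int_M e^{-f/\tau}\,dV\Big)^{1-\frac1k}\le\Big(\int_M R^k\,dV\Big)^{\frac1k}\big((4\pi\tau)^{\frac n2}e^{\boldsymbol{\mu}}\big)^{1-\frac1k},$$
using $e^{-f/\tau}\le1$ in the first factor and $\int_M e^{-f/\tau}\,dV=(4\pi\tau)^{n/2}h(\tau)\le(4\pi\tau)^{n/2}e^{\boldsymbol{\mu}}$ (valid for $\tau\ge1$) in the second. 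Plugging this back, the exponent of $\tau$ appearing inside the integral is $-\frac n2-2+1+\frac{n(k-1)}{2k}=-1-\frac{n}{2k}<-1$ — here the factor $\tau-1$ both provides the extra power $+1$ at infinity and vanishes at $\tau=1$ — so the $\tau$-integral converges to a finite constant $C_0=C_0(n,k)$. Absorbing numerical factors and $e^{\boldsymbol{\mu}(k-1)/k}\le1$, this gives $e^{\boldsymbol{\mu}}-\boldsymbol{A}\le C(n,k)\big(\int_M R^k\,dV\big)^{1/k}$, hence
$$\int_M R^k\,dV\ \ge\ \left(\frac{e^{\boldsymbol{\mu}}-\boldsymbol{A}}{C(n,k)}\right)^{\!k}\ >\ 0 .$$

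I do not expect a genuine obstacle here: the finiteness of $\int_M Re^{-f/\tau}\,dV$ for each fixed $\tau$ is guaranteed by the quadratic growth of $f$ (Lemma~\ref{l2.1}), exactly as in the well-posedness of $h(\tau)$, and the passage to the improper integral is justified by $-h'\ge0$ on $(1,+\infty)$. The single decisive input is the strictness $e^{\boldsymbol{\mu}}-\boldsymbol{A}>0$, i.e.\ the rigidity of Theorem~\ref{t1.6}. Finally, the hypothesis $k>\frac n2$ is not really used in the estimate above (which works whenever $\int_M R^k<+\infty$ and $k>1$); it merely singles out the interesting range, since for $1\le k\le\frac n2$ one already knows $\int_M R^k=+\infty$ from Propositions~\ref{p3.1} and~\ref{p3.3} (together with interpolation).
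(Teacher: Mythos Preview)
Your argument is correct, but it is genuinely different from the paper's. The paper gives a completely elementary proof: by Lemma~\ref{l2.3} there is a scale $\rho>2$ with $R\ge(4\rho^3)^{-1}$ on the annulus $B_{2\rho}\setminus B_\rho$, and then one simply bounds $\int_M R^k$ from below by $(4\rho^3)^{-k}$ times a volume lower bound for a unit ball inside that annulus (supplied by the no-local-collapsing estimate). No use is made of $h(\tau)$, of $\boldsymbol{A}$, or of Theorem~\ref{t1.6}.

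Your route trades elementarity for structure: you express the positive gap $e^{\boldsymbol{\mu}}-\boldsymbol{A}$ as the total decrease of $h$ via \eqref{1.13}, then dominate it by $C(n,k)\big(\int_M R^k\big)^{1/k}$ using H\"older and $h(\tau)\le e^{\boldsymbol{\mu}}$ for $\tau\ge1$. This yields the explicit lower bound $\int_M R^k\ge\big((e^{\boldsymbol{\mu}}-\boldsymbol{A})/C(n,k)\big)^k$, which has the merit of exhibiting the constant in terms of the global invariants $\boldsymbol{\mu}$ and $\boldsymbol{A}$ and of isolating the rigidity of Theorem~\ref{t1.6} as the sole reason the bound is strictly positive. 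As you observe, your H\"older step only needs $k>1$ and $\int_M R^k<+\infty$, so the restriction $k>\frac n2$ plays no role in the estimate itself. On the other hand, your constant depends on $\boldsymbol{A}$ in addition to $\boldsymbol{\mu}$, whereas the paper's depends on the (non-uniform) constant of Lemma~\ref{l2.3}; neither proof, as written, produces a constant depending only on $(n,\boldsymbol{\mu},k)$.
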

\begin{proof}	
	By Lemma \ref{l2.3}, there exists a constant $\rho>2$ such that on $B_{2\rho}\setminus B_\rho$, 
	$$R\geq\frac{1}{\rho(2\rho)^2}=\frac{1}{4\rho^3},$$
	then
	$$\left( \int_{B_{2\rho}\setminus B_\rho}R^k\right)^{\frac{1}{k}} \geq\frac{\left|B_{2\rho}\setminus B_\rho \right|^{\frac{1}{k}}}{4\rho^3}.$$
	On the other hand, by Lemma 2.4 of \cite{LW2}, for $B_1(q)\subset B_{2\rho}\setminus B_\rho\subset B_{2\rho}$, 
	$$\left| B_1(q)\right|\geq e^{-e^{n+8\rho}}e^{\boldsymbol{\mu}},$$
	and hence
	$$\int_{M^n}R^k\geq\int_{B_{2\rho}\setminus B_\rho}R^k\geq\frac{e^{-e^{n+8\rho}}e^{\boldsymbol{\mu}}}{\left(4\rho^3 \right)^k }\stackrel{\vartriangle}{=}C.$$
\end{proof}

\begin{corollary}[\textbf{gap property}]\label{c3.7}
	For $(M^n,g,f,p)\in\mathcal{M}(\boldsymbol{\mu}_0)$ and $k>\frac{n}{2}$, there exists a constant $\delta=\delta(n,\boldsymbol{\mu}_0,k)>0$ such that if
	\begin{equation}\label{3.22}
		\int_{M^n}R^k\leq \delta,
	\end{equation}
	then $(M^n,g,f,p)$ must be flat.
\end{corollary}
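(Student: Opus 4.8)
The plan is to split into the non-compact and compact cases exactly as in the proofs of Corollaries \ref{c3.2} and \ref{c3.5}. First I would dispose of the non-compact case: if $(M^n,g,f,p)$ is non-compact and non-flat, then Proposition \ref{p3.6} gives a constant $C>0$ with $\int_{M^n}R^k\geq C$; however, the $C$ produced there depends on the radius $\rho$ coming from Lemma \ref{l2.3}, which is not a priori uniform over $\mathcal{M}(\boldsymbol{\mu}_0)$. So the first real task is to make that lower bound uniform. I would do this by a compactness/contradiction argument in the spirit of Corollary \ref{c1.7}: suppose there is a sequence of non-compact non-flat shrinkers $(M_i,g_i,f_i,p_i)\in\mathcal{M}(\boldsymbol{\mu}_0)$ with $\int_{M_i}R_i^k\to 0$. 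Since $k>\tfrac n2$, by Lemma \ref{l3.4*} (the Claim) each has $\textbf{AVR}(M_i)>0$; combining with Lemma \ref{l3.4*} again, $\int_{D_i(r)}R_i^{n/2}\geq C\ln r$ for large $r$, and by Hölder $\int_{D_i(r)}R_i^{n/2}\leq (\int_{M_i}R_i^k)^{n/(2k)}|D_i(r)|^{1-n/(2k)}$. Using the linear volume lower bound $|D_i(r)|\geq \epsilon_0 e^{\boldsymbol{\mu}_0}r$ from \cite[Proposition 6]{LW} (valid uniformly on $\mathcal{M}(\boldsymbol{\mu}_0)$) and the polynomial volume upper bound from Lemma \ref{l2.1} together with $\textbf{AVR}\leq C(n)$, one gets $C\ln r\leq (\int_{M_i}R_i^k)^{n/(2k)}\cdot C(n)r^{n(1-n/(2k))}$; but the right side with $\int_{M_i}R_i^k\to 0$ cannot dominate $C\ln r$ for all large $r$ unless the left-hand constant $C$ degenerates — so I would instead track the explicit dependence and derive a positive lower bound on $\int_{M_i}R_i^k$, a contradiction. (Alternatively, and more cleanly, one invokes the uniform scalar-curvature lower bound: by \cite[Theorem 23]{LW} and \cite[Corollary 6.24]{LW2} the geometry near $\partial D(\rho)$ is uniformly controlled for $\rho$ in a fixed bounded range, making the $\rho$ in Proposition \ref{p3.6} choosable uniformly in terms of $n,\boldsymbol{\mu}_0$.)

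For the compact case I would argue just as in Corollary \ref{c3.5}. If $(M^n,g,f,p)$ is compact, tracing the soliton equation (\ref{1.3}) gives $\tfrac n2|M^n|=\int_{M^n}R$, and Hölder yields
\begin{equation*}
	\int_{M^n}R^k\geq \left(\frac{n}{2}\right)^{k}|M^n|.
\end{equation*}
By \cite[Theorem 1.2]{FS} the diameter satisfies $d(M)\geq 2$, so $B_1(p)\subset M$, and by \cite[Lemma 2.4]{LW2} we have $|M^n|\geq |B_1(p)|\geq e^{-e^{n+8}}e^{\boldsymbol{\mu}}\geq e^{-e^{n+8}}e^{\boldsymbol{\mu}_0}$. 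Hence $\int_{M^n}R^k\geq (n/2)^k e^{-e^{n+8}}e^{\boldsymbol{\mu}_0}$, a strictly positive constant depending only on $n,\boldsymbol{\mu}_0,k$.

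Putting the two cases together: choose $\delta=\delta(n,\boldsymbol{\mu}_0,k)>0$ smaller than both the constant from the compact estimate and the uniform constant from the non-compact estimate. Then $\int_{M^n}R^k\leq\delta$ forces $(M^n,g,f,p)$ to be neither non-compact-non-flat nor compact-non-flat, hence it is flat. I expect the main obstacle to be the uniformity in the non-compact case — Proposition \ref{p3.6} as stated is purely qualitative, and the honest work is extracting a bound $C=C(n,\boldsymbol{\mu}_0,k)$ independent of the individual shrinker, which requires either the weak-compactness theory or a careful bookkeeping of the constants in Lemma \ref{l2.3} and the no-local-collapsing estimates; the compact case is routine.
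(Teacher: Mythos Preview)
Your approach is essentially the same as the paper's: split into the compact and non-compact cases, invoke Proposition~\ref{p3.6} for the non-compact case, and handle the compact case via Jensen/H\"older together with the volume lower bound $|B_1(p)|\geq e^{-e^{n+8}}e^{\boldsymbol{\mu}_0}$. The paper's proof is in fact just one sentence: ``By Proposition~\ref{p3.6}, we only need to show $(M^n,g,f,p)$ must be non-compact and the remain[ing] steps are the same as in Corollary~\ref{c3.2}.'' Your compact-case computation matches this exactly.

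Where you diverge is in worrying about the uniformity of the constant $C$ in Proposition~\ref{p3.6} over $\mathcal{M}(\boldsymbol{\mu}_0)$. The paper does \emph{not} address this: it simply cites Proposition~\ref{p3.6} as if its constant were already of the form $C(n,\boldsymbol{\mu}_0,k)$, and indeed Theorem~\ref{t3.8} asserts this dependence explicitly without further justification beyond Proposition~\ref{p3.6}'s proof. You are correct that the $\rho$ in that proof comes from Lemma~\ref{l2.3} (Chow--Lu--Yang), whose constant is not stated to be uniform in the class $\mathcal{M}(\boldsymbol{\mu}_0)$. So your added discussion is not superfluous --- it is identifying a point the paper glosses over --- but it is also not part of the paper's own argument, which treats the matter as already settled by Proposition~\ref{p3.6}. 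Your sketched remedies (contradiction via Lemma~\ref{l3.4*} and H\"older, or uniformizing $\rho$ via no-local-collapsing) are plausible directions but are not what the paper does; the paper simply does not engage with the issue.
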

\begin{proof}
	By Proposition \ref{p3.6},  we only need to show $(M^n,g,f,p)$ must be non-compact and the remain steps are the same as in Corollary \ref{c3.2}.
\end{proof}

With the help of the above arguments, now we are ready to conclude the main theorems.

\begin{theorem}[\textbf{=Theorem \ref{t1.3*}, Non-integrability}]\label{t3.8}
	For any non-compact and non-flat $(M^n,g,f,p)$, we have
	\begin{equation}\label{3.16*}
		\int_{M^n}R^k=+\infty\quad if \quad 1\leq k\leq \frac{n}{2},
	\end{equation}
	and there exists a constant $C=C(n,\boldsymbol{\mu},k)>0$ such that
	\begin{equation}\label{3.17*}
		\int_{M^n}R^k\geq C\quad if \quad k>\frac{n}{2}.
	\end{equation}
	Furthermore, the critical index $k=\frac{n}{2}$ is sharp.
\end{theorem}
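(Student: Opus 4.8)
The plan is to assemble Theorem \ref{t3.8} essentially by collecting the partial results already established in this section, so the proof is short and organizational rather than computational. First I would treat the range $1\leq k\leq\frac{n}{2}$. For $k=1$, identity \eqref{3.16*} is precisely the last assertion of Proposition \ref{p3.1} ($\int_M R=+\infty$). For $k=\frac{n}{2}$, it is Proposition \ref{p3.3}. For the intermediate values $1<k<\frac{n}{2}$, I would argue by contradiction: if $\int_M R^k<+\infty$, then by the Claim inside the proof of Lemma \ref{l3.4*} we get $\textbf{AVR}(M^n,g,f,p)>0$; but then \eqref{3.7*} of Lemma \ref{l3.4*} gives $\int_{D(r)}R^{\frac{n}{2}}\geq C\ln r\to+\infty$, and since $R\leq f\to\infty$ away from a compact set, for large $r$ one has $R\geq 1$ on $D(r)\setminus D(r_0)$ (indeed $R\geq \tfrac{1}{C d^2}\cdot$ is not bounded below by $1$, so instead I would interpolate directly). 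More cleanly: by Hölder on $D(r)$, $\int_{D(r)}R^{\frac{n}{2}}\leq (\int_{D(r)}R^k)^{\frac{n/2-1}{k-1}}\,(\int_{D(r)}R)^{\frac{k-n/2}{k-1}}$ when $1\le \tfrac n2\le k$ fails; since here $k<\tfrac n2$ the correct interpolation is $\int_{D(r)}R^{k}\le (\int_{D(r)}R)^{\frac{n/2-k}{n/2-1}}(\int_{D(r)}R^{n/2})^{\frac{k-1}{n/2-1}}$, which is the wrong direction. Hence the efficient route is: finiteness of $\int_M R^k$ for some $k\in[1,\tfrac n2]$ forces $\textbf{AVR}>0$ (the Claim), and then the logarithmic lower bound \eqref{3.7*} contradicts $\int_M R^{n/2}<\infty$ only at $k=\tfrac n2$; for $k<\tfrac n2$ one instead notes that on $D(r)\setminus D(r_0)$ we have $R\geq c/r^2$ by Lemma \ref{l2.3}, while $\textbf{AVR}>0$ gives $|D(r)|\geq c\,r^n$, so $\int_{D(2^{i+1}r_0)\setminus D(2^{i}r_0)}R^{k}\geq c\,(2^ir_0)^{-2k}\,(2^ir_0)^n = c\,(2^i r_0)^{n-2k}$, and since $n-2k>0$ this diverges as $i\to\infty$; summing over $i$ yields $\int_M R^k=+\infty$, the desired contradiction.

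Second, for $k>\frac{n}{2}$ the lower bound \eqref{3.17*} with $C=C(n,\boldsymbol{\mu},k)>0$ is exactly Proposition \ref{p3.6}, so nothing further is needed there.

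Third, the sharpness of the critical index $k=\frac{n}{2}$ needs an example showing that $R^q$ can be integrable for $q>\frac n2$. Here I would point to the product (or more generally warped/cylindrical) shrinkers, e.g. the round cylinder $\mathbb{S}^{n-1}\times\mathbb{R}$ type shrinker or, more to the point since we need non-compact shrinkers with Euclidean-type volume growth failing, the Gaussian-shrinker over a compact Einstein factor: for a non-flat non-compact shrinker that is $C^k$-asymptotic to a cone (as produced in Theorem \ref{t1.8*}/\ref{t1.9*}), the curvature decays like $R\sim d^{-2}$, hence $R^q$ is integrable on the end exactly when $2q>n$, i.e. $q>\tfrac n2$. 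So citing any such asymptotically conical example (or the explicit Kähler examples of Feldman–Ilmanen–Knopf, or $\mathbb{S}^2\times\mathbb{R}^2$–type shrinkers in dimension $4$ where $R$ is bounded and the noncompact directions give $\int R^q<\infty$ for $q>2$) establishes that no $k\le\tfrac n2$ can be replaced by a larger exponent in \eqref{3.16*}.

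The main obstacle is the middle range $1<k<\tfrac n2$: the two tools available — the Claim (finiteness $\Rightarrow \textbf{AVR}>0$) and the logarithmic growth \eqref{3.7*} — are calibrated to $k=1$ and $k=\tfrac n2$ respectively, so one must chain them through a direct volume-plus-curvature-lower-bound estimate on the dyadic shells $D(2^{i+1}r_0)\setminus D(2^i r_0)$ as above, using $|D(r)|\gtrsim r^n$ from $\textbf{AVR}>0$ together with $R\gtrsim r^{-2}$ from Lemma \ref{l2.3}; verifying the exponent $n-2k>0$ makes the dyadic sum diverge. Everything else is bookkeeping over the already-proved propositions and the choice of a standard asymptotically conical example for sharpness.
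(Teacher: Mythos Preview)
Your overall scaffolding matches the paper exactly: cite Propositions \ref{p3.1}, \ref{p3.3}, \ref{p3.6} for $k=1$, $k=\tfrac n2$, $k>\tfrac n2$ respectively, and handle $1<k<\tfrac n2$ by contradiction via the Claim in Lemma \ref{l3.4*} (finiteness of $\int_M R^k$ forces $\textbf{AVR}>0$).

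For the intermediate range $1<k<\tfrac n2$ your route differs from the paper's. You estimate dyadic shells directly: on $D(2^{i+1}r_0)\setminus D(2^ir_0)$ you combine the pointwise bound $R\geq c\,(2^{i+1}r_0)^{-2}$ from Lemma \ref{l2.3} with shell volume $\geq c\,(2^ir_0)^n$ coming from $\textbf{AVR}>0$, obtaining $\int_{\text{shell}}R^k\geq c\,(2^ir_0)^{n-2k}\to\infty$. The paper instead feeds Proposition \ref{p3.1} (the Sobolev-based bound $\int_{B_{2r}}R\geq C|B_r|^{(n-2)/n}$) into H\"older the other way,
\[
\int_{B_{2r}}R\leq\Big(\int_{B_{2r}}R^k\Big)^{1/k}|B_{2r}|^{(k-1)/k},
\]
so that $\big(\int_{B_{2r}}R^k\big)^{1/k}\geq C\,|B_r|^{(n-2)/n}\big/|B_{2r}|^{(k-1)/k}\sim r^{(n-2k)/k}\to\infty$, using that $(n-2)/n>(k-1)/k$ precisely when $k<\tfrac n2$. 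Both arguments are correct and produce the same exponent $n-2k$; yours is more elementary (pointwise ingredients only), while the paper's recycles the Sobolev machinery already packaged in Proposition \ref{p3.1}.

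For sharpness you and the paper both land on the Feldman--Ilmanen--Knopf shrinkers (Euclidean volume growth, quadratic curvature decay); the paper additionally inserts Lemma \ref{l3.10} ($|D(r)|'\leq Cr^{n-1}$ when $\textbf{AVR}>0$) to make the co-area computation clean. One correction to your list of examples: the $\mathbb{S}^2\times\mathbb{R}^2$--type shrinker does \emph{not} work, since there $R$ is a positive constant and hence $\int_M R^q=+\infty$ for every $q$. Stick with FIK or any genuinely asymptotically conical shrinker with $R\lesssim d^{-2}$.
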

\begin{proof}
	In view of Proposition \ref{p3.1}, \ref{p3.3} and \ref{p3.6}, to prove (\ref{3.16*}) and (\ref{3.17*}), we only need to consider the case of $1<k<\frac{n}{2}$ and our conclusions follow from a contradiction. If
	$$\int_{M^n}R^k<+\infty,$$
	then by Lemma \ref{l3.4*}, we know $\textbf{AVR}(M^n,g,f,p)>0$. Furthermore, by Proposition \ref{p3.1}, for sufficiently large $r>0$, there holds
	$$C\left|B_r\right|^{\frac{n-2}{n}}\leq\int_{B_{2r}}R\leq\left(\int_{B_{2r}}R^k\right)^{\frac{1}{k}}\left|B_{2r}\right|^{\frac{k-1}{k}}$$ for some constant $C>0$.
	Thus we have
	$$\left(\int_{B_{2r}}R^k\right)^{\frac{1}{k}}\geq\frac{C\left|B_r\right|^{\frac{n-2}{n}}}{\left|B_{2r}\right|^{\frac{k-1}{k}}}.$$
	Since $\textbf{A}=\textbf{AVR}(M^n,g,f,p)>0$, then $\left|B_r\right|\longrightarrow A\omega_nr^n$ as $r\longrightarrow+\infty$. Finally by the fact $\frac{n-2}{n}>\frac{k-1}{k}$, we conclude (\ref{3.16*}) for $1<k<\frac{n}{2}$.
	
	To see the sharpness of $k=\frac{n}{2}$, we need the following lemma.
	\begin{lemma}\label{l3.10}
		There exists a constant $C=C(n)$ such that for $r>C$, we have
		\begin{equation}\label{3.19}
			\left|D(r)\right|'\leq \frac{C\left|D(r)\right|}{r}.
		\end{equation}
		Especially, if $\textbf{AVR}(M^n,g,f,p)>0$, then for sufficiently large $r>0$, we have
		\begin{equation}\label{3.20}
			\left|D(r)\right|'\leq Cr^{n-1}.
		\end{equation}
	\end{lemma}
	\begin{proof}
		By (6.24) and (6.25) of \cite{MW1}, we have
		$$\left|D(r+1)\right|-\left|D(r)\right|\leq C_1\frac{\left|D(r)\right|}{r}$$
		whenever $r\geq C_1$ for some dimensional constant $C_1=C_1(n)$. Then by integral mean value theorem, there exists a $r_0\in\left[r,r+1\right]$ such that
		$$\left|D(r_0)\right|'\leq C_1\frac{\left|D(r)\right|}{r}.$$
		Moreover, it's easy to see that for each $r\geq C_1$, we can choose a $r_0$ depending continuously on $r$, hence for every $r_0>C_1+1$, there exists a $r=r(r_0)$ such that $r_0\in\left[r,r+1\right]$ and
		$$\left|D(r_0)\right|'\leq C_1\frac{\left|D(r)\right|}{r}\leq C_1\frac{\left|D(r_0)\right|}{r_0}\frac{r_0}{r}.$$
		Consequently, for all sufficiently large $r>C_1+1$, we have
		$$\left|D(r)\right|'\leq\frac{2C_1\left|D(r)\right|}{r}.$$
	\end{proof}
	With the help of Lemma \ref{l3.10}, now we can show for $k>\frac{n}{2}$, there exists a shrinker on which $R^k$ is integrable.
	By \cite{FIK}, there exist a Ricci shrinker which has Euclidean volume growth and quadratic curvature decay, then by Lemma \ref{l3.10} and co-area formula, we know $R^k$ is integrable.
\end{proof}

\begin{theorem}[\textbf{=Theorem \ref{t1.4*}, Gap Theorem}]\label{t3.10}
	For any compact or non-compact $(M^n,g,f,p)\in\mathcal{M}(\boldsymbol{\mu}_0)$ and $k\geq 1$, there exists a constant $\delta=\delta(n,\boldsymbol{\mu}_0,k)>0$ such that if
	\begin{equation}\label{3.18*}
		\int_{M^n}R^k\leq \delta,
	\end{equation}
	then it must be flat.
\end{theorem}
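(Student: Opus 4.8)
The plan is to reduce Theorem \ref{t3.10} to the exponent ranges already settled and then dispose of the single remaining range. For $k=1$, $k=\frac{n}{2}$ and $k>\frac{n}{2}$ the conclusion is exactly Corollaries \ref{c3.2}, \ref{c3.5} and \ref{c3.7} respectively, so the only new content lies in the range $1<k<\frac{n}{2}$ (which is nonempty only when $n\ge 3$); for this I would distinguish the non-compact and the compact case.

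In the non-compact case there is essentially nothing left to do: by Theorem \ref{t3.8}, a non-compact non-flat shrinker satisfies $\int_{M}R^k=+\infty$ for all $1\le k\le\frac{n}{2}$, so the hypothesis $\int_{M}R^k\le\delta$ already forces flatness as soon as $\delta<\infty$. Internally this is precisely where Lemma \ref{l3.4*} (finiteness of $\int_{M}R^k$ implies Euclidean volume growth) and Proposition \ref{p3.1} together with the numerical fact $\tfrac{n-2}{n}>\tfrac{k-1}{k}$ get used, but all of that is already available.

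The compact case is the one in which the smallness of $\delta$ is genuinely needed, and I expect it to be the routine-but-careful part. First note a compact shrinker is never flat: integrating the traced equation (\ref{1.2}) over $M$ gives $\int_{M}R=\tfrac{n}{2}|M|>0$. Then Hölder's inequality and (\ref{1.2}) yield
\[
\tfrac{n}{2}|M|=\int_{M}R\le\Big(\int_{M}R^k\Big)^{1/k}|M|^{1-1/k},
\]
so $\int_{M}R^k\ge\big(\tfrac{n}{2}\big)^k|M|\ge\big(\tfrac{n}{2}\big)^k|B_1(p)|$. Using the diameter bound $d(M)\ge 2$ of \cite{FS} (so that $B_1(p)$ is a genuine metric ball, not all of $M$) and the no-local-collapsing estimate \cite[Lemma 2.4]{LW2}, one has $|B_1(p)|\ge e^{-e^{n+8}}e^{\boldsymbol{\mu}}\ge e^{-e^{n+8}}e^{\boldsymbol{\mu}_0}$, hence $\int_{M}R^k\ge\big(\tfrac{n}{2}\big)^k e^{-e^{n+8}}e^{\boldsymbol{\mu}_0}$. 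Choosing $\delta=\delta(n,\boldsymbol{\mu}_0,k)$ strictly below this value (and finite) therefore excludes the compact possibility, and combining the two cases shows any shrinker with $\int_{M}R^k\le\delta$ is flat.

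The only real obstacle is the bookkeeping of constants: one must check that the threshold $\delta$ depends solely on $n$, $\boldsymbol{\mu}_0$ and $k$, which is transparent from the compact estimate and automatic in the non-compact regime since there the integral is simply $+\infty$. A genuinely uniform difficulty would appear only if one tried to handle the non-compact range $k>\frac{n}{2}$ directly — there $\int_{M}R^k$ may be finite, Proposition \ref{p3.6} yields only a shrinker-dependent lower bound, and one would need a compactness/contradiction argument over $\mathcal{M}(\boldsymbol{\mu}_0)$ (using the weak compactness of Ricci shrinkers) to extract a uniform bound; but that case is already recorded in Corollary \ref{c3.7}, so the reduction above is the most economical route.
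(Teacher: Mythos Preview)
Your proof is correct and follows essentially the same route as the paper. The paper's proof is simply a terser version: it invokes Theorem~\ref{t3.8} uniformly (rather than splitting into the ranges $k=1$, $1<k<\tfrac{n}{2}$, $k=\tfrac{n}{2}$, $k>\tfrac{n}{2}$) to conclude that a non-compact non-flat shrinker cannot satisfy \eqref{3.18*} for small $\delta$, and then dispatches the compact case by the H\"older/volume argument of Corollary~\ref{c3.5}, which is exactly the computation you wrote out explicitly for general $k$.
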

\begin{proof}
	By Theorem \ref{t3.8}, choosing a sufficiently small $\delta=\delta(n,\boldsymbol{\mu},k)>0$ in (\ref{3.18*}) implies $(M^n,g,f,p)$ must be compact or flat. Then our conclusion follows from the same arguments as in Corollary \ref{c3.5}.
\end{proof}

\section{$\varepsilon$-regularity theorems of local entropy and curvature}\label{sec4}
Due to B. Wang \cite{Wb,WB2}, the local functionals of Perelman are of vital importance to the study of local geometry of Riemannian manifolds. First of all, let us introduce the relevant definitions.  Let $\Omega$ be a connected, open subset of $(M^n,g)$ with smooth boundary. For $\varphi\in W^{1,2}_0(\Omega),\varphi\geq0$ and $\int_\Omega\varphi^{2}=1$, as usual, we define the local entropy functionals
	\begin{align*}
		&\boldsymbol{\mathcal{W}}(\Omega,g,\varphi,\tau):=-n-n\log\sqrt{4\pi\tau}+\int_\Omega\tau\left(R \varphi^2+4\left| \nabla\varphi\right|^2 \right)-2\varphi^2\log\varphi dV, \\
		&\boldsymbol{\mu}(\Omega,g,\tau):=\inf\limits_{\varphi}\boldsymbol{\mathcal{W}}(\Omega,g,\varphi,\tau), \\
		&\boldsymbol{\nu}(\Omega,g,\tau):=\inf\limits_{s\in\left(0,\tau\right] }\boldsymbol{\mu}(\Omega,g,s).
	\end{align*}
Especially, the local $\boldsymbol{\nu}$ entropy is closely related to the isoperimetric constant and volume ratio. For more information, we refer to \cite{Wb,WB2}. On Ricci shrinkers, the smallness of $\boldsymbol{\nu}$ implies a pseudo-locality theorem(cf. \cite[Theorem 24 and 25]{LW}). Besides, since Ricci shrinkers are natural generalizations of Einstein manifolds, hence one can expect that there are some good $\varepsilon$ regularities of curvature on Ricci shrinkers.
The following $\varepsilon$-regularity theorem is an application of the two-sided pseudo-locality theorem of Ricci shrinkers(cf. Li-Wang \cite[Theorem 1.6]{LW2}). 
\begin{theorem}[\textbf{=Theorem \ref{t1.5*}}]\label{t4.1}
For any $x\in (M^n,g,f,p)$ and $r>0$, there exists a constant $\varepsilon=\varepsilon(n)>0$ such that if 
\begin{equation*}
\boldsymbol{\nu}\left(B\left(x,r\right) ,r^2\right)\geq -\varepsilon,
\end{equation*}
 then
\begin{equation*}
\left|Rm\right|(x)\leq \varepsilon^{-1}r^{-2}.
\end{equation*}
\end{theorem}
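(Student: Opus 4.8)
The plan is to argue by contradiction using a blowup, reducing the statement to Li–Wang's two-sided pseudo-locality theorem \cite[Theorem 1.6]{LW2} on Ricci shrinkers. Suppose the conclusion fails. Then there is a sequence of Ricci shrinkers $(M_i^n,g_i,f_i,p_i)$, points $x_i\in M_i$, and radii $r_i>0$ such that
\begin{equation*}
\boldsymbol{\nu}\bigl(B(x_i,r_i),r_i^2\bigr)\geq -\tfrac{1}{i},\qquad \text{but}\qquad |Rm|_{g_i}(x_i)>i\,r_i^{-2}.
\end{equation*}
By scaling $g_i\mapsto r_i^{-2}g_i$ (which preserves the Ricci shrinker structure after rescaling $f_i$ and replacing $\lambda$ accordingly; alternatively one works with the unnormalized equation and renormalizes afterwards), we may assume $r_i=1$, so that $\boldsymbol{\nu}(B(x_i,1),1)\geq -1/i$ while $|Rm|_{g_i}(x_i)>i$. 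The first step is to record the scaling behavior of the local $\boldsymbol{\nu}$ entropy — it is scale invariant in the pair $(\text{metric},\tau)$ — so the hypothesis is genuinely preserved under the blowup.

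Next I would invoke the pseudo-locality mechanism: the almost-maximal value $\boldsymbol{\nu}(B(x_i,1),1)\geq -1/i\to 0$ says that $B(x_i,1)$ is, in the local entropy sense, nearly Euclidean, and Li–Wang's two-sided pseudo-locality theorem then gives a definite curvature bound $|Rm|_{g_i}\leq C(n)$ on a smaller ball $B(x_i,\theta)$ for a universal $\theta=\theta(n)\in(0,1)$, once $1/i$ is below the threshold $\varepsilon_0(n)$ appearing in that theorem. This is exactly the quantitative statement that small $\boldsymbol{\nu}$ deficit forces bounded curvature at the center, and it contradicts $|Rm|_{g_i}(x_i)>i\to\infty$ for $i$ large. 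In fact no limit need be extracted at all: the contradiction is immediate from the effective curvature estimate in \cite{LW2} applied to each sufficiently large $i$. If one prefers the blowup formulation, one extracts a pointed Cheeger–Gromov limit (using no-local-collapsing from \cite{LW} to get a noncollapsed limit $(M_\infty,g_\infty,x_\infty)$), observes that the limit has $\boldsymbol{\nu}\bigl(B(x_\infty,1),1\bigr)\geq 0$ hence by the rigidity case of the entropy (local entropy $\leq 0$ always, with equality iff Euclidean, cf. \cite{Wb,WB2,LW}) the limit is flat $\mathbb{R}^n$, while the rescaled curvatures blow up at $x_\infty$ — a contradiction with smooth convergence.

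The main obstacle is verifying that the hypothesis on the local $\boldsymbol{\nu}$ entropy feeds correctly into the precise form of the two-sided pseudo-locality theorem of \cite{LW2}: that theorem is stated with a specific smallness assumption (typically phrased via $\boldsymbol{\mu}$ or $\boldsymbol{\nu}$ on a ball, or via an isoperimetric/volume-ratio condition equivalent to it), so one must check that $\boldsymbol{\nu}(B(x,r),r^2)\geq -\varepsilon$ implies that assumption with the constants tracked, and that the conclusion there (curvature bound on a definite sub-ball, plus possibly a lower injectivity radius bound) is strong enough to yield $|Rm|(x)\leq \varepsilon^{-1}r^{-2}$ after unwinding the scaling. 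A secondary technical point is ensuring the blowup limit is taken in the class of Ricci shrinkers or, more safely, that one only uses the effective (non-limiting) estimate so that subtleties about the limiting soliton structure (potential function control via Lemma \ref{l2.1}, completeness of the limit) are avoided entirely; I expect the cleanest write-up is the purely effective one, invoking \cite[Theorem 1.6]{LW2} directly with explicit constants.
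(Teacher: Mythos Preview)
Your proposal has a genuine gap in both branches. The ``effective'' route assumes that Li--Wang's two-sided pseudo-locality \cite[Theorem 1.6]{LW2} takes as input a local $\boldsymbol{\nu}$-smallness hypothesis and directly outputs a uniform spatial curvature bound $|Rm|\le C(n)$ on a sub-ball. It does not: as invoked in the paper (see equations (\ref{4.9})--(\ref{4.10})), that theorem requires \emph{both} a volume-ratio lower bound \emph{and} an a priori curvature upper bound on a ball, and then propagates curvature control in the \emph{time} direction along the associated Ricci flow. The version that starts from small $\boldsymbol{\nu}$ alone is \cite[Theorem 25]{LW}, and as the paper's Remark after Theorem \ref{t4.1} points out, it only yields $|Rm|(x)\le C(n)D$ with $D=d(x,p)+\sqrt{2n}$---precisely the non-uniform bound Theorem \ref{t4.1} is meant to improve. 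So invoking pseudo-locality directly is circular.

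The alternative blowup route fails for the same underlying reason: to extract a smooth Cheeger--Gromov limit you need uniform curvature bounds on balls of definite size, but the contradiction hypothesis says $|Rm|_{g_i}(x_i)>i\to\infty$, so no such bound is available near $x_i$. No-local-collapsing alone gives only a Gromov--Hausdorff limit, which is not enough to run the $\boldsymbol{\nu}$-rigidity argument. What is missing is a Perelman-type point-picking step: after rescaling so that $|Rm|(x_k)=1$, the paper selects $y_k$ maximizing $|Rm|\cdot d^2(\cdot,\partial\Omega_k)$, which produces a point with $|Rm|(y_k)=1$ and $|Rm|\le 4$ on a ball of radius $L_k\to\infty$. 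Only \emph{then} does one have the curvature bound needed (together with the $\boldsymbol{\nu}$-derived volume bound and Cheeger--Gromov--Taylor injectivity radius estimate) to feed into \cite[Theorem 1.6]{LW2}, obtain space-time curvature control, apply Shi's estimates, and pass to a smooth Ricci-flow limit whose flatness (from $\boldsymbol{\nu}\ge 0$ rigidity) contradicts $|Rm|(y_k)$ being bounded away from zero. You correctly identified the endgame but skipped the mechanism that makes the compactness step legitimate.
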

\begin{remark}
Under similar conditions, \cite[Theorem 25]{LW} proved $\left|Rm\right|(x)\leq C(n)D$ where $D=d(x,p)+\sqrt{2n}$. 
\end{remark}

\begin{proof}
Our proof follows from a standard blowup argument. By scaling invariance of $\boldsymbol{\nu}\left(B\left(x,r\right) ,r^2\right)$, up to rescaling $g$ by $r^{-2}$, we may assume $r=1$. Suppose the statement were false, then there exists a sequence of non-flat Ricci shrinkers $\left( M^n_k,g_k,f_k,p_k\right)$ and points $x_k\in \left( M^n_k,g_k,f_k,p_k\right) $, where $k\in 1,2,3,\cdots$, such that
\begin{equation*}
\boldsymbol{\nu}_{g_k}\left(B_{g_k}(x_k,1),1\right)\geq -\frac{1}{k} \quad but\quad \left|Rm\right|_{g_k}(x_k)\geq k.
\end{equation*}
Let $P_k^2=\left|Rm\right|_{g_k}(x_k)\geq k$, and scale $g_k$ by $\hat{g}_k=P_k^2g_k$, then by the scaling invariance of $\boldsymbol{\nu}$ and $\left|Rm\right|d^2$,
\begin{equation}\label{4.3}
\boldsymbol{\nu}_{\hat{g}_k}\left(B_{\hat{g}_k}\left(x_k,P_k\right),P_k^2\right)\geq-\frac{1}{k}\quad and \quad \left|Rm\right|_{\hat{g}_k}(x_k)=1.
\end{equation}
Let $L_k>0$ be a sequence of numbers such that $P_k= 4L_k\longrightarrow+\infty$. Let $y_k\in B_{\hat{g}_k}(x_k,4L_k)=\Omega_k$ such that  $\left|Rm\right|_{\hat{g}_k}(\cdot)d^2_{\hat{g}_k}(\cdot,\partial\Omega_k)$ achieves its maximum $F_k^2$ in $\Omega_k$ at $y_k$. Hence
\begin{equation*}
F_k^2=\left|Rm\right|_{\hat{g}_k}(y_k)d^2_{\hat{g}_k}(y_k,\partial\Omega_k)\geq \left|Rm\right|_{\hat{g}_k}(x_k)d^2_{\hat{g}_k}(x_k,\partial\Omega_k)=16L_k^2.
\end{equation*}
Let $Q_k^2=\left|Rm\right|_{\hat{g}_k}(y_k)$, then after scaling $\hat{g}_k$ by $\bar{g}_k=Q_k^2\hat{g}_k$, 
\begin{equation}\label{4.4}
\boldsymbol{\nu}_{\bar{g}_k}\left(B_{\bar{g}_k}\left(x_k,P_kQ_k\right),P_k^2Q_k^2\right)\geq-\frac{1}{k},\quad \left|Rm\right|_{\bar{g}_k}(y_k)=1\quad and \quad d_{\bar{g}_k}\left( y_k,\partial\Omega\right)=F_k\geq 4L_k.
\end{equation}
Now for any $x\in B_{\bar{g}_k}\left(y_k,2L_k\right)$, it's easy to know
$$d_{\bar{g}_k}\left( x,\partial\Omega\right)\geq F_k-2L_k\geq 2L_k,$$
and consequently,
\begin{equation}\label{4.5}
 \left|Rm\right|_{\bar{g}_k}(x)\leq \frac{\left|Rm\right|_{\bar{g}_k}(y_k)d^2_{\bar{g}_k}(y_k,\partial\Omega_k)}{d^2_{\bar{g}_k}\left( x,\partial\Omega\right)}\leq\frac{F_k^2}{\left(F_k-2L_k\right)^2}\leq 4.
\end{equation}
Notice that 
$$Q_k^2=\left|Rm\right|_{\hat{g}_k}(y_k)\geq \frac{16L_k^2}{d^2_{\hat{g}_k}(y_k,\partial\Omega_k)}\geq 1\quad and \quad B_{\bar{g}_k}\left(y_k,2L_k\right)\subset B_{\bar{g}_k}\left(x_k,4L_kQ_k\right)= B_{\bar{g}_k}\left(x_k,P_kQ_k\right),$$
therefore by (\ref{4.4}) and the monotonicity of $\boldsymbol{\nu}$ in \cite[Proposition 2.1, 3.2]{Wb}, for all $\bar{x}\in B_{\bar{g}_k}\left(y_k,L_k\right)$,
\begin{equation}\label{4.6}
\boldsymbol{\nu}_{\bar{g}_k}\left( B_{\bar{g}_k}\left(\bar{x},L_k\right), L_k^2\right)\geq\boldsymbol{\nu}_{\bar{g}_k}\left(B_{\bar{g}_k} \left(y_k,2L_k\right), 4L^2_k\right)\geq-\frac{1}{k}.
\end{equation}
By (\ref{4.5}), for all $\bar{y}\in B_{\bar{g}_k}\left(\bar{x},L_k\right)$, we have $ \left|Rm\right|_{\bar{g}_k}(\bar{y})\leq4$, then in light of the estimate of lower bound of volume ratio in \cite[Theorem 3.3]{Wb}, for some constant $\theta=\theta(n)>0$ and all $0<r\leq 1$,
\begin{equation}\label{4.7}
\left|B_{\bar{g}_k}\left(\bar{x},r\right)\right|_{\bar{g}_k}\geq\theta\omega_nr^n.
\end{equation}
Then by (\ref{4.5}), (\ref{4.6}), (\ref{4.7}) and the estimate of lower bound of injective radius by Cheeger-Gromov-Taylor\cite{CGT}, there exists a uniform constant $c_0=c_0(n)$ such that
\begin{equation}\label{4.8}
inj_{\bar{g}_k}(y_k)\geq c_0.
\end{equation}

Now by reverting $\bar{g}_k$ to $g_k$ and setting $r_k=\frac{1}{P_kQ_k}$, we have
\begin{equation}\label{4.9}
\frac{\left|B_{g_k}\left(\bar{x},r_k\right)\right|_{g_k}}{\omega_nr_k^n}\geq\theta\quad and \quad \left|Rm\right|_{g_k}\leq4P_k^2Q_k^2=\frac{4}{r_k^2}\quad on \quad B_{g_k}(\bar{x},r_k).
\end{equation}
Let $\alpha_k=\min\left\lbrace \frac{1}{2},\theta\omega_n\right\rbrace $ and by the two-sided pseudo-locality theorem of Ricci shrinkers(cf. Li-Wang \cite[Theorem 1.6]{LW2}) and (\ref{4.9}), for some $\epsilon_k=\epsilon_k(n,\alpha_k)=\epsilon_k(n)$,
\begin{equation}\label{4.10}
\left|Rm\right|_{g_k}(y,t)\leq \left(\epsilon_kr_k\right)^{-2}, (y,t)\in B_{g_k}(\bar{x},(1-\alpha_k)r_k)\times\left( \left[-\left(\epsilon_kr_k\right)^2,\left(\epsilon_kr_k\right)^2\right]\bigcap(-\infty,1)\right) .
\end{equation}
Notice that $\epsilon_kr_k=\frac{\epsilon_k}{P_kQ_k}\longrightarrow0$, hence we can consider the following well-defined parabolic rescaled Ricci flows
\begin{equation}\label{4.11}
\tilde{g}_k(t)=\frac{P_k^2Q_k^2}{2\epsilon_k^2}g_k\left(\frac{2\epsilon_k^2t}{P_k^2Q_k^2}\right), g_k(0)=g_k, t\in\left[-\frac{1}{2},\frac{1}{2}\right],
\end{equation}
where $g_k(t)$ is the Ricci flow induced by the Ricci shrinker $\left( M^n_k,g_k,f_k,p_k\right)$.  By (\ref{4.10}), under $\tilde{g}_k(t)$, we have
\begin{equation}\label{4.12}
\left|Rm\right|_{\tilde{g}_k(t)}(\bar{x})\leq\frac{2\epsilon_k^2\left(\epsilon_kr_k\right)^{-2}}{P_k^2Q_k^2}=2.
\end{equation}
Recall that by our choice, $\bar{x}$ is an arbitrary point of $B_{\bar{g}_k(0)}\left(y_k,L_k \right)=B_{\tilde{g}_k(0)}\left(y_k,\frac{L_k}{\sqrt{2}\epsilon_k} \right)$. Note that $\tilde{g}_k(0)=\frac{\bar{g}_k}{2\epsilon_k^2}$, hence we can argue as to deduce (\ref{4.8}) to get $inj_{\tilde{g}_k}(y_k)\geq c_1(n)>0$.  Consequently, by the local version of Hamilton's compactness theorem as well as Shi's local derivative estimates \cite{Shi}, there exists a sub-sequence of $\tilde{g}_k(t)$ such that
\begin{equation}\label{4.14}
	\left(B_{\tilde{g}_k(0)}\left(y_k,\frac{L_k}{2\epsilon_k}\right),\tilde{g}_k(t),y_k\right)\xrightarrow{C^\infty-Cheeger-Gromov}\left(M^n_\infty,g_\infty(t),y_\infty\right).
\end{equation}
Especially, by (\ref{4.5}), $\left(M^n_\infty,g_\infty(0),y_\infty\right)$ has bounded curvature. Let $\overset{\infty}{\bigcup\limits_{i=1}}\Omega_i$ be an exhaustion of $M^n_\infty$ with smooth boundaries. Fixing $i$ and $\tau>0$, by (\ref{4.6}) and the continuity of $\boldsymbol{\mu}$ in \cite[Proposition 2.3 and Corollary 2.5]{Wb}, 
$$\boldsymbol{\mu}\left(\Omega_i,g_\infty,\tau\right)=\lim\limits_{j\longrightarrow+\infty}\boldsymbol{\mu}\left(\Omega_i,g_j,\tau\right)\geq\lim\limits_{j\longrightarrow+\infty}\boldsymbol{\nu}\left(\Omega_i,g_j,\tau\right)\geq0,$$
and hence 
$$\boldsymbol{\mu}_{g_\infty}\left(M^n_\infty,g_\infty(0),\tau\right)\geq0.$$
Thus , it's easy to see,
\begin{equation}
\boldsymbol{\nu}_{g_\infty}\left(M^n_\infty,g_\infty(0),\tau\right)=\inf\limits_{s\in\left( 0,\tau\right] }\boldsymbol{\mu}_{g_\infty}\left(M^n_\infty,g_\infty(0),\tau\right)\geq0.
\end{equation}
However, by the rigidity theorem of \cite[Proposition 3.2]{WB2}, $\left(M^n_\infty,g_\infty(0)\right)$ must be isometric to the standard Euclidean space which contradicts the fact of (\ref{4.4}) that $\left|Rm\right|_{\tilde{g}_k(0)}(y_k)=2\epsilon_k^2\left|Rm\right|_{\bar{g}_k}(y_k)=2\epsilon_k^2(n)>0$. Hence we complete the proof.
\end{proof}

By the same arguments as above, there holds another form of Theorem \ref{t4.1}.
\begin{corollary}
For any $x\in (M^n,g,f,p)$, $\delta>0$ and $r>0$, there exists a constant $\varepsilon=\varepsilon(n,\delta)>0$ such that if 
\begin{equation*}
	\boldsymbol{\nu}\left(B\left(x, \varepsilon^{-1}r\right), \varepsilon^{-2}r^2\right)\geq -\varepsilon,
\end{equation*}
then
\begin{equation*}
	\left|Rm\right|(x)\leq \delta r^{-2}.
\end{equation*}
\end{corollary}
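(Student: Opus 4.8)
The plan is to rerun the blowup scheme from the proof of Theorem~\ref{t4.1}, carefully tracking the constant $\delta$ through the point-picking and the rescalings. By the scaling invariance of $\boldsymbol{\nu}\bigl(B(x,\varepsilon^{-1}r),\varepsilon^{-2}r^2\bigr)$ and of $|Rm|\cdot r^2$, I may assume $r=1$. If the statement fails for some fixed $\delta>0$, then there are non-flat Ricci shrinkers $(M^n_k,g_k,f_k,p_k)$ and points $x_k$ with
\[
\boldsymbol{\nu}_{g_k}\bigl(B_{g_k}(x_k,k),k^2\bigr)\geq-\tfrac{1}{k},\qquad |Rm|_{g_k}(x_k)\geq\delta .
\]
Unlike in Theorem~\ref{t4.1}, no initial rescaling is performed: the hypothesis already lives at the large scale $k$, which is exactly the feature that makes the enlarged ball $B(x,\varepsilon^{-1}r)$ with time $\varepsilon^{-2}r^2$ the right object to start from.

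Next I would carry out the interior point-picking on $\Omega_k:=B_{g_k}(x_k,k)$, choosing $y_k\in\Omega_k$ where $|Rm|_{g_k}(\cdot)\,d^2_{g_k}(\cdot,\partial\Omega_k)$ attains its maximum $F_k^2$. Since $d_{g_k}(x_k,\partial\Omega_k)=k$, one gets $F_k^2\geq\delta k^2$; putting $L_k:=\tfrac{\sqrt\delta}{4}k\to+\infty$, this reads $F_k\geq 4L_k$. Set $Q_k^2:=|Rm|_{g_k}(y_k)$; because $d_{g_k}(y_k,\partial\Omega_k)\leq k$ one has $Q_k^2=F_k^2/d_{g_k}^2(y_k,\partial\Omega_k)\geq\delta$. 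After rescaling $\bar g_k:=Q_k^2 g_k$ we obtain $|Rm|_{\bar g_k}(y_k)=1$, $d_{\bar g_k}(y_k,\partial\Omega_k)=F_k\geq 4L_k$, and, exactly as in (\ref{4.5}), $|Rm|_{\bar g_k}\leq 4$ on $B_{\bar g_k}(y_k,2L_k)$. The point is that $4L_k^2=\tfrac{\delta}{4}k^2\leq\delta k^2\leq k^2Q_k^2$ while $B_{\bar g_k}(y_k,2L_k)\subset\Omega_k=B_{\bar g_k}(x_k,kQ_k)$, so the monotonicity of $\boldsymbol{\nu}$ (\cite[Proposition 2.1, 3.2]{Wb}) still propagates the hypothesis, giving $\boldsymbol{\nu}_{\bar g_k}\bigl(B_{\bar g_k}(\bar x,L_k),L_k^2\bigr)\geq-\tfrac1k$ for every $\bar x\in B_{\bar g_k}(y_k,L_k)$, just as in (\ref{4.6}).

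From this point the argument is identical to that of Theorem~\ref{t4.1}: the volume ratio lower bound of \cite[Theorem 3.3]{Wb}, the injectivity radius estimate of \cite{CGT}, the two-sided pseudo-locality of \cite[Theorem 1.6]{LW2}, and Hamilton's local compactness theorem together with Shi's derivative estimates \cite{Shi} produce a smooth pointed limit $(M^n_\infty,g_\infty(0),y_\infty)$ with bounded curvature, $|Rm|_{g_\infty}(y_\infty)>0$, and $\boldsymbol{\nu}_{g_\infty}(M^n_\infty,g_\infty(0),\tau)\geq 0$ for all $\tau>0$. The rigidity theorem \cite[Proposition 3.2]{WB2} then forces $(M^n_\infty,g_\infty(0))$ to be the standard Euclidean $\mathbb{R}^n$, contradicting $|Rm|_{g_\infty}(y_\infty)>0$.

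I expect the only genuinely new point — as opposed to repeating Theorem~\ref{t4.1} verbatim — to be the scale bookkeeping in the middle paragraph: one must check that after the point-picking and the single rescaling $\bar g_k=Q_k^2 g_k$ the auxiliary radius $L_k$ still diverges while $Q_k\geq\sqrt\delta$ stays bounded below, which is precisely why the $\boldsymbol{\nu}$-hypothesis must be imposed on the ball of radius $\varepsilon^{-1}r$ with time $\varepsilon^{-2}r^2$ rather than merely on $B(x,r)$ with time $r^2$. Everything downstream of this step is word-for-word the argument already given.
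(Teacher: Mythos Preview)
Your proposal is correct and is exactly the argument the paper has in mind: the paper's own proof consists of the single sentence ``By the same arguments as above, there holds another form of Theorem~\ref{t4.1},'' and you have written out precisely those arguments with the one necessary modification (no initial rescaling by $P_k$, since the hypothesis already lives on the large ball $B(x_k,k)$). Your scale bookkeeping --- in particular the checks $Q_k^2\geq\delta$ and $L_k^2\leq k^2Q_k^2$ ensuring that the $\boldsymbol{\nu}$ hypothesis propagates after a single rescaling --- is the only point requiring attention, and you have handled it correctly.
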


Especially, if $x=p$, we have the following gap property.
\begin{proposition}\label{p4.3}
For any Ricci shrinker $(M^n,g,f,p)$, there exists a constant $\varepsilon=\varepsilon(n)$ such that if
\begin{equation*}
\boldsymbol{\nu}\left(B\left(p,\varepsilon^{-1}\right),\varepsilon^{-2}\right)\geq -\varepsilon,
\end{equation*}
then it must be flat.
\end{proposition}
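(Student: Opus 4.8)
The plan is to argue by contradiction using a blowup/weak-compactness argument entirely parallel to the one used for Theorem \ref{t4.1} and Corollary \ref{c1.7}. Suppose no such $\varepsilon$ exists. Then there is a sequence of non-flat Ricci shrinkers $(M_i^n,g_i,f_i,p_i)$ with
$\boldsymbol{\nu}_{g_i}\bigl(B_{g_i}(p_i,i),i^2\bigr)\geq -\tfrac1i$.
First I would record that, since $p_i$ is a minimum point of $f_i$, we have a definite lower volume bound near $p_i$ (e.g.\ via \cite[Lemma 2.4]{LW2} or \cite[Lemma 2]{LW}), hence $\boldsymbol{\mu}_{g_i}\geq -c(n)$ for all large $i$; thus the whole sequence lies in a fixed class $\mathcal{M}(\boldsymbol{\mu}_0)$. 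This is the standing hypothesis needed to invoke the weak-compactness and the previous gap results.

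Next I would use the monotonicity of $\boldsymbol{\nu}$ over nested domains and scales (as in \cite{Wb}) together with the fact that $i\to\infty$ to pass the entropy lower bound to balls of fixed radius around $p_i$: for any fixed $r>0$ and $\tau>0$, $\boldsymbol{\nu}_{g_i}\bigl(B_{g_i}(p_i,r),\tau\bigr)\geq -\tfrac1i$ once $i$ is large. Then I would apply Theorem \ref{t4.1} (with $r=1$, say) to conclude $|Rm|_{g_i}(p_i)\leq C(n)$, and more generally, by the same argument at every point within bounded distance of $p_i$, a uniform curvature bound on $B_{g_i}(p_i,\rho)$ for each fixed $\rho$. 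Combined with the volume lower bound and the injectivity-radius estimate of Cheeger–Gromov–Taylor \cite{CGT} (exactly as in (\ref{4.8})), this yields non-collapsing and local smooth control, so by Hamilton's compactness theorem the pointed sequence $(M_i^n,g_i,p_i)$ subconverges in $C^\infty_{loc}$ to a smooth pointed limit $(M_\infty^n,g_\infty,p_\infty)$. Passing the entropy bound to the limit as in the proof of Theorem \ref{t4.1} (using continuity of $\boldsymbol{\mu}$ from \cite{Wb}) gives $\boldsymbol{\nu}_{g_\infty}(M_\infty,g_\infty,\tau)\geq 0$ for all $\tau>0$, so by the rigidity theorem \cite[Proposition 3.2]{WB2} the limit is the flat $\mathbb{R}^n$.

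To close the argument I need to rule out that the flat limit is compatible with the $(M_i)$ being non-flat shrinkers; this is the step that requires a little care, and is the main obstacle. The clean way is to observe that the limit being $\mathbb{R}^n$ forces $\textbf{AVR}(M_i,g_i)\to 1$: indeed on $B_{g_i}(p_i,\rho)$ we get $\omega_n^{-1}\rho^{-n}|B_{g_i}(p_i,\rho)|\to 1$ for each fixed $\rho$, and by Corollary \ref{c1.7} (or directly Theorem \ref{t1.6}, noting $\boldsymbol{A}_i\leq e^{\boldsymbol{\mu}_i}\leq 1$ and the local-to-AVR comparison embodied in $P_i$ being nonincreasing beyond $\sqrt{2(n+2)}$) this forces $(M_i,g_i,f_i,p_i)$ to be flat for $i$ large, contradicting the choice of the sequence. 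Alternatively, one invokes Theorem \ref{t1.2*} directly: the $C^\infty_{loc}$ convergence gives $\omega_n^{-1}\varepsilon^n|B_{1/\varepsilon}(p_i)|\geq 1-\varepsilon$ for $i$ large with $\varepsilon=\varepsilon(n)$ the constant there, hence flatness. Either route completes the proof; the subtlety is making sure the radius/scale bookkeeping in the entropy monotonicity is done correctly so that the fixed-radius entropy bound survives the limit and feeds into the earlier gap theorems.
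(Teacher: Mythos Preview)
Your argument is correct (modulo a minor wrinkle noted below) but takes a substantially longer route than the paper. You run a full compactness argument: Theorem~\ref{t4.1} gives uniform curvature bounds on each fixed ball around $p_i$, entropy plus curvature give volume and injectivity-radius bounds, Hamilton's compactness yields a smooth limit, the entropy passes to the limit forcing it to be $\mathbb{R}^n$, and then smooth convergence of volumes on a ball of fixed radius feeds into Corollary~\ref{c1.7} to force flatness of the $(M_i,g_i)$ for large $i$. This is valid and has the virtue of being methodologically uniform with the other $\varepsilon$-regularity proofs in the paper.

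The paper's proof is far shorter and exploits a structural feature you do not use: $p$ is a critical point of $f$ and hence a \emph{fixed point} of the diffeomorphisms $\psi^t$ generating the self-similar Ricci flow. Pseudo-locality (\cite[Corollary 8]{LW}) applied at $p$ gives $|Rm|(\psi^t(p)) \leq C(n)(1-t)$ for all $t \in [0,1)$; since $\psi^t(p)=p$ this reads $|Rm|(p)\leq C(n)(1-t)$, and letting $t\to 1$ yields $|Rm|(p)=0$, whence the shrinker is flat. No compactness, no limit space, no appeal to Corollary~\ref{c1.7}. Your approach trades this one-line use of self-similarity for the heavier machinery of Theorem~\ref{t4.1} plus the volume gap; the paper's approach is more direct but is specific to the base point $p$, whereas your compactness scheme is the same template used elsewhere in Section~\ref{sec4}.

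One small correction: your claim that ``$p_i$ being a minimum of $f_i$ gives a definite volume lower bound near $p_i$'' is not justified by the references you cite (those inequalities run the other way, bounding volume below by $e^{\boldsymbol{\mu}}$). However, this step is unnecessary for your argument: once Theorem~\ref{t4.1} supplies curvature bounds, \cite[Theorem 3.3]{Wb} together with the entropy hypothesis yields the volume lower bound directly, exactly as in (\ref{4.7}), and nothing downstream (neither Hamilton compactness nor Corollary~\ref{c1.7}) requires a global lower bound on $\boldsymbol{\mu}_i$.
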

\begin{proof}
This is a direct consequence of the pseudo-locality theorem of Ricci shrinkers(cf.\cite[section 10]{LW}). By \cite[Corollary 8]{LW}, for some small $\varepsilon=\varepsilon(n)>0$, there exists a constant $C=C(n)$ such that for all $t\in\left[ 0,1\right)$, 
$$\left|Rm\right|(\psi^t(p))\leq C(1-t).$$ 
However, by the self-similarity of Ricci shrinkers, $\psi^t(p)=p$ for all $t\in\left[ 0,1\right)$. Then setting $t\longrightarrow1$ implies $\left|Rm\right|(p)=0$, hence $(M^n,g,f,p)$ must be flat.
\end{proof}

As a consequence of Theorem \ref{t4.1}, we conclude a Harnark inequality for scalar curvature.
\begin{theorem}\label{t4.4}
For any $x\in(M^n,g,f,p)$ and $r>0$, there exists a constant $\varepsilon=\varepsilon(n)>0$ such that if
\begin{equation*}
	\boldsymbol{\nu}\left(B\left(x,3r\right),9r^2\right)\geq -\varepsilon,
\end{equation*}
then for any $y\in B\left(x,r\right)$,
\begin{equation*}
\frac{R(y)}{R(x)}\leq\frac{1}{\varepsilon}.
\end{equation*}
\end{theorem}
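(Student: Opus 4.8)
The plan is to derive this Harnack-type inequality for scalar curvature from the $\varepsilon$-regularity of Theorem \ref{t4.1} together with an iteration along geodesics. The idea is that wherever the local $\boldsymbol{\nu}$ entropy is nearly zero at scale comparable to $r$, Theorem \ref{t4.1} gives a uniform curvature bound $|Rm| \leq C r^{-2}$ on a ball, and on such a ball the Ricci flow generated by the shrinker has bounded geometry so that standard parabolic Harnack estimates for the scalar curvature (which satisfies $\partial_t R = \Delta R + 2|Ric|^2 \geq \Delta R$ under Ricci flow, and is strictly positive since the shrinker is non-flat, or else everything is flat and the statement is trivial) apply.

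More concretely, first I would dispose of the flat case (then $R \equiv 0$ and the inequality is vacuous after interpreting $0/0$, or one simply excludes it). Assuming non-flat, so $R > 0$ everywhere, I would use the monotonicity of $\boldsymbol{\nu}$ in the domain and scale (from \cite[Proposition 2.1, 3.2]{Wb}) to propagate the hypothesis $\boldsymbol{\nu}(B(x,3r),9r^2) \geq -\varepsilon$ to a bound $\boldsymbol{\nu}(B(z,r),r^2) \geq -\varepsilon$ for every $z \in B(x,2r)$; applying Theorem \ref{t4.1} at each such $z$ (with $\varepsilon$ chosen from that theorem) yields $|Rm|(z) \leq \varepsilon^{-1} r^{-2}$ on all of $B(x,2r)$. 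This curvature bound, combined with the no-local-collapsing estimate \cite[Theorem 3.3]{Wb} coming again from the $\boldsymbol{\nu}$ lower bound, gives a ball of definite size around $x$ (in the rescaled metric $r^{-2}g$) with bounded geometry and a uniform injectivity radius lower bound, exactly as in the proof of Theorem \ref{t4.1}.

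Next I would run the parabolic rescaled Ricci flow $\tilde g(t)$ induced by the shrinker on this region, noting that on a uniform time interval $[-c,c]$ and a uniform ball, $|Rm|_{\tilde g(t)}$ stays bounded by the two-sided pseudo-locality of Li-Wang \cite[Theorem 1.6]{LW2} just as in \eqref{4.10}--\eqref{4.12}. On this bounded-geometry spacetime neighborhood, the scalar curvature $R$ is a nonnegative supersolution of the heat equation, so the Li-Yau / Moser parabolic Harnack inequality gives $\sup_{B(x,r)} R \leq C(n) \inf_{B(x,r)} R$ after unscaling, provided $\varepsilon$ is taken small enough that the geometry constants are all controlled by $n$; absorbing $C(n)$ into $\varepsilon^{-1}$ finishes the estimate $R(y)/R(x) \leq \varepsilon^{-1}$ for $y \in B(x,r)$. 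Alternatively, and perhaps more cleanly, one can run the same blowup-by-contradiction scheme as in Theorem \ref{t4.1}: if the conclusion failed there would be shrinkers with $\boldsymbol{\nu}(B(x_k,3r),9r^2) \to 0$ yet $R(y_k)/R(x_k) \to \infty$; after rescaling and passing to a limit one obtains a complete ancient (in fact, by the rigidity \cite[Proposition 3.2]{WB2}, Euclidean) limit on which $R \equiv 0$, contradicting the ratio blow-up once one checks the curvature does not degenerate.

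The main obstacle I anticipate is controlling the scalar curvature \emph{from below} at the center $x$ uniformly — i.e., ensuring that in the rescaled picture $R(x)$ does not itself tend to zero faster than $R(y)$, so that the limiting argument produces a genuine contradiction rather than a $0/0$ indeterminacy. This is handled by the strong maximum principle for the scalar curvature under Ricci flow (either $R > 0$ everywhere, or the flow is flat), applied on the bounded-geometry region: a positive supersolution of the heat equation that is bounded above on a parabolic cylinder is bounded below by a definite fraction of its value anywhere in a smaller cylinder, which is precisely the quantitative Harnack statement. The remaining steps — propagating the $\boldsymbol{\nu}$ bound, invoking Theorem \ref{t4.1}, and setting up the rescaled flow — are routine repetitions of the arguments already carried out in the proof of Theorem \ref{t4.1}.
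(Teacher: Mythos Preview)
Your second route (blowup by contradiction) is exactly the paper's proof. The paper normalizes to $r=1$, takes a contradicting sequence, invokes Theorem~\ref{t4.1} to get $|Rm|_{g_k}\le C(n)$ on $B_{g_k}(x_k,2)$, so that $R_{g_k}(y_k)\le C(n)$ and hence $r_k:=R_{g_k}(x_k)\to 0$; it then rescales by $r_k^{-1}$ and reruns the compactness machinery of Theorem~\ref{t4.1} (now centered at $y_k$, on balls of radius $r_k^{-1/2}\to\infty$) to extract a Euclidean limit, deriving the contradiction from the scalar curvature at $y_k$ in the rescaled metric. So structurally your plan matches the paper's, and you have correctly isolated the one delicate point: in the limit both $R(x_k)$ and $R(y_k)$ tend to zero, and one must argue that the flat limit is nonetheless incompatible with $R(y_k)/R(x_k)\to\infty$. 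The paper handles this entirely within the blowup picture rather than by a separate Harnack estimate.

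Where your write-up needs tightening is the proposed resolution via ``Li--Yau/Moser Harnack for a positive supersolution of the heat equation''. A nonnegative supersolution satisfies only the \emph{weak} Harnack inequality (a lower bound on $\inf$ by an $L^p$ average); the two-sided bound $\sup\le C\inf$ requires in addition a subsolution property, i.e.\ that the source term $2|Ric|^2$ in $(\partial_t-\Delta)R=2|Ric|^2$ be bounded by $CR$. That is precisely the estimate $|Ric|^2\le C(n,K)\,R$ of Lemma~\ref{l5.2}, which the paper proves later under a \emph{global} curvature bound; localizing it by a cutoff brings in $|\nabla f|$ and hence the location of $x$, spoiling the uniformity $\varepsilon=\varepsilon(n)$. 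If you want to run your route~(a) you should make explicit how you obtain the subsolution side without this dependence; otherwise your route~(b), carried out as in the paper, is the cleaner path.
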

\begin{proof}
Up to rescaling the metric by $r^{-2}$, we may assume $r=1$. Suppose the statement were false, then there exists a sequence of non-flat Ricci shrinkers $\left( M^n_k,g_k,f_k,p_k\right)$ and points $x_k\in \left( M^n_k,g_k,f_k,p_k\right) $, where $k\in 1,2,3,\cdots$, such that for some $y_k\in B_{g_k}(x_k,k)$,
\begin{equation}\label{4.17}
	\boldsymbol{\nu}_{g_k}\left(B_{g_k}(x_k,3),9\right)\geq -\frac{1}{k}, \quad but\quad \frac{R_{g_k}(y_k)}{R_{g_k}(x_k)}\geq k.
\end{equation}
In light of Theorem \ref{t4.1}, for $k$ large enough, we have $R_{g_k}(y_k)\leq C(n)<+\infty$ and $\left| Rm\right|_{g_k}\leq C(n)$ on $B_{g_k}(x_k,2)$, and then by (\ref{4.17}), we know 
\begin{equation}\label{4.18}
r_k=R_{g_k}(x_k)\longrightarrow 0\quad as \quad k\longrightarrow+\infty.
\end{equation}
Rescaling the metrics $g_k$ by $\bar{g}_k=r_k^{-1}g_k$, we have
\begin{equation}\label{4.19}
		\boldsymbol{\nu}_{\bar{g}_k}\left( B_{\bar{g}_k}\left(y_k, r_k^{-\frac{1}{2}}\right) ,r_k^{-1}\right)\geq\boldsymbol{\nu}_{\bar{g}_k}\left(B_{\bar{g}_k}\left( x_k,2r_k^{-\frac{1}{2}}\right) ,4r_k^{-1}\right)\geq -\frac{1}{k},\quad 
\end{equation}
\begin{equation}\label{4.20}
 \left| Rm\right|_{g_k}\leq C(n)r_k^{-1}\quad on\quad B_{\bar{g}_k}\left(y_k, r_k^{-\frac{1}{2}}\right),\quad R_{\bar{g}_k}(y_k)\geq k.
\end{equation}
Then by the same arguments as in the proof of Theorem \ref{t4.1}, we can choose a sub-sequence of parabolic rescaled Ricci flows $\tilde{g}_k(t)$ such that
\begin{equation*}
	\left(B_{\tilde{g}_k(0)}\left(y_k,r_k^{-\frac{1}{2}}\right),\tilde{g}_k(t),y_k\right)\xrightarrow{C^\infty-Cheeger-Gromov}\left(M^n_\infty,g_\infty(t),y_\infty\right),
\end{equation*}
where the limit space is isometric to the standard Euclidean space which contradicts the fact that $ R_{\bar{g}_k}(y_k)\geq k$ of (\ref{4.20}). Hence the proof is complete.
\end{proof}
\begin{remark}
In fact, by the proof, if we replace $R(y)/R(x)$ by $\left|Rm\right|(y)/R(x)$, Theorem \ref{t4.4} is still valid.
\end{remark}

The following $\varepsilon$-regularity theorem is the key point of the next section. Notice that it is independent of the location of $x$. 
\begin{theorem}[\textbf{=Theorem \ref{t1.6*}}]\label{t5.4}
	For any $x\in(M^4,g,f,p)$ and $r\in\left(0,1\right]$, there exists a constant $\varepsilon=\varepsilon(\boldsymbol{\mu})>0$ such that if
	\begin{equation}\label{5.8}
		\int_{B\left(x,2r\right)}R^{2}dV\leq\varepsilon,\quad f(x)\geq \varepsilon^{-1}r^{-2},
	\end{equation}
	then
	\begin{equation}\label{5.9}
		\left|Rm\right|(x)\leq \varepsilon^{-1}r^{-2}.
	\end{equation}
\end{theorem}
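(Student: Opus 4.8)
\emph{Proof strategy (proposal).} The plan is to run the same blowup/contradiction argument as in the proof of Theorem~\ref{t4.1}, with the entropy lower bound there replaced by the scalar–curvature integral bound here, and with the concluding rigidity step supplied by the weak–compactness of four–dimensional Ricci shrinkers \cite[Theorem 3.4]{LW3} in place of the entropy rigidity of \cite{WB2}. Since $\int_{B(x,2r)}R^2\,dV$ is scale–invariant when $n=4$, and both $f(x)r^2$ and $|Rm|(x)r^2$ transform in the obvious way, I would set up a contradicting sequence: if the assertion failed with $\varepsilon=1/k$, there would exist non-flat $(M_k^4,g_k,f_k,p_k)\in\mathcal M(\boldsymbol\mu_0)$, points $x_k$ and radii $r_k\in(0,1]$ with
\begin{equation*}
\int_{B_{g_k}(x_k,2r_k)}R_k^2\,dV_k\le\frac1k,\qquad f_k(x_k)\ge\frac{k}{r_k^2},\qquad |Rm|_{g_k}(x_k)\ge\frac{k}{r_k^2},
\end{equation*}
so in particular $P_k^2:=|Rm|_{g_k}(x_k)\to\infty$ and a rescaling is unavoidable.

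Next I would carry out the point–selection and rescaling exactly as in the proof of Theorem~\ref{t4.1}, but performing the point–picking for $|Rm|\cdot d_{g_k}(\cdot,\partial\Omega_k)^2$ inside $\Omega_k=B_{g_k}(x_k,r_k)$, so that the selected centre $y_k$ stays in $B_{g_k}(x_k,r_k)\subset B_{g_k}(x_k,2r_k)$. After rescaling to unit curvature at $y_k$ one gets, as in \eqref{4.5}, uniformly bounded curvature on balls around $y_k$ whose $g_k$–radius is comparable to $r_k$; hence $R_k\le c(n)$ there, and the no–local–collapsing estimate for Ricci shrinkers \cite[Theorem 23]{LW} (which here plays the role the $\boldsymbol\nu$–lower bound played in Theorem~\ref{t4.1}) yields a volume lower bound $|B(\bar x,\rho)|\ge\theta(\boldsymbol\mu_0)\omega_n\rho^n$ at a definite scale. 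The two–sided pseudo–locality theorem \cite[Theorem 1.6]{LW2} then produces uniform curvature bounds on expanding parabolic neighbourhoods of the rescaled Ricci flows, the injectivity–radius estimate of \cite{CGT} applies, and Hamilton's compactness theorem together with Shi's estimates \cite{Shi} gives a pointed smooth limit Ricci flow $(M_\infty^4,g_\infty(t),y_\infty)$ of bounded curvature, with $t=0$ an interior time and $|Rm|_{g_\infty(0)}(y_\infty)>0$. The standing hypothesis $f_k(x_k)\ge kr_k^{-2}$ enters exactly here: it guarantees that $B_{g_k}(x_k,2r_k)$ lies in the ``far'' region of the shrinker, where the weighted density $e^{-f_k}$ is almost constant, which is what makes \cite{LW} and \cite{LW2} applicable in the required scale–invariant form and ensures the blown–up limit feels no influence of the base point $p_k$.

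Now the curvature hypothesis passes to the limit. For each fixed $L$, once $k$ is large the ball $B_{\bar g_k}(y_k,L)$ is contained, after unscaling, in $B_{g_k}(y_k,r_k)\subset B_{g_k}(x_k,2r_k)$ (this is the point of choosing $\Omega_k=B_{g_k}(x_k,r_k)$), so by the scale–invariance of $\int R^2$ in dimension four together with the smooth convergence,
\begin{equation*}
\int_{B_{g_\infty(0)}(y_\infty,L)}R_{g_\infty(0)}^2\,dV_{g_\infty(0)}=\lim_{k\to\infty}\int_{B_{\bar g_k}(y_k,L)}\bar R_k^2\,d\bar V_k=0.
\end{equation*}
Hence $R_{g_\infty(0)}\equiv0$; since nonnegativity of scalar curvature is preserved under the flow and $\partial_tR\ge\Delta R$, the strong maximum principle applied at the interior time $t=0$ forces $R_{g_\infty}\equiv0$ for $t\le0$, and then $0=\partial_tR=2|Ric_{g_\infty}|^2$ gives $Ric_{g_\infty(0)}\equiv0$. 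Thus $(M_\infty^4,g_\infty(0))$ is a complete, noncollapsed, \emph{Ricci–flat}, and \emph{non-flat} four–manifold arising as a blowup limit of Ricci shrinkers in $\mathcal M(\boldsymbol\mu_0)$. This is precisely what the weak–compactness theory of four–dimensional Ricci shrinkers \cite[Theorem 3.4]{LW3} rules out: such a limit would be a curvature bubble splitting off a limit shrinker, and a definite amount of curvature energy must concentrate at any such bubble — incompatible with what we have just established — so we reach a contradiction and the theorem follows.

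I expect the main obstacle to be exactly this last passage: upgrading the smallness of the \emph{scalar} curvature to rigidity of the \emph{full} curvature tensor, i.e.\ excluding a non-flat Ricci–flat bubble even though only $\int R^2$, and not $\int|Rm|^2$, is controlled. This is the point at which the absence of a priori curvature bounds on general Ricci shrinkers is felt, and where the recent weak–compactness theorem of \cite{LW3} is indispensable (and also the reason the result is confined to $n=4$, where $\int R^2=\int R^{n/2}$ is the natural scale–invariant quantity). A secondary, purely technical, difficulty is the bookkeeping of the chain of rescalings, so that the ball on which $\int R^2$ is small always contains the centre produced by the point–picking lemma, and the verification that \cite[Theorem 23]{LW} and \cite[Theorem 1.6]{LW2} do apply under the hypothesis $f(x)\ge\varepsilon^{-1}r^{-2}$.
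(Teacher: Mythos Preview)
Your overall framework matches the paper's: set up a contradicting sequence, perform point-picking as in Theorem~\ref{t4.1}, obtain a non-collapsed pointed limit Ricci flow with $R_{g_\infty(0)}\equiv 0$, upgrade to $Ric_{g_\infty(0)}\equiv 0$ via the evolution equation, and then seek a contradiction with $|Rm|_{g_\infty(0)}(y_\infty)>0$. (A minor difference: the paper gets the volume lower bound directly from Lemma~\ref{l5.5}, using only the smallness of $\int R^{n/2}$, rather than via \cite[Theorem~23]{LW}.)

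The genuine gap is in your last step. You write that \cite[Theorem~3.4]{LW3} ``rules out'' a non-flat Ricci-flat bubble because ``a definite amount of curvature energy must concentrate''. But any such energy is $\int|Rm|^2$, which you do not control; and a Ricci-flat bubble has $R\equiv 0$, so your hypothesis $\int R^2\to 0$ gives no contradiction whatsoever. Complete non-flat Ricci-flat four-manifolds (ALE gravitational instantons) certainly exist, and nothing in your argument excludes them as limits. Moreover, \cite[Theorem~3.4]{LW3} is a weak-compactness statement for sequences rescaled by $f_k$, yielding a steady soliton orbifold limit; it does not apply directly to your curvature-scale blowup, whose limit is merely an ancient Ricci flow.

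The paper closes this gap by a splitting argument driven by the potential function, and this is precisely where the hypothesis $f(x)\ge\varepsilon^{-1}r^{-2}$ is actually used (not, as you suggest, to make pseudo-locality applicable). First, \cite[Theorem~3.4]{LW3} is applied \emph{indirectly}, via Lemma~\ref{l5.8}, to show $|Rm|/f\le\delta$ on $B(x_k,1)$; in particular $R_{g_k}\le f_k/2$ there, so $|\nabla f_k|^2=f_k-R_k>0$ at $y_k$. One then passes the normalized potential $\check f_k=\tilde f_k/|\nabla_{\tilde g_k}\tilde f_k|(y_k)$ to the limit: since $|Hess_{g_k}f_k|$ is controlled by curvature, which by Lemma~\ref{l5.8} is $\le\delta f_k$, while $|\nabla f_k|\sim\sqrt{f_k}$ and the blowup factor $T_k\to\infty$, one computes $|Hess_{\tilde g_k}\check f_k|\to 0$. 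Hence $f_\infty$ is a nontrivial linear function on $(M_\infty^4,g_\infty(0))$, forcing an isometric splitting $\mathbb R\times N^3$ with $N^3$ Ricci-flat; in dimension three this means flat, contradicting $|Rm|_{g_\infty(0)}(y_\infty)>0$. You should replace your final paragraph with this splitting argument.
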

\begin{remark}
	Indeed, the above theorem localizes the main theories of Munteanu-Wang \cite{MW1*}, where the authors bounded the Riemannian curvature by global bounded scalar curvature. Specifically, given a point $x\in M^4$, we assume that $R(y)\leq K$ for any $y\in B\left( x,2r\right)$ and $f(x)\geq \varepsilon^{-1}r^{-2}$. By the non-expanding estimate of volume ratio in \cite[Theorem 1.2]{LW2}, we have $\left|B\left(x,2r\right)\right|\leq Cr^4$. Consequently, choosing a sufficiently small $r=r(\varepsilon, K)\leq 1$ ensures the validity of (\ref{5.8}), hence $	\left|Rm\right|(x)\leq C(\varepsilon,K)$.
\end{remark}

To prove this theorem, first we need to estimate the lower bound of volume ratio.
\begin{lemma}\label{l5.5}
	For any $x\in(M^n,g,f,p)$ and $r>0$, there exists two constants $\varepsilon=\varepsilon(n,\boldsymbol{\mu})>0$ and $C=C(n,\boldsymbol{\mu})>0$ such that if 
	\begin{equation*}
		\int_{B\left(x,r\right)}R^{\frac{n}{2}}dV\leq\varepsilon,
	\end{equation*}
	then
	\begin{equation*}
		\frac{\left|B(x,r)\right|}{\omega_nr^n}\geq C(n,\boldsymbol{\mu}).
	\end{equation*}
\end{lemma}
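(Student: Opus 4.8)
The plan is to turn the weighted Sobolev inequality of Lemma~\ref{l2.2} into an honest Euclidean Sobolev inequality on the ball $B(x,r)$ by absorbing the scalar-curvature term, and then to run a one-sided iteration on the dyadic volume ratios $\theta_j:=\omega_n^{-1}(r/2^{\,j})^{-n}\,|B(x,r/2^{\,j})|$.

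\emph{Step 1: upgrading the Sobolev inequality.} For any Lipschitz $u$ with compact support in $B(x,r)$, H\"older's inequality with exponents $\tfrac n2$ and $\tfrac n{n-2}$ gives
\begin{equation*}
\int_{M^n} R u^2=\int_{B(x,r)}R u^2\le\Big(\int_{B(x,r)}R^{\frac n2}\Big)^{\frac2n}\Big(\int u^{\frac{2n}{n-2}}\Big)^{\frac{n-2}n}\le\varepsilon^{\frac2n}\Big(\int u^{\frac{2n}{n-2}}\Big)^{\frac{n-2}n}.
\end{equation*}
Writing $(\ref{3.4})$ as $\big(\int u^{2n/(n-2)}\big)^{(n-2)/n}\le C_S\int(|\nabla u|^2+Ru^2)$ with $C_S=C_S(n,\boldsymbol\mu)$ and fixing $\varepsilon=\varepsilon(n,\boldsymbol\mu)$ small enough that $C_S\varepsilon^{2/n}\le\tfrac12$, I can absorb the $Ru^2$-term and obtain the Euclidean Sobolev inequality
\begin{equation*}
\Big(\int u^{\frac{2n}{n-2}}\Big)^{\frac{n-2}n}\le 2C_S\int|\nabla u|^2
\end{equation*}
valid for all such $u$. (If $M^n$ is compact with $B(x,r)=M^n$, then testing $(\ref{3.4})$ with $u\equiv1$ and using $\int_{M^n}R=\tfrac n2|M^n|$ from $(\ref{1.2})$ would force $(2/(nC_S))^{n/2}\le|M^n|\le(2/n)^{n/2}\varepsilon$, impossible once $\varepsilon<C_S^{-n/2}$; so I may always assume $B(x,r)\ne M^n$, and hence that every cut-off below has nonempty exterior.)

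\emph{Step 2: scale comparison and iteration.} For $0<s\le r$ I apply the inequality of Step~1 to the cut-off equal to $1$ on $B(x,s/2)$, vanishing outside $B(x,s)$, with $|\nabla u|\le 2/s$, which yields
\begin{equation*}
|B(x,s/2)|^{\frac{n-2}n}\le\frac{C_1}{s^2}\,|B(x,s)|,\qquad C_1=C_1(n,\boldsymbol\mu).
\end{equation*}
Rewriting this for $s=r/2^{\,j}$ in terms of the ratios $\theta_j$ gives a recursion of the form $\theta_{j+1}^{\frac{n-2}{n}}\le A\,\theta_j$ with $A=A(n,\boldsymbol\mu)$, equivalently $\psi_{j+1}\le\psi_j^{\,n/(n-2)}$ for $\psi_j:=A^{n/2}\theta_j$, hence $\psi_j\le\psi_0^{(n/(n-2))^{j}}$ for all $j$. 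If $\psi_0<1$ this forces $\psi_j\to0$, i.e.\ $\theta_j\to0$; but $\theta_j\to1$ as $j\to\infty$ because $g$ is smooth at $x$ (volumes of small geodesic balls are Euclidean to leading order). This contradiction shows $\psi_0\ge1$, that is
\begin{equation*}
\frac{|B(x,r)|}{\omega_n r^n}=\theta_0\ge A^{-n/2}=:C(n,\boldsymbol\mu),
\end{equation*}
which is the claim.

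\emph{Where the difficulty lies.} The step I expect to require the most care is closing the iteration in Step~2: its ``base'' is the volume ratio at an arbitrarily small scale, which a priori depends on the point $x$ and on the metric, not just on $n$ and $\boldsymbol\mu$. The resolution is that this base enters only through the limit $\theta_j\to1$ (equivalently, one may instead feed in the no-local-collapsing bound for Ricci shrinkers at a single small scale), so its precise value is immaterial and the final constant $A^{-n/2}$ depends on $n$ and $\boldsymbol\mu$ alone. The remaining bookkeeping — the exact H\"older exponents, the harmless closure/open issue for the cut-off at scale exactly $r$, and the compact sub-case — is routine.
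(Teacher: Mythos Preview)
Your argument is correct. The absorption in Step~1 and the dyadic recursion in Step~2 are sound, and your dichotomy $\psi_0\ge 1$ versus $\psi_j\to 0$ closes the loop because $\theta_j\to 1$.

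The paper arrives at the same conclusion by a shorter, non-iterative route. After the identical absorption step (your Step~1), instead of iterating over dyadic scales it picks $r_0\in[0,r]$ at which $s\mapsto s^{-n}|B(x,s)|$ attains its infimum. If $r_0=0$ the ratio is $\ge 1$ and one is done; if $r_0>0$, a single application of the cut-off inequality at scale $r_0$ together with the two consequences of minimality, $|B(x,r_0/2)|\ge 2^{-n}|B(x,r_0)|$ and $r^{-n}|B(x,r)|\ge r_0^{-n}|B(x,r_0)|$, immediately yields the bound. So the paper trades your infinite iteration (and the limiting input $\theta_j\to1$) for one evaluation at the extremal scale; your approach, on the other hand, avoids any discussion of whether and where the infimum is attained, and makes the dependence of the final constant on $n,\boldsymbol\mu$ completely explicit. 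Both rest on exactly the same Sobolev/H\"older core.
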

\begin{proof}
	The proof is similar to that of \cite[Theorem 23]{LW}. Let $r_0\in[0,r]$ such that $\inf\limits_{s\in[0,r]}s^{-n}\left|B(x,r)\right|$ is achieved at $r_0$. 
	
	If $r_0=0$, then by case 1 therein, we have
	\begin{equation*}
		\frac{\left|B(x,r)\right|}{\omega_nr^n}\geq1.
	\end{equation*}
	
	If $r_0>0$, we choose a non-increasing smooth cut-off function $\phi$ on real line such that $\phi=1$ on $\left(-\infty, \frac{1}{2}\right]$ and $\phi=0$ on $\left[0,+\infty\right).$ Let $u(\cdot)=\phi\left(\frac{d(\cdot,x)}{r_0}\right)$. Now by the Sobolev inequality (\ref{2.3}),
	\begin{align}
		\left|B\left( x,\frac{r_0}{2}\right) \right|^{\frac{n-2}{n}}&\leq C(n)e^{-\frac{2\boldsymbol{\mu}}{n}}\int\left(4\left|\nabla u\right|^2+Ru^2\right)\nonumber\\
		&\leq C(n)e^{-\frac{2\boldsymbol{\mu}}{n}}\left(r_0^{-2}\left|B\left( x,r_0\right) \right|+\int_{B(x,r_0)} Ru^2\right)\nonumber\\
		&\leq C(n)e^{-\frac{2\boldsymbol{\mu}}{n}}\left(r_0^{-2}\left|B\left( x,r_0\right) \right|+\left( \int_{B(x,r)} R^{\frac{n}{2}}\right)^{\frac{2}{n}}\left|B\left( x,r_0\right) \right|^{\frac{n-2}{n}}\right)\nonumber\\
		&\leq C(n)e^{-\frac{2\boldsymbol{\mu}}{n}}\left(r_0^{-2}\left|B\left( x,r_0\right) \right|+\varepsilon^{\frac{2}{n}}\left|B\left( x,r_0\right) \right|^{\frac{n-2}{n}}\right).\label{5.13}
	\end{align}
	By our choice of $r_0$, 
	\begin{equation}\label{5.14}
		\left|B\left( x,\frac{r_0}{2}\right) \right|\geq 2^{-n}\left|B\left( x,r_0\right) \right|,\quad r^{-n}\left|B\left(x,r\right)\right|\geq r_0^{-n}\left|B\left(x,r_0\right)\right|.
	\end{equation}
	As a consequence, combining (\ref{5.13}), (\ref{5.14}) and choosing an $\varepsilon(n,\boldsymbol{\mu})$ small enough completes the proof.
\end{proof}

Besides the above lemma, we also need to control the growth of $\left|Rm\right|$. 
\begin{lemma}\label{l5.8}
	For any $x\in(M^4,g,f,p)$, $r\in\left(0,1\right]$ and $\delta>0$, there exists a constant $\varepsilon=\varepsilon(\boldsymbol{\mu},\delta)>0$ such that if
	\begin{equation*}
		\int_{B\left(x,2r\right)}R^{2}dV\leq\varepsilon, \quad f(x)\geq \varepsilon^{-1} r^{-2},
	\end{equation*}
	then
	$$\sup\limits_{y\in B(x,r)}\frac{\left|Rm\right|}{f}(y)\leq \delta.$$
\end{lemma}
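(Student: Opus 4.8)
The plan is a point-picking and blow-up argument, parallel to the proofs of Theorems~\ref{t4.1} and~\ref{t4.4}. Suppose the assertion fails for some fixed $\delta>0$. Then there are Ricci shrinkers $(M_k^4,g_k,f_k,p_k)$ with $\boldsymbol{\mu}_{g_k}\geq\boldsymbol{\mu}$, points $x_k$ and radii $r_k\in(0,1]$, such that $\int_{B_{g_k}(x_k,2r_k)}R_{g_k}^2\,dV_{g_k}\to0$ and $a_k:=f_k(x_k)\geq k\,r_k^{-2}$, but $\sup_{B_{g_k}(x_k,r_k)}|Rm|_{g_k}/f_k\geq\delta$. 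Since $|\nabla\sqrt{f_k}|\leq\tfrac12$, $r_k\leq1$ and $a_k\to\infty$, the potential $f_k$ is comparable to $a_k$ on $B_{g_k}(x_k,2r_k)$; hence $\sup_{B_{g_k}(x_k,r_k)}|Rm|_{g_k}\gtrsim\delta a_k$, while $a_kr_k^2\to\infty$.

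First I would point-pick: let $y_k\in B_{g_k}(x_k,2r_k)$ maximize $|Rm|_{g_k}(\cdot)\,d_{g_k}(\cdot,\partial B_{g_k}(x_k,2r_k))^2$ and set $Q_k^2:=|Rm|_{g_k}(y_k)$. The usual comparison gives $Q_k^2\gtrsim\delta a_k\to\infty$, that $|Rm|_{g_k}\leq4Q_k^2$ on $B_{g_k}\!\big(y_k,\tfrac12 d_{g_k}(y_k,\partial B_{g_k}(x_k,2r_k))\big)$, and that this ball --- of $\hat g_k$-radius $L_k\to\infty$ after rescaling --- sits inside $B_{g_k}(x_k,2r_k)$. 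Rescaling $\hat g_k:=Q_k^2g_k$, the triple $(M_k,\hat g_k,f_k)$ satisfies $Ric_{\hat g_k}+\mathrm{Hess}_{\hat g_k}f_k=\tfrac1{2Q_k^2}\hat g_k$, so it is a shrinker whose shrinking constant tends to $0$, with $|Rm|_{\hat g_k}(y_k)=1$ and $|Rm|_{\hat g_k}\leq4$ on $B_{\hat g_k}(y_k,L_k)$; moreover, since $\int R^2\,dV$ is scale-invariant in dimension $4$, $\int_{B_{\hat g_k}(y_k,L_k)}R_{\hat g_k}^2\,dV_{\hat g_k}\leq\int_{B_{g_k}(x_k,2r_k)}R_{g_k}^2\,dV_{g_k}\to0$.

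Because the Sobolev inequality of Lemma~\ref{l2.2} is also scale-invariant, Lemma~\ref{l5.5} applies to $\hat g_k$ on balls about $y_k$ of radius up to $\sim L_k$ and yields the non-collapsing $|B_{\hat g_k}(y_k,s)|\geq C(\boldsymbol{\mu})\,\omega_4 s^4$; with the uniform curvature bound, Cheeger--Gromov--Taylor gives a lower injectivity-radius bound at $y_k$, exactly as in the proof of Theorem~\ref{t4.1}. Normalizing the potential by $\tilde f_k:=f_k-f_k(y_k)$, one checks that $|\nabla\tilde f_k|_{\hat g_k}^2=Q_k^{-2}(f_k-R_k)$ and $\mathrm{Hess}_{\hat g_k}\tilde f_k=\tfrac1{2Q_k^2}\hat g_k-Ric_{\hat g_k}$ are bounded on $B_{\hat g_k}(y_k,L_k)$; thus, passing to the associated rescaled Ricci flows and using Shi's local estimates (or elliptic regularity for the soliton system), a subsequence converges in the pointed $C^\infty$ Cheeger--Gromov sense to a complete $(M_\infty^4,g_\infty,f_\infty,y_\infty)$ with $|Rm|_{g_\infty}(y_\infty)=1$, $\textbf{AVR}(M_\infty)\geq C(\boldsymbol{\mu})>0$, and $R_{g_\infty}\equiv0$ (as $\int_KR_{g_\infty}^2=0$ for every compact $K$). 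Since the shrinking constant of $\hat g_k$ tends to $0$, $(M_\infty,g_\infty,f_\infty)$ is a steady gradient Ricci soliton, and a steady soliton with $R\equiv0$ has $|\nabla f_\infty|^2$ constant and, by Bochner, $\mathrm{Hess}\,f_\infty\equiv0$, hence is Ricci-flat. So $M_\infty$ is a complete non-flat Ricci-flat $4$-manifold of Euclidean volume growth.

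The hard part is to contradict this, since non-flat Ricci-flat $4$-manifolds of Euclidean volume growth (ALE instantons) do exist, so the vanishing of the scalar-curvature energy alone is not enough. I would remedy this by strengthening the scale-invariant bound carried along the blow-up from $\int R^2$ to $\int|Rm|^2$: using the dimension-$4$ Chern--Gauss--Bonnet and weighted Bochner identities on shrinkers --- the localized form of the Munteanu--Wang estimates alluded to in the remark after Theorem~\ref{t5.4} --- I expect an inequality of the shape $\int_{B(x,r)}|Rm|^2\,dV\leq C\big(\int_{B(x,2r)}R^2\,dV+E(x,r)\big)$ whose error $E(x,r)$ is controlled by $r^2/f(x)$ and hence $\to0$ under the standing hypothesis $f(x)\geq\varepsilon^{-1}r^{-2}$; feeding this into the blow-up forces $\int_{M_\infty}|Rm|^2=0$, so $M_\infty$ is flat, contradicting $|Rm|_{g_\infty}(y_\infty)=1$. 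Alternatively, the weak-compactness theorem for Ricci shrinkers of Li--Wang~\cite[Theorem~3.4]{LW3} can be used to place the bubble inside a limiting shrinker structure and rule it out there. A secondary technical point is the $C^\infty$ upgrade in the convergence: the drift $\nabla f_k$ in the rescaled picture is only bounded, not small, which is why one argues through the associated Ricci flow rather than directly with the elliptic soliton equation.
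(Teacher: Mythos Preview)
Your diagnosis of the obstruction is exactly right: scaling by $Q_k^2=|Rm|_{g_k}(y_k)$ and point-picking produces a smooth Ricci-flat limit of Euclidean volume growth, and ALE gravitational instantons show this alone yields no contradiction. The gap is in your fix~(a). The local inequality $\int_{B(x,r)}|Rm|^2\leq C\bigl(\int_{B(x,2r)}R^2+E(x,r)\bigr)$ you hope for is essentially the content of Theorem~\ref{t5.4}, whose proof \emph{uses} the present lemma; the remark after Theorem~\ref{t5.4} refers to that theorem, not to an independent estimate available here. So route~(a) is circular, and your argument as written is incomplete.

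The paper follows your route~(b), but with a different normalization that makes \cite[Theorem~3.4]{LW3} apply on the nose and avoids the instanton issue. There is no point-picking: one takes any $y_k\in B_{g_k}(x_k,1)$ with $|Rm|_{g_k}(y_k)\geq\delta f_k(y_k)$, sets $s_k:=f_k(y_k)\to\infty$, and rescales $\tilde g_k:=s_k g_k$, $\tilde f_k:=f_k-f_k(y_k)$. The Li--Wang weak-compactness then gives pointed-$\hat C^\infty$ convergence to a steady soliton \emph{orbifold} $(M_\infty,g_\infty,f_\infty)$ together with the built-in dichotomy $R_{g_\infty}>0$ unless $(M_\infty,g_\infty)\cong(\mathbb{R}^4,g_E)$. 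Since $\int R^2$ is scale-invariant in dimension~$4$ and tends to~$0$, the limit is forced to be flat $\mathbb{R}^4$ with no orbifold points, the convergence is smooth, and this contradicts $|Rm|_{\tilde g_k}(y_k)\geq\delta$. Two points are worth noting. First, the $f$-scaling keeps $|\nabla_{\tilde g_k}\tilde f_k|_{\tilde g_k}(y_k)$ of order~$1$, so the limiting potential is nontrivial and the steady-soliton dichotomy has force; under your $Q_k^2$-scaling the ratio $f_k(y_k)/Q_k^2\leq\delta^{-1}$ may tend to~$0$, the limiting potential may be constant, and one is left with merely ``Ricci-flat''. Second, the orbifold compactness of \cite{LW3} absorbs the lack of an a~priori curvature bound at this scale, so the point-picking step (and your secondary concern about upgrading to $C^\infty$ through the Ricci flow) are both unnecessary.
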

\begin{proof}
	Up to rescaling the metric by $r^{-2}$, we may assume $r=1$. Suppose the statement were false, then there exists a sequence of non-flat Ricci shrinkers $\left( M^4_k,g_k,f_k,p_k\right)$, points $x_k\in \left( M^4_k,g_k,f_k,p_k\right)$ and $y_k\in B(x_k,1)$, where $k\in 1,2,3,\cdots$, such that
	\begin{equation*}
		\int_{B_{g_k}(x_k,2)}R_{g_k}^2\leq \frac{1}{k},\quad f_k(y_k)\geq k \quad but\quad \left|Rm\right|_{g_k}(y_k)\geq \delta f_k(y_k).
	\end{equation*}	
	Here we use the fact that $f_k(x_k)$ and $f_k(y_k)$ are almost the same at infinity. If we set $s_k=f_k(y_k), \tilde{g}_k=s_kg_k, \tilde{f}_k=f_k-f_k(y_k)$, then by the weak-compactness of \cite[Theorem 3.4]{LW3}, we can find a convergent sub-sequence 
	$$\left(M^4_k,\tilde{g}_k,\tilde{f}_k,y_k\right)\xrightarrow{pointed- \hat{C}^\infty-Cheeger-Gromov}\left(M^4_\infty,g_\infty,f_\infty,y_\infty\right),$$
	where the limit space $(M_\infty, g_\infty,f_\infty,y_\infty )$ is a smooth Ricci steady soliton orbifold with isolated singularities. In addition, $R_{g_\infty}>0$ unless $(M_\infty, g_\infty)$ is isometric to standard $\left(R^4,g_E\right)$. In this situation, the blowup assumption $\int_{B_{g_k}(x_k,2)}R^2\leq \frac{1}{k}$ forces the limit space must be flat and contain no singularity, then the convergence is smooth everywhere, but this contradicts $\left|Rm\right|_{\tilde{g}_k}(y_k)\geq \delta$. Hence the proof is done. 
\end{proof}

Now we are ready to give the proof of Theorem \ref{t5.4}.
\begin{proof}[\textbf{Proof of Theorem \ref{t5.4}}]
	By scaling invariance of $\int_{B(x,r)}R^2dV$, up to rescaling $g$ by $r^{-2}$, we may assume $r=1$. Suppose the statement were false, then there exists a sequence of non-flat Ricci shrinkers $\left( M^4_k,g_k,f_k,p_k\right)$ and points $x_k\in \left( M^4_k,g_k,f_k,p_k\right)$, where $k\in 1,2,3,\cdots$, such that
	\begin{equation}\label{5.17*}
		\int_{B_{g_k}(x_k,2)}R_{g_k}^2\leq \frac{1}{k}, \quad f_k(x_k)\geq k, \quad but\quad \left|Rm\right|_{g_k}(x_k)\geq k.
	\end{equation}
	Notice that $\int_{B\left(x,r\right)}R^2$ is scaling invariant and the lower bound of volume ratio is given by Lemma \ref{l5.5}. Hence we can argue as in the proof of Theorem \ref{t4.1} to construct a sequence of parabolic rescaled Ricci flows 
	\begin{equation}\label{5.15*}
		\left(B_{\tilde{g}_k(0)}\left(y_k,\frac{L_k}{2\epsilon_k}\right),\tilde{g}_k(t),y_k\right)\xrightarrow{C^\infty-Cheeger-Gromov}\left(M^4_\infty,g_\infty(t),y_\infty\right), t\in \left[-\frac{1}{2}, \frac{1}{2}\right],
	\end{equation}
	where $y_k\in B_{g_k}(x_k,1)$. Especially, the blowup assumption $\int_{B_{g_k}(x_k,2)}R_{g_k}^2\leq \frac{1}{k}$ and $L_k\longrightarrow+\infty$ imply $R_{g_\infty(0)}\equiv0$.
	
	\textbf{Claim.} $\left(M^4_\infty,g_\infty(0),y_\infty\right)$ is flat.
	\begin{proof}[\textbf{Proof of the claim}]
		Since $\tilde{g}_k(t)$ is a rescaled flow of the Ricci shrinker flow $g_k(t)$, thus its scalar curvature $R(x,t)\geq0$ for all $x$, then $R_{g_\infty(t)}(x)\geq 0$ for all $x\in M^4_\infty$. On the other hand,
		$$\frac{\partial R_{g_\infty(t)}}{\partial t}=\Delta_{g_\infty(t)}R_{g_\infty(t)}+2\left| Ric\right|^2_{g_\infty(t)}.$$
		Now at $t=0$, $R_{g_\infty(0)}\equiv0$ implies $\Delta_{g_\infty(0)}R_{g_\infty(0)}$, hence by the maximum principle, the fact $R_{g_\infty(t)}\geq0$ forces that $\left| Ric\right|^2_{g_\infty(0)}\equiv0$, i.e. $\left(M^4_\infty,g_\infty(0),y_\infty\right)$ is Ricci-flat.
		
		For simplicity, in the following, we denote $\tilde{g}_k(0)$ by $\tilde{g}_k$ and $g_\infty(0)$ by $g_\infty$. By our constructions in the proof of Theorem \ref{t4.1} and the notations therein, $\tilde{g}_k=T_kg_k$ where $T_k=\varepsilon\left| Rm\right|_{g_k}(x_k)\left|Rm\right|_{\hat{g}_k}(y_k)\longrightarrow+\infty$ and $\varepsilon$ is some uniform constant. 
		
		In this situation, let $\tilde{f}_k=f_k-f_k(y_k)$, then direct computations give (cf. \cite[section 3]{LW3})
		$$Ric_{\tilde{g}_k}+Hess_{\tilde{g}_k}\tilde{f}_k=\frac{\tilde{g}_k}{2T_k}.$$
		Resulting from the picks of $y_k$, we have $\left|Rm\right|_{\tilde{g}}\leq 2$ on $B_{\tilde{g}}\left(y_k,\frac{L_k}{2\varepsilon_k(n)}\right)$ for all $k$ large enough and $L_k\longrightarrow+\infty$, hence $\left|Hess_{\tilde{g}_k}\tilde{f}_k\right|_{\tilde{g}}\leq C$.
		Moreover, we have
		\begin{align}\label{5.19'}
			\left|\nabla_{\tilde{g}_k}\tilde{f}_k\right|_{\tilde{g}_k}=\frac{\left|\nabla_{g_k}f_k\right|_{g_k}}{\sqrt{T_k}},\quad \left|Hess_{\tilde{g}_k}\tilde{f}_k\right|_{\tilde{g}_k}=\frac{\left|Hess_{g_k}f_k\right|_{g_k}}{T_k}.
		\end{align}
		Notice that by Lemma \ref{l5.8} with $r=1$ and the assumption (\ref{5.17*}), if we choose a uniform $\delta>0$ small enough such that $R_{g_k}\leq f_k/2$ on $B_{g_k}(y_k,1)$, then $\left|\nabla_{\tilde{g}_k}\tilde{f}_k\right|_{\tilde{g}_k}(y_k)>0$ for all $k$  large enough. Next we consider the function
		$$\check{f}=\frac{\tilde{f}_k}{\left|\nabla_{\tilde{g}_k}\tilde{f}_k\right|_{\tilde{g}_k}(y_k)}.$$
		 By (\ref{5.19'}) and the soliton equations (\ref{1.1}), (\ref{1.3}),
		\begin{equation*}
			\left|Hess_{\tilde{g}_k}\check{f}_k\right|_{\tilde{g}_k}=\frac{\left|Hess_{g_k}f_k\right|_{g_k}}{\sqrt{T_k}\left|\nabla_{g_k}f_k\right|_{g_k}(y_k)}=\left(\frac{1-R_{g_k}+\left|Ric_{g_k}\right|_{g_k}^2}{T_k\left(f_k-R_{g_k}\right)(y_k)}\right)^{\frac{1}{2}}.
		\end{equation*}
	As a consequence, by Lemma \ref{l5.8}, the fact that $\left|Ric_{g_k}\right|_{g_k}^2\leq C \left|Rm_{g_k}\right|_{g_k}$ and $T_k\longrightarrow+\infty$, we arrive at
		$$\left|Hess_{\tilde{g}_k}\check{f}_k\right|_{\tilde{g}_k}\longrightarrow0.$$
		In summary, for all $k$ large enough, 
		$$\check{f}_k(y_k)=0,\quad \left|\nabla_{\tilde{g}_k}\check{f}_k\right|_{\tilde{g}_k}(y_k)=1,\quad \left|Hess_{\tilde{g}_k}\check{f}_k\right|_{\tilde{g}_k}\longrightarrow0\quad on\quad B_{\tilde{g}}\left(y_k,\frac{L_k}{2\varepsilon_k(n)}\right).$$
		Accordingly, under the smooth convergence of (\ref{5.15*}), we can find a nontrivial smooth limit function $f_\infty$ on $\left( M^4_\infty, g_\infty\right)$ such that it is 
		linear, hence $(M^4_\infty,g_\infty)$ splits isometrically as $\mathbb{R}\times N^3$ where $N^3$ is Ricci flat. However, it is well-known that the sectional curvature of any 3 dimensional Einstein manifold is constant, therefore $\left(M^n_\infty,g_\infty,f_\infty,y_\infty\right)$ must be flat.
	\end{proof}
	
	According to our choice of $y_k$ which is the same as in the proof of Theorem \ref{t4.1}, for some uniform $\epsilon$, we have $$\left|Rm\right|_{\tilde{g}_k}(y_k)=2\epsilon_k^2\left|Rm\right|_{\bar{g}_k}(y_k)=2\epsilon_k^2=2\epsilon^2>0,$$
	which contradicts the fact that $\left(M^4_\infty,g_\infty,f_\infty,y_\infty\right)$ is flat. Then we complete the proof the proof of Theorem \ref{t5.4}.
\end{proof}

By the way, according to the same arguments as in Theorem \ref{t4.4}, the Harnark inequality for scalar curvature also holds true under the condition of Theorem \ref{t5.4}. In summary, we have
\begin{corollary}\label{c5.9}
	For any $x\in(M^4,g,f,p)$, there exists a uniform $\varepsilon>0$ and such that if 
	\begin{equation*}
		\int_{B\left(x,3\right)}R^2\leq\varepsilon,
	\end{equation*}
	then for any $y\in B\left(x,1\right)$,
	\begin{equation*}
		\frac{R(y)}{R(x)}\leq\frac{1}{\varepsilon}.
	\end{equation*}
\end{corollary}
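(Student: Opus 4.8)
The plan is to argue by contradiction via a blow-up, running the same scheme as Theorem~\ref{t4.4} but feeding in Theorem~\ref{t5.4} in place of Theorem~\ref{t4.1}. After scaling by $r^{-2}$ we may take $r=1$; suppose the estimate fails, so there are non-flat Ricci shrinkers $(M_k^4,g_k,f_k,p_k)$, points $x_k$, and $y_k\in B_{g_k}(x_k,1)$ with $\int_{B_{g_k}(x_k,3)}R_{g_k}^2\le\frac1k$ while $R_{g_k}(y_k)/R_{g_k}(x_k)\ge k$. The first step is to convert the smallness of $\int R^2$ into a uniform bound on $|Rm|$ near $x_k$: for $y\in B_{g_k}(x_k,1)$ we have $B_{g_k}(y,2)\subset B_{g_k}(x_k,3)$, hence $\int_{B_{g_k}(y,2)}R_{g_k}^2\le\frac1k\le\varepsilon_0$ for $k$ large; using the normalization $f(x)\ge\varepsilon^{-1}$ together with Lemma~\ref{l2.1} (which transfers this lower bound on $f$, up to adjusting $\varepsilon_0$, to all of $B_{g_k}(x_k,1)$), Theorem~\ref{t5.4} with $r=1$ then yields $|Rm|_{g_k}(y)\le\varepsilon_0^{-1}$. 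So $|Rm|_{g_k}\le C(\boldsymbol{\mu})$ on $B_{g_k}(x_k,1)$; in particular $R_{g_k}(y_k)\le C(\boldsymbol{\mu})$, and therefore $R_{g_k}(x_k)\le C(\boldsymbol{\mu})/k\to0$.

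From here the argument is that of Theorem~\ref{t4.4}. Since $\int_{B(x,r)}R^2$ is scale invariant in dimension $4$, Lemma~\ref{l5.5} supplies a uniform lower bound for the volume ratio at all small scales; combining this with the curvature bound, the Cheeger--Gromov--Taylor injectivity radius estimate \cite{CGT}, Hamilton's local compactness theorem, and Shi's local derivative estimates \cite{Shi}, and carrying out the point-picking and parabolic rescaling as in the proof of Theorem~\ref{t4.1}, one extracts a subsequence of rescaled flows converging in the pointed $C^\infty$-Cheeger--Gromov sense to a complete limit $\left(M_\infty^4,g_\infty(t),y_\infty\right)$. The blow-up hypothesis $\int_{B_{g_k}(x_k,3)}R_{g_k}^2\le\frac1k\to0$, together with the scale invariance of $\int R^2$, forces $R_{g_\infty}\equiv0$; since the limit arises from rescaled Ricci shrinkers, this in turn gives $Ric_{g_\infty}\equiv0$, and then the residual identity $Hess\, f_\infty=\frac12 g_\infty$ (or the steady-soliton structure when the rescaling is unbounded) shows $(M_\infty,g_\infty(0))$ is flat, i.e. isometric to $(\mathbb{R}^4,g_E)$. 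This contradicts the normalization, which keeps $|Rm|$ equal to a fixed positive constant at $y_\infty$; equivalently, the scale-invariant ratio $R_{g_k}(y_k)/R_{g_k}(x_k)\ge k$ cannot pass to a flat limit. As noted after Theorem~\ref{t4.4}, replacing $R(y)$ by $|Rm|(y)$ gives the bound for $|Rm|(y)/R(x)$ as well.

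I expect two points to require care. The first is that Theorem~\ref{t5.4} only applies when $f(x_k)$ is large, so the statement must be read with the extra hypothesis $f(x)\ge\varepsilon^{-1}$ of Theorem~\ref{t5.4}; in the remaining regime, when $f_k(x_k)$ stays bounded along the sequence, $d(x_k,p_k)$ is bounded by Lemma~\ref{l2.1} and one replaces the curvature bound above by the weak-compactness of $4$-dimensional Ricci shrinkers \cite[Theorem~3.4]{LW3}, getting a limiting shrinker orbifold with $\int R_{g_\infty}^2=0$, hence $R_{g_\infty}\equiv0$, hence flat with no singular points by the strong maximum principle, so the convergence is smooth near $\lim x_k$ and $R_{g_k}(y_k)\to0$ contradicts $R_{g_k}(y_k)\ge kR_{g_k}(x_k)$ after one rescales by $R_{g_k}(x_k)$. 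The second, and the genuine obstacle, is ensuring the blow-up limit is honestly \emph{flat} rather than merely Ricci-flat---that is, ruling out a nontrivial Ricci-flat bubble, which the condition $\int R^2\to0$ alone does not detect---for which the residual soliton structure inherited by the limit is the decisive ingredient, and controlling it when the rescaling factor is unbounded is the most delicate part of the proof.
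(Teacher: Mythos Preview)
Your proposal is correct and follows the paper's own approach: the paper's entire argument is the one-line remark that the contradiction scheme of Theorem~\ref{t4.4} goes through verbatim once Theorem~\ref{t5.4} replaces Theorem~\ref{t4.1}, with the Claim inside the proof of Theorem~\ref{t5.4} (the splitting via the normalized potential $\check f_k$) supplying flatness of the blow-up limit rather than mere Ricci-flatness. You are also right that the corollary is meant to be read ``under the condition of Theorem~\ref{t5.4}'', i.e.\ with the hypothesis $f(x)\ge\varepsilon^{-1}$ in force, so the separate treatment you sketch for bounded $f_k(x_k)$ via \cite[Theorem~3.4]{LW3} is a sensible addition but not required for the statement as intended.
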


Recall that $0\geq\boldsymbol{\nu}\left(B(x,r),r^2\right)$$\longrightarrow0$ as $r\longrightarrow0$(cf. \cite[section 2]{Wb}). Generally, it's hard to find a constant $r_0>0$ such that $\boldsymbol{\nu}\left(B(x,r_0),r_0^2\right)\geq -\varepsilon_0$ for some given small $\varepsilon_0>0$. However, the smallness of $\boldsymbol{\nu}\left(B(x,r),r^2\right)$ is able to reveal the local geometric properties of a manifold such as the boundedness of curvature in our Theorem \ref{t1.5*} and the pseudo-locality theorem of \cite[Theorem 24]{LW}. Moreover, once we know the curvature is bounded by Theorem \ref{t1.5*}, then by the equivalent relations of \cite[Theorem 5.9]{WB2}, we can further control the local isoperimetric constant and volume ratio. In a word, it is significant to control $\boldsymbol{\nu}\left(B(x,r),r^2\right)$ for a given point $x$. Now combining the arguments as to prove Theorem \ref{t4.1} and \ref{t5.4}, we have
\begin{theorem}\label{t5.10*}
	For any $x\in(M^4,g,f,p)$, $\delta>0$ and $r\in\left(0,1\right]$, there exists a  constant $\varepsilon=\varepsilon(\boldsymbol{\mu},\delta)>0$ such that if
	\begin{equation*}
		\int_{B\left(x,\varepsilon^{-1}r\right)}R^{2}dV\leq\varepsilon,\quad f(x)\geq \varepsilon^{-1}r^{-2},
	\end{equation*}
	then
	\begin{equation*}
		\boldsymbol{\nu}\left(B\left(x,r\right) ,r^2\right)\geq -\delta.
	\end{equation*}
\end{theorem}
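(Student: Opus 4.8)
The plan is to run a single contradiction/compactness argument combining the curvature estimate of Theorem~\ref{t5.4}, the non-collapsing of Lemma~\ref{l5.5}, the two-sided pseudo-locality used as in the proof of Theorem~\ref{t4.1}, and the flat rigidity of the blow-up limit as in the proof of Theorem~\ref{t5.4}. By scaling we may assume $r=1$. If the statement fails, there are non-flat shrinkers $(M_k^4,g_k,f_k,p_k)$ with $\boldsymbol{\mu}_{g_k}\ge\boldsymbol{\mu}$ and points $x_k$ such that $\int_{B_{g_k}(x_k,k)}R_{g_k}^2\,dV\le\frac1k$, $f_k(x_k)\ge k$, but $\boldsymbol{\nu}_{g_k}(B_{g_k}(x_k,1),1)<-\delta$.

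\noindent First I would produce uniform local control near $x_k$. Fix $\rho>0$. For $k$ large, every $y\in B_{g_k}(x_k,\rho)$ satisfies $f_k(y)\ge\varepsilon^{-1}$ (by Lemma~\ref{l2.1}, since $f_k(x_k)\to\infty$) and $\int_{B_{g_k}(y,2)}R_{g_k}^2\le\frac1k\le\varepsilon$ (as $B_{g_k}(y,2)\subset B_{g_k}(x_k,k)$), so Theorem~\ref{t5.4} gives $|Rm|_{g_k}\le K=K(\boldsymbol{\mu})$ on $B_{g_k}(x_k,\rho)$, while Lemma~\ref{l5.5} gives $|B_{g_k}(x_k,t)|\ge c_0(\boldsymbol{\mu})\,\omega_4 t^4$ for every fixed $t>0$. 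Feeding these bounds into the two-sided pseudo-locality theorem of \cite{LW2} exactly as in the proof of Theorem~\ref{t4.1} yields a uniform $\tau_0>0$ and a curvature bound for the Ricci flow $g_k(t)$ of the shrinker (which exists for all $t<1$) on $B_{g_k}(x_k,\rho/2)\times[-\tau_0,\tau_0]$. Since the limit is non-collapsed, the injectivity radius is bounded below \cite{CGT}, and by Hamilton's local compactness and Shi's estimates \cite{Shi}, after passing to a subsequence,
$$\bigl(B_{g_k(0)}(x_k,\rho/4),\,g_k(t),\,x_k\bigr)\ \xrightarrow{\ C^\infty\ }\ \bigl(M_\infty^4,\,g_\infty(t),\,x_\infty\bigr),\qquad t\in[-\tau_0,\tau_0],$$
with $(M_\infty,g_\infty(0))$ complete, non-collapsed and of Euclidean volume growth (by the bounds above, exhausting $\rho\to\infty$).

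\noindent Next I would show $(M_\infty,g_\infty(0))\cong(\mathbb{R}^4,g_E)$. The hypothesis $\int_{B_{g_k}(x_k,k)}R_{g_k}^2\le\frac1k\to0$ forces $R_{g_\infty(0)}\equiv0$; since the scalar curvature of a shrinker's flow is nonnegative, $R_{g_\infty(t)}\ge0$ on $[-\tau_0,\tau_0]$, so $R(\cdot,0)$ is a space-time minimum and $\partial_t R=\Delta R+2|Ric|^2$ forces $|Ric|^2_{g_\infty(0)}\equiv0$ (this is the argument in the Claim in the proof of Theorem~\ref{t5.4}). Moreover, setting $\phi_k=(f_k-f_k(x_k))/\sqrt{f_k(x_k)}$ one has $\phi_k(x_k)=0$, $|\nabla_{g_k}\phi_k|^2=(f_k-R_k)/f_k(x_k)\to1$ and $|\mathrm{Hess}_{g_k}\phi_k|=|\frac12 g_k-Ric_{g_k}|/\sqrt{f_k(x_k)}\to0$ uniformly on $B_{g_k}(x_k,\rho)$ (using the curvature bound $K$); hence the limit carries a function $\phi_\infty$ with $|\nabla\phi_\infty|\equiv1$ and $\mathrm{Hess}_{g_\infty(0)}\phi_\infty\equiv0$, so $(M_\infty,g_\infty(0))$ splits isometrically as $\mathbb{R}\times N^3$ with $N^3$ complete and Ricci flat, hence flat; being non-collapsed of Euclidean volume growth, $N^3=\mathbb{R}^3$ and thus $(M_\infty,g_\infty(0))=(\mathbb{R}^4,g_E)$.

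\noindent Finally, restricting the convergence to $t=0$ gives $(B_{g_k}(x_k,1),g_k,x_k)\to(B_{g_E}(0,1),g_E,0)$ in $C^\infty$. By the continuity of $\boldsymbol{\mu}(\cdot,\cdot,\sigma)$ under smooth convergence \cite{Wb} together with the domain-monotonicity $\boldsymbol{\mu}(\Omega_1,\cdot,\cdot)\ge\boldsymbol{\mu}(\Omega_2,\cdot,\cdot)$ for $\Omega_1\subset\Omega_2$, for any $\eta>0$ and all $\sigma\in(0,1]$ one obtains $\liminf_k\boldsymbol{\mu}(B_{g_k}(x_k,1),g_k,\sigma)\ge\boldsymbol{\mu}(B_{g_E}(0,1+\eta),g_E,\sigma)\ge\boldsymbol{\mu}(\mathbb{R}^4,g_E,\sigma)=0$, the convergence being uniform in $\sigma\in(0,1]$; hence $\liminf_k\boldsymbol{\nu}_{g_k}(B_{g_k}(x_k,1),1)\ge0$, contradicting $\boldsymbol{\nu}_{g_k}(B_{g_k}(x_k,1),1)<-\delta$. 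I expect the main difficulty to lie in the rigidity of the blow-up limit: one must check that the potentials, whose gradients blow up in the unrescaled picture, still yield a nontrivial parallel limit function after the normalization $\phi_k$, and that the maximum-principle step genuinely upgrades $R\equiv0$ to $Ric\equiv0$ on the limit; a secondary technical point is the uniformity in $\sigma\in(0,1]$ needed to pass the infimum defining $\boldsymbol{\nu}$ to the limit.
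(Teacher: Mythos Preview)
Your proposal is correct and follows essentially the same route as the paper. The paper's own proof is only a three-line sketch: after reducing to $r=1$, it invokes Theorem~\ref{t5.4} to get a uniform curvature bound on $B(x,\varepsilon^{-1}/2)$ and then says one ``can carry on the same blowup arguments as in the proof of Theorem~\ref{t5.4} and then deduce the limit space must be flat,'' leaving the final contradiction with $\boldsymbol{\nu}<-\delta$ implicit. You spell out exactly these steps (curvature bound from Theorem~\ref{t5.4}, non-collapsing from Lemma~\ref{l5.5}, pseudo-locality and compactness as in Theorem~\ref{t4.1}, and the Ricci-flat/splitting argument from the Claim in Theorem~\ref{t5.4}), and you are more explicit than the paper about the last step, namely passing the entropy inequality to the limit via the continuity and domain-monotonicity of $\boldsymbol{\mu}$ from \cite{Wb}. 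The technical point you flag---uniformity in $\sigma\in(0,1]$ when taking the infimum defining $\boldsymbol{\nu}$---is genuine but harmless: for $\sigma$ bounded away from $0$ one uses the continuity of $\boldsymbol{\mu}$ in the metric, and for small $\sigma$ one uses that $\boldsymbol{\mu}(\Omega,g,\sigma)\to 0$ as $\sigma\to 0$ uniformly under the curvature and volume bounds already in hand (cf.\ \cite[Proposition~3.2]{Wb}); alternatively one can argue exactly as at the end of the proof of Theorem~\ref{t4.1}, first for each fixed $\tau$ and then taking the infimum.
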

\begin{proof}
	The proof is almost the same as that of Theorem \ref{t5.4} but much more simple. As before, we may assume $r=1$. By Theorem \ref{t5.4}, for $\varepsilon$ small enough, $\left|Rm\right|$ is bounded on $B\left(x,\frac{\varepsilon^{-1}}{2}\right)$, then we can carry on the same blowup arguments as in the proof of Theorem \ref{t5.4} and then deduce the limit space must be flat. Therefore we complete the proof.
\end{proof}

\section{Structure at infinity}
In this section, we turn to the study of local geometry at infinity. First we recall some corresponding definitions. A cone is a manifold $\left[0,+\infty\right)\times N$ endowed with a Riemannian metric $g_c=dr^2+r^2g_N$, where $\left(N,g_N\right)$ is a closed $n-1$ dimensional Riemannian manifold. Denote $N_S=(S,+\infty)\times N$ for $S\geq0$ and define the dilation by $\lambda$ to be the map $\rho_\lambda: N_0\rightarrow N_0$ given by $\rho_\lambda(s,t)=(\lambda s,t)$. A Riemannian manifold $(M^n,g)$ is said to be $C^k$ asymptotic to the cone $\left(N_0,g_c\right)$ if for some $S>0$, there is a diffeomorphism $\varphi:N_S\rightarrow M^n\setminus \Omega$ such that $\lambda^{-2}\rho_\lambda^*\varphi^*g\longrightarrow g_c$ in $C^k_{loc}\left(N_0,g_c\right)$, where $\Omega$ is a compact subset of $M^n$.

In this direction, Kotschwar-Wang \cite{KW} showed that a shrinking Ricci soliton with quadratic curvature decay must be asymptotic to a cone and any two shrinking Ricci solitons $C^2$ asymptotic to the same cone must be isometric, and later, Munteanu-Wang \cite{MW2} proved
\begin{theorem}\label{t5.1}
	Let $(M^n, g, f,p)$ be a Ricci shrinker of dimension $n$. Then
	
	(1) If $\left|Ric\right|\longrightarrow0$ at infinity, then it is $C^k$ asymptotic to a cone for all $k$.
	
	(2) If its curvature is bounded and assume the scalar curvature converges to zero at infinity. Then $(M^n, g, f,p)$ is $C^k$ asymptotic to a cone for all $k$.
\end{theorem}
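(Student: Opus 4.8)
The plan is to peel the theorem down to a single statement about a decay \emph{rate}. First, part (2) reduces to part (1): if $|Rm|\le\Lambda$ and $R\to0$ at infinity, then on unit balls (which have bounded geometry) $R$ solves the uniformly elliptic equation $\Delta_f R=R-2|Ric|^2$ whose right-hand side is $C^\alpha$-bounded — the $C^\alpha$ bound on $|Ric|^2$ coming from the Lichnerowicz-type elliptic system satisfied by $Ric$ on a shrinker — so $R$ is bounded in $C^{2,\alpha}_{loc}$; together with $R\to0$ this forces $\Delta_f R\to0$, hence $|Ric|^2=\tfrac12(R-\Delta_f R)\to0$. So it suffices to treat the hypothesis $|Ric|\to0$, and for that the target is \emph{quadratic curvature decay} $|Rm|\le C\rho^{-2}$ outside a compact set, where $\rho=2\sqrt f\sim d(\cdot,p)$ by Lemma \ref{l2.1}: given this, Kotschwar-Wang \cite{KW} yields that $(M^n,g)$ is asymptotic to a cone, and the upgrade to ``$C^k$ for all $k$'' follows by feeding the quadratic decay into the shrinker curvature estimates ($\Delta_f Rm=Rm+(\text{quadratic in }Rm)$, plus Shi's estimates for the Ricci flow generated by the soliton), which propagate polynomial decay to every $|\nabla^j Rm|$.

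It is worth seeing directly what rate is needed. Since $|\nabla f|^2=f-R$ and $R\to0$, one has $\nabla f\neq0$ for $f$ large, so the level sets $\Sigma_r=\{\rho=r\}$ are smooth closed hypersurfaces, and the normalized gradient flow of $\nabla f/|\nabla f|^2$ identifies them with a fixed $N^{n-1}$ (on each end separately). In the resulting coordinates $g=\varphi^2\,dr^2+g_r$ with $\varphi=|\nabla\rho|^{-1}\to1$, and the second fundamental form of $\Sigma_r$ is $\mathrm{II}=\bigl(\tfrac12 g-Ric\bigr)\big|_{T\Sigma_r}/|\nabla f|$ with $|\nabla f|\sim\rho/2$. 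Setting $G_r=r^{-2}g_r$ this gives
\[
\bigl|\partial_r G_r\bigr|_{G_r}\le \frac{C}{r}\Bigl(\sup_{\Sigma_r}R+\sup_{\Sigma_r}|Ric|\Bigr)+(\text{lower order}),
\]
so $G_r\to g_N$ in $C^0$ precisely when $\int^\infty r^{-1}\bigl(\sup_{\Sigma_r}(R+|Ric|)\bigr)\,dr<\infty$, after which the substitution $t=\int^r\varphi$ rewrites $g$ as $dt^2+t^2g_N+o(t^2)$. Thus everything comes down to bounding $\sup_{\Sigma_r}(R+|Ric|)$ by a negative power of $r$; and once $|Ric|=O(\rho^{-\delta})$ is in hand, the identity $\nabla R=2\,Ric(\nabla f)$ together with the elliptic estimates for $Ric$ and $Rm$ improves this to the quadratic decay demanded by \cite{KW}.

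The self-improvement $|Ric|\to0\ \Rightarrow\ |Ric|=O(\rho^{-\delta})$ is the main obstacle. The route I would take is a weighted-integral argument: starting from $\int_M|Ric|^2e^{-f}<\infty$ (obtained by integrating $\Delta_f R=R-2|Ric|^2$ against $e^{-f}$, the boundary terms being killed by $e^{-f}$ since $|Ric|$ is already bounded) and a Bochner-type identity for $|Ric|^2$ on a shrinker — which contains $2|\nabla Ric|^2+2|Ric|^2$ together with a term cubic in curvature — one derives a differential inequality for a suitable (possibly weighted) integral of $|Ric|^2$ over $D(r)$, the favorable sign being furnished by the drift term $-\langle\nabla f,\nabla\cdot\rangle$ whose contribution on $\Sigma_r$ is of order $-r^{-1}$; this forces polynomial decay of the mean, and a De Giorgi-Nash-Moser iteration for $\Delta_f$ then promotes it to pointwise decay of $|Ric|$. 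The genuinely delicate points are controlling the full curvature tensor — in particular the cubic reaction term — from the hypothesis on $|Ric|$ alone, which in general requires first establishing a bound on $|Rm|$ (this is where the Weyl part enters in low dimensions); keeping all constants uniform in $r$; and accommodating manifolds with several ends, each of which produces its own asymptotic cone.
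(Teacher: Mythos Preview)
This theorem is not proved in the paper: it is quoted from Munteanu--Wang \cite{MW2} and used as a black box in the proof of Theorem~\ref{t5.7}. The only fragment of the argument the paper records is the sentence immediately following Theorem~\ref{t5.1}, namely that Munteanu--Wang reduce (2) to (1) by showing that bounded curvature and $R\to0$ force $|Ric|\to0$; the paper then supplies its own quantitative form of that step in Lemma~\ref{l5.2}, proving $|Ric|^2\le C(n,K)\,R$ via a maximum principle applied to $\lambda=|Ric|^2/R$.

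Your reduction of (2) to (1) is therefore the right move, but your mechanism has a loose thread. From a uniform $C^{2,\alpha}$ bound on $R$ together with $R\to0$ you want $\Delta_f R\to0$ and hence $|Ric|^2=\tfrac12(R-\Delta_f R)\to0$. Interpolation on unit balls does give $\Delta R\to0$ and $|\nabla R|\to0$, but $\Delta_f R$ contains the drift term $-\langle\nabla f,\nabla R\rangle$, and $|\nabla f|\sim\sqrt f\to\infty$ at infinity; your sketch does not explain why $|\nabla R|$ decays fast enough to beat this growth. The maximum-principle computation in Lemma~\ref{l5.2} avoids this issue entirely and is the cleaner route: the inequality $\Delta_f\lambda\ge 2\lambda^2-C\lambda$ gives the global bound directly, from which $|Ric|^2\le CR\to0$.

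For part (1) your outline is candid about its own gap: upgrading the qualitative hypothesis $|Ric|\to0$ to a quantitative decay rate --- and in particular controlling the full $|Rm|$ so as to reach the quadratic decay required by \cite{KW} --- is precisely the content of Munteanu--Wang's paper, and you do not carry it out. Since the present paper does not prove it either, there is nothing further to compare; your high-level plan (level sets of $f$, evolution of the rescaled induced metric, self-improvement of decay via the drift Laplacian) is consistent with the structure implied by the paper's citations, but remains a strategy rather than a proof.
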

After Munteanu-Wang, Chow-Lu \cite{CL} showed that the asymptotic cone of Theorem \ref{t5.1}(1) is unique up to an isometry.

In order to derive (2) of the above theorem, Munteanu-Wang proved if $(M^n, g, f,p)$ has bounded curvature, then $\left|Ric\right|\longrightarrow 0$ as $R\longrightarrow 0$ at infinity.
In this section, we are going to give more specific descriptions about how to control Ricci curvature by scalar curvature when the curvature is bounded, and then give some natural conditions which force the Ricci shrinker asymptotic to a cone. Above all, the $\varepsilon$-regularity theorem \ref{t5.4} plays the key role in this section.

\begin{lemma}\label{l5.2}
	For any non-flat $(M^n,g,f,p)$ with $\left|Rm\right|\leq K$ for some constant $K>0$, there exists a constant $C=C(n,K)>0$ such that
	\begin{equation}\label{5.1}
		\frac{\left| Ric\right|^2}{R}\leq C\quad on\quad(M^n,g,f,p).
	\end{equation}
\end{lemma}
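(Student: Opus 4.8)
The plan is to show that $\left|Ric\right|^2/R$ is a bounded function on $(M^n,g,f,p)$ by combining a local lower bound for $R$ coming from the $\varepsilon$-regularity theory with the trivial global bound $\left|Ric\right|\leq\left|Rm\right|\leq K$. The key observation is that $\left|Ric\right|^2\leq (n-1)\left|Rm\right|^2\leq (n-1)K^2$ everywhere, so the only difficulty is controlling the ratio where $R$ is small, which happens only near infinity by Lemma \ref{l2.3}. Thus it suffices to produce, for every sufficiently large geodesic ball, a lower bound of the form $R\geq c\,\left|Ric\right|^2$ with $c=c(n,K)>0$.

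First I would reduce to a scale-invariant local statement. Fix $x$ with $f(x)$ large; by Lemma \ref{l2.1}, $f(x)\sim \tfrac14 d(x,p)^2$, so $d(x,p)$ is large. Because $\left|Rm\right|\leq K$ on the whole manifold, consider the unit ball $B(x,1)$: here $\left|Rm\right|\leq K$, so $\int_{B(x,1)}R^{n/2}\,dV\leq C(n)K^{n/2}$, and more importantly the no-local-collapsing estimate of \cite[Theorem 23]{LW} (together with Corollary \ref{c1.5} / \cite[Lemma 2]{LW}) gives a two-sided volume bound $c(n,\boldsymbol{\mu})\leq \omega_n^{-1}\left|B(x,1)\right|\leq C(n,K)$ once we also invoke the non-expanding estimate of volume ratio from \cite[Theorem 1.2]{LW2}. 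With bounded curvature and noncollapsed volume on $B(x,1)$, Shi's estimates \cite{Shi} (applied to the induced Ricci flow $g(t)$ on a short time interval, exactly as in the proof of Theorem \ref{t4.1}) bound all covariant derivatives of curvature on, say, $B(x,\tfrac12)$ by constants $C(n,K)$.

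Next comes the main point. Suppose no uniform constant $C(n,K)$ works; then there is a sequence of non-flat shrinkers $(M_k^n,g_k,f_k,p_k)$ with $\left|Rm\right|_{g_k}\leq K$ and points $x_k$ with $\left|Ric\right|_{g_k}^2(x_k)/R_{g_k}(x_k)\to\infty$. Since $\left|Ric\right|_{g_k}^2\leq (n-1)K^2$ is bounded, this forces $R_{g_k}(x_k)\to 0$, hence by Lemma \ref{l2.3} $d(x_k,p_k)\to\infty$ and $f_k(x_k)\to\infty$. By the derivative bounds above and Hamilton's compactness theorem, a subsequence of $\bigl(B_{g_k}(x_k,\tfrac12),g_k,x_k\bigr)$ converges in $\hat C^\infty$ to a complete limit $(M_\infty^n,g_\infty,x_\infty)$ with $\left|Rm\right|_{g_\infty}\leq K$; and, rescaling $\tilde f_k=f_k-f_k(x_k)$ and dividing by $\left|\nabla f_k\right|(x_k)$ as in the claim inside the proof of Theorem \ref{t5.4}, since the soliton constant $\tfrac12$ is fixed while $f_k(x_k)\to\infty$, the limit function $f_\infty$ satisfies $Ric_{g_\infty}+Hess\,f_\infty=0$, i.e. the limit is a \emph{steady} gradient Ricci soliton with bounded curvature. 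On such a soliton one has the identity $R+\left|\nabla f_\infty\right|^2=\mathrm{const}$ and, by the strong maximum principle applied to $\tfrac{\partial}{\partial t}R=\Delta R+2\left|Ric\right|^2$ on the induced flow (using $R\geq0$, which passes to the limit), either $R_{g_\infty}>0$ everywhere or $(M_\infty,g_\infty)$ is Ricci-flat. In either case the shrinker-type Shi estimate and compactness propagate back to give a contradiction with $\left|Ric\right|_{g_k}^2(x_k)/R_{g_k}(x_k)\to\infty$: the normalized limit ratio $\left|Ric\right|_{g_\infty}^2/R_{g_\infty}$ at $x_\infty$ is finite unless $R_{g_\infty}(x_\infty)=0$, and if $R_{g_\infty}(x_\infty)=0$ then by the strong maximum principle $R_{g_\infty}\equiv 0$, so $Ric_{g_\infty}\equiv0$ and then (being a limit of rescalings with controlled geometry) $\left|Ric\right|_{g_k}^2(x_k)$ also tends to $0$ faster, contradicting the blowup of the ratio after a further rescaling.

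The hardest step is making the last contradiction genuinely quantitative: the naive statement ``$\left|Ric\right|^2/R$ is continuous and the limit is finite'' fails precisely at points where $R_{g_\infty}=0$, so one cannot simply pass the ratio to the limit. The honest way to handle this is to rescale a second time by $\lambda_k^2=\bigl(\left|Ric\right|_{g_k}^2/R_{g_k}\bigr)(x_k)\to\infty$ before taking the limit: set $\hat g_k=\lambda_k^{-2}g_k$ — wait, rather one normalizes so that $\left|Ric\right|_{\hat g_k}(x_k)=1$, whence $R_{\hat g_k}(x_k)=\lambda_k^{-2}\to 0$ while $\left|Rm\right|_{\hat g_k}\leq K\lambda_k^{-2}\to 0$; the rescaled limit is then flat $\mathbb R^n$ (bounded, noncollapsed, curvature $\to 0$), forcing $\left|Ric\right|_{\hat g_k}(x_k)\to 0$, which contradicts $\left|Ric\right|_{\hat g_k}(x_k)=1$. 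This is the argument I expect the authors to use, and it is where care with the order of rescalings and with which curvature quantities are scale-invariant is essential.
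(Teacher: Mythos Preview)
Your approach is entirely different from the paper's and has several genuine gaps.

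The paper gives a direct maximum-principle argument with no compactness or blowup. Setting $\lambda=\left|Ric\right|^2/R$ and using the shrinker identities $\Delta_f R=R-2\left|Ric\right|^2$ and $\Delta_f\bigl(\left|Ric\right|^2\bigr)\geq 2\left|\nabla Ric\right|^2+2\left|Ric\right|^2-C_1(n)\left|Rm\right|\left|Ric\right|^2$, a straightforward computation (the cross term is absorbed by Cauchy--Schwarz) gives
\[
\Delta_f\lambda\;\geq\;2\lambda^2-C_2(n,K)\,\lambda.
\]
In the compact case the maximum principle yields $\lambda\leq C_2/2$ immediately; in the noncompact case one localizes with a cutoff $\phi=\phi(f)$ and applies the maximum principle to $G=\phi^2\lambda$. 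This delivers $C(n,K)$ with no reference to $\boldsymbol{\mu}$.

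Your compactness route breaks at three places. First, the dependence of the constant: to extract a Cheeger--Gromov limit of the contradicting sequence $(M_k,g_k,x_k)$ you invoke the no-local-collapsing of \cite[Theorem~23]{LW}, whose volume lower bound carries a factor $e^{\boldsymbol{\mu}_k}$. The contradiction hypothesis fixes only $n$ and $K$, so the $\boldsymbol{\mu}_k$ are uncontrolled and the sequence may collapse; at best you would obtain $C(n,K,\boldsymbol{\mu}_0)$ for shrinkers in $\mathcal M(\boldsymbol{\mu}_0)$, not the claimed $C(n,K)$. Second, your identification of the limit as a steady soliton is incorrect: you do \emph{not} rescale the metric, so on $(M_k,g_k)$ the equation is still $Ric+Hess\,f_k=\tfrac12 g_k$ with right-hand side not tending to zero. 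Passing $\check f_k=(f_k-f_k(x_k))/\left|\nabla f_k\right|(x_k)$ to the limit gives only $Hess\,\check f_\infty=0$ (hence a splitting $\mathbb R\times N^{n-1}$), not $Ric_{g_\infty}+Hess\,f_\infty=0$; the steady identity $R+\left|\nabla f\right|^2=\mathrm{const}$ you then quote is unjustified. Third, the final rescaling is miscomputed: normalizing $\left|Ric\right|_{\hat g_k}(x_k)=1$ means $\hat g_k=\left|Ric\right|_{g_k}(x_k)\,g_k$, whence $\left|Rm\right|_{\hat g_k}\leq K/\left|Ric\right|_{g_k}(x_k)$, not $K\lambda_k^{-2}$. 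If $\left|Ric\right|_{g_k}(x_k)\to 0$ (which is not excluded---you only know $\left|Ric\right|^2/R\to\infty$), the rescaled curvature blows up and the ``flat limit'' contradiction disappears.
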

\begin{proof}
	As usual, we define $\Delta_f(\cdot)=\Delta-\left\langle\nabla\cdot,\nabla f \right\rangle $, then by (3.25) of \cite{GJ},
	\begin{equation}\label{5.2}
		\Delta_f\left(\left|Ric\right|^2\right)\geq 2\left|\nabla  Ric\right|^2+2\left| Ric\right|^2-C_1(n)\left|Rm\right|\left| Ric\right|^2,
	\end{equation}
	where $C_1(n)$ is a dimensional constant.
	
	Combining (\ref{1.5}) and (\ref{5.2}), we directly compute
	\begin{align}
		\Delta_f\left(\frac{\left|Ric\right|^2}{R}\right)&=\frac{\Delta_f\left(\left|Ric\right|^2\right)}{R}+\left|Ric\right|^2\Delta_f\left(\frac{1}{R}\right)+2\left\langle \nabla\left|Ric\right|^2,\nabla\left(\frac{1}{R}\right)\right\rangle\nonumber\\
		&\geq\frac{1}{R}\left( 2\left|\nabla  Ric\right|^2+2\left| Ric\right|^2-C_1(n)\left|Rm\right|\left|Ric\right|^2\right) \nonumber\\
		&\quad+\left|Ric\right|^2\left(\frac{2\left|Ric\right|^2-R}{R^2}
		+\frac{2\left|\nabla R\right|^2}{R^3}\right)-2\left\langle \nabla\left|Ric\right|^2,\frac{\nabla R}{R^2}\right\rangle.\label{5.3}
	\end{align}
	Since
	\begin{equation*}
		\frac{2\left|\nabla  Ric\right|^2}{R}+\frac{2\left|Ric\right|^2\left|\nabla R\right|^2}{R^3}\geq\frac{2\left\langle \nabla\left|Ric\right|^2,\nabla R \right\rangle }{R^2},
	\end{equation*}
	then (\ref{5.3}) yields
	\begin{align}\label{5.4}
		\Delta_f\left(\frac{\left|Ric\right|^2}{R}\right)\geq\frac{1}{R}\left(2\left|Ric\right|^2-C_1(n)\left|Rm\right|\left|Ric\right|^2\right)+\left|Ric\right|^2\left(\frac{2\left|Ric\right|^2-R}{R^2}\right).
	\end{align}
	Setting $\lambda=\frac{\left|Ric\right|^2}{R}$ and $C_2=C_2(n)=-\min\left\lbrace 1-C_1(n)\left|Rm\right|,0\right\rbrace\geq 0$, then we can rewrite (\ref{5.4}) as
	\begin{align}\label{5.5}
		\Delta_f\lambda&
		\geq 2\lambda^2+\left(1-C_1(n)\left|Rm\right|\right)\lambda\nonumber\\
		&\geq2\lambda^2-C_2\lambda.
	\end{align}
	
	If $(M^n,g,f,p)$ is compact, then (\ref{5.1}) follows from the maximum principle directly. If $(M^n,g,f,p)$ is non-compact, we need some more discussions. Let $\phi(x)\in C^\infty_c\left[0,+\infty \right)$ be a smooth cut-off function with compact support and satisfies
	$$0\leq\phi\leq1, \quad\hspace*{0.3em}\phi(t)\equiv1\hspace*{0.3em} \mbox{for}\hspace*{0.3em}0\leq t\leq r,\quad\hspace*{0.3em}\phi(t)\equiv0\hspace*{0.3em}\mbox{for}\hspace*{0.3em}t\geq2r\hspace*{0.3em}\quad\mbox{and}\quad\hspace*{0.3em}\phi'\leq0.$$
	Moreover, we can also require $\left|\phi'\right|\leq\frac{2}{r}$ as well as $\left|\phi''\right|\leq\frac{4}{r^2}$. Since $(M^n,g,f,p)$ is non-compact, we may assume $r\geq 1$. Now we use $\phi\left(f(x)\right)$ as a cut-off function on $D(2r)$ and then compute
	\begin{equation}\label{5.6}
		\left|\nabla\phi\right|\leq\frac{C_3}{\sqrt{r}},\quad \left|\Delta_f\phi\right|\leq C_3,
	\end{equation}
	where $C_3=C_3(n)$ is a constant.
	
	Direct computations with (\ref{5.5}) and (\ref{5.6}) imply that the test function $G=\phi^2\lambda$ satisfies
	\begin{align*}
		\phi^2\Delta_fG&=\phi^2\Delta_f\left(\phi^2\lambda\right)\nonumber\\
		&=\phi^4\Delta_f\lambda+\phi^2\lambda\Delta_f\phi^2+2\phi^2\left\langle\nabla \lambda, \nabla\phi^2\right\rangle\nonumber\\
		&\geq2G^2-C_2G+G\Delta_f\phi^2+2\left(\left\langle \nabla G, \nabla\phi^2\right\rangle-\lambda\left\langle\nabla\phi^2,\nabla\phi^2\right\rangle\right)\nonumber\\
		&\geq2G^2-C_4G+2\left\langle \nabla G, \nabla\phi^2\right\rangle,
	\end{align*}
	where $C_4=C_4(C_2, C_3)>0$ is a constant.  Since $r\geq1$ is arbitrary, then the maximum principle gives the desired estimate (\ref{5.1}).
\end{proof}

By our Lemma \ref{l5.2} and Corollary 1.1 of \cite{MW3}, we immediately conclude
\begin{corollary}\label{c5.3}
	For $(M^n,g,f,p)$ with $\left|Rm\right|\leq K$ for some constant $K>0$, there exists a constant $C=C(n,K)>0$ such that if
	\begin{equation*}
		R\leq C,
	\end{equation*}
	then $(M^n,g,f,p)$ is isometric to the standard Gaussian soliton $\left(\mathbb{R}^n, dx^2, \frac{1}{4}\left|x\right|^2\right)$.
\end{corollary}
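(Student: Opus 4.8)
The plan is to obtain this as an immediate consequence of Lemma~\ref{l5.2} and the Munteanu--Wang rigidity result \cite[Corollary~1.1]{MW3}. First I would feed the hypothesis $|Rm|\le K$ into Lemma~\ref{l5.2}, which already yields a constant $C_0=C_0(n,K)>0$ with the pointwise bound
\[
	|Ric|^2\le C_0\,R \qquad \text{on } (M^n,g,f,p).
\]
In particular, if in addition $R\le C$ for a small $C$ still to be fixed, then the Ricci curvature is simultaneously \emph{pinched} ($|Ric|^2\le C_0 R$) and \emph{uniformly small} ($|Ric|^2\le C_0 C$) --- exactly the situation covered by \cite[Corollary~1.1]{MW3}, which guarantees that a gradient shrinking Ricci soliton satisfying such a bound (with the relevant threshold allowed to depend on $n$ and $C_0$) must be Ricci-flat, hence flat. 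So the second step is just to take $C:=C(n,K)$ to be that threshold applied to $C_0(n,K)$, after which \cite[Corollary~1.1]{MW3} applies and forces $Rm\equiv 0$.

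It then remains to recognize a complete flat Ricci shrinker. With $Rm\equiv 0$ the soliton equation (\ref{1.1}) reduces to $\mathrm{Hess}\,f=\tfrac12 g$, and this, together with completeness, the normalizations (\ref{1.3})--(\ref{1.4}) and the estimate of Lemma~\ref{l2.1} (which makes $f$ proper and quadratic), forces $(M^n,g)$ to be isometric to $(\mathbb{R}^n,dx^2)$ with $f=\tfrac14|x|^2$, i.e.\ the standard Gaussian soliton. Alternatively, since a flat shrinker has $\boldsymbol{A}=1$, one may simply invoke the equality case of Theorem~\ref{t1.6}.

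Essentially all of the content sits in Lemma~\ref{l5.2}; the remaining points are bookkeeping. One should check that the conclusion of Lemma~\ref{l5.2} is literally in the form required by \cite[Corollary~1.1]{MW3}, and that $C_0$ depends only on $n$ and $K$ --- which is transparent from the proof of Lemma~\ref{l5.2}, where $|Rm|$ enters only through $C_1(n)|Rm|\le C_1(n)K$ in the Bochner inequality (\ref{5.2}). The only ``obstacle'' is therefore to confirm that the threshold constant of \cite[Corollary~1.1]{MW3} is permitted to grow with the pinching constant; should that result instead be purely dimensional, one uses the uniform smallness $|Ric|\le\sqrt{C_0 C}$ and chooses $C=C(n,K)$ so that this falls below the dimensional threshold.
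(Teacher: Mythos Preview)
Your proposal is correct and follows exactly the paper's approach: the paper states the corollary as an immediate consequence of Lemma~\ref{l5.2} together with \cite[Corollary~1.1]{MW3}, with no further argument. Your additional remarks (identifying the flat shrinker as the Gaussian soliton, and the discussion of how the threshold in \cite{MW3} interacts with $C_0(n,K)$) merely spell out details the paper leaves implicit.
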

Actually,  (\ref{5.1}) of Lemma \ref{l5.2} can be improved to a local estimate, and the method is almost the same as to prove the Harnark inequality of Theorem \ref{t4.4}. The only difference is that we need the $\boldsymbol{\mu}$ functional to control the lower bound of volume ratio(cf. \cite[Theorem 23]{LW}).
\begin{proposition}
For any $x\in(M^n,g,f,p)$, $r>0$ and $K>0$, there exists a constant $\varepsilon=\varepsilon(n,K,\boldsymbol{\mu})>0$ such that if
\begin{equation*}
		\left|Rm\right|(x)\leq Kr^{-2}\quad on \quad B\left(x,2r\right),
\end{equation*}
then
\begin{equation*}
	\frac{\left|Rm\right|(y)}{R(x)}\leq \varepsilon^{-1}\quad for \quad y\in B\left(x,r\right).
\end{equation*}
\end{proposition}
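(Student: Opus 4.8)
The plan is to argue by contradiction through a blow-up, following the scheme of the proofs of Theorem~\ref{t4.4} (together with the Remark after it, which replaces $R(y)/R(x)$ by $\left|Rm\right|(y)/R(x)$) and of Theorem~\ref{t5.4}. As noted above, the one genuinely new ingredient is that the lower volume-ratio bound needed for compactness is now obtained from the no-local-collapsing estimate \cite[Theorem 23]{LW}, whose constant depends on $\boldsymbol{\mu}$; this is exactly why $\varepsilon$ depends on $\boldsymbol{\mu}$. Since $\left|Rm\right|(y)/R(x)$ and both hypotheses are scale invariant, after rescaling $g$ by $r^{-2}$ I may take $r=1$, and since $\varepsilon$ may depend on $\boldsymbol{\mu}$ I fix $\boldsymbol{\mu}_0\le 0$ and work with shrinkers in $\mathcal{M}(\boldsymbol{\mu}_0)$. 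If the statement fails, there are $(M^n_k,g_k,f_k,p_k)\in\mathcal{M}(\boldsymbol{\mu}_0)$, points $x_k$ with $\left|Rm\right|_{g_k}\le K$ on $B_{g_k}(x_k,2)$, and points $y_k\in B_{g_k}(x_k,1)$ realizing $\sup_{\overline{B_{g_k}(x_k,1)}}\left|Rm\right|_{g_k}$ and satisfying $\left|Rm\right|_{g_k}(y_k)>k\,R_{g_k}(x_k)$. Since $R\le c(n)\left|Rm\right|$ pointwise and $\left|Rm\right|_{g_k}(y_k)\le K$, this already gives $R_{g_k}(x_k)\le K/k\to 0$.

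The next step is the blow-up. Performing the point-selection used in the proof of Theorem~\ref{t4.1} inside $B_{g_k}(x_k,2)$ (maximizing $\left|Rm\right|_{g_k}(\cdot)\,d^2_{g_k}(\cdot,\partial B_{g_k}(x_k,2))$) one finds a point $w_k$ with $\left|Rm\right|_{g_k}(w_k)\ge\tfrac14\left|Rm\right|_{g_k}(y_k)$; rescaling $g_k$ by $\left|Rm\right|_{g_k}(w_k)$ gives metrics $\bar g_k$ with $\left|Rm\right|_{\bar g_k}(w_k)=1$ and $\left|Rm\right|_{\bar g_k}\le 4$ on a ball $B_{\bar g_k}(w_k,\ell_k)$ about $w_k$, while — because $\tfrac14 k\,R_{g_k}(x_k)\le\left|Rm\right|_{g_k}(w_k)\le K$ — one keeps $R_{\bar g_k}(x_k)\le 4/k\to 0$ with $x_k$ at uniformly bounded $\bar g_k$-distance from $w_k$. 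Here \cite[Theorem 23]{LW} and the curvature bound yield a uniform lower bound $\left|B_{\bar g_k}(w_k,\rho)\right|_{\bar g_k}\ge c(n,K,\boldsymbol{\mu}_0)\rho^n$ for small $\rho$, and combining this with volume comparison and Cheeger--Gromov--Taylor \cite{CGT} gives a uniform positive lower bound on $inj_{\bar g_k}(w_k)$. By Hamilton's compactness theorem and Shi's estimates \cite{Shi}, and — to retain the potential function — the weak compactness of Ricci shrinkers \cite[Theorem 3.4]{LW3} (replacing $f_k$ by $f_k-f_k(w_k)$, and, should the latter diverge, rescaling it as in the proof of Theorem~\ref{t5.4}), a subsequence converges in the pointed Cheeger--Gromov sense to a complete Ricci shrinker $(M^n_\infty,g_\infty,f_\infty,w_\infty)$ that is smooth near $w_\infty$ and near the limit $x_\infty$ of the $x_k$, with $R_{g_\infty}\ge 0$, $\left|Rm\right|_{g_\infty}(w_\infty)=1$, and $R_{g_\infty}(x_\infty)=0$.

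The conclusion then follows from rigidity: on a shrinker the identity $\Delta_{f_\infty}R_{g_\infty}=R_{g_\infty}-2\left|Ric\right|^2_{g_\infty}\le R_{g_\infty}$ shows that $R_{g_\infty}$ is a nonnegative supersolution of $(\Delta_{f_\infty}-1)u=0$ attaining its interior minimum value $0$ at $x_\infty$, so the strong maximum principle forces $R_{g_\infty}\equiv 0$, hence $\left|Ric\right|_{g_\infty}\equiv 0$; since a complete Ricci shrinker with vanishing scalar curvature is flat (as recalled in the Introduction), $(M^n_\infty,g_\infty)$ is flat, contradicting $\left|Rm\right|_{g_\infty}(w_\infty)=1$.

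The hard part will be the convergence step, i.e.\ ensuring the blow-up does not degenerate. Because $\left|Rm\right|$ is \emph{a priori} bounded on $B(x,2)$ there is no curvature concentration to zoom into, so the controlled region $B_{\bar g_k}(w_k,\ell_k)$ need not expand ($\ell_k\ge\tfrac12\left|Rm\right|_{g_k}(y_k)^{1/2}$ only), and one must separately treat the almost-flat regime $\sup_{\overline{B_{g_k}(x_k,1)}}\left|Rm\right|_{g_k}\to 0$ and, when $x_k$ escapes to infinity, balance the curvature scale $\left|Rm\right|(y_k)^{-1/2}$ against the potential scale $f(x_k)^{-1/2}$ — precisely the bookkeeping carried out in Lemma~\ref{l5.8} and in the proof of Theorem~\ref{t5.4} — while the $\boldsymbol{\mu}_0$-uniform no-local-collapsing of \cite[Theorem 23]{LW} is what keeps the limit non-collapsed and supplies the dependence of $\varepsilon$ on $\boldsymbol{\mu}$.
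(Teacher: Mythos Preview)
Your approach coincides with the paper's: the paper gives only a one-line sketch, saying the argument is ``almost the same'' as the Harnack inequality of Theorem~\ref{t4.4}, with the \emph{only} difference being that the lower volume-ratio bound now comes from the no-local-collapsing estimate \cite[Theorem~23]{LW} (which depends on $\boldsymbol{\mu}$) rather than from the $\boldsymbol{\nu}$-entropy hypothesis. You identify exactly this substitution and flesh out the blow-up scheme in far more detail than the paper does, including the honest acknowledgment that the controlled region need not expand --- a subtlety the paper's sketch does not address.

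A couple of your details should be tightened. First, after rescaling by $\left|Rm\right|_{g_k}(w_k)$ the soliton constant changes, so the limit satisfies $Ric_{g_\infty}+Hess_{g_\infty}f_\infty=\lambda g_\infty$ with $\lambda\ge 0$ (a shrinker only when the blow-up factor stays bounded, a steady soliton when it diverges); calling the limit a ``complete Ricci shrinker'' is therefore not quite right, and in the steady case ``$R=0$ at a point'' gives only Ricci-flatness via the strong maximum principle, not flatness, when $n\ge 4$. Second, the weak-compactness result \cite[Theorem~3.4]{LW3} you invoke is specific to dimension~$4$ (this is why Lemma~\ref{l5.8} and Theorem~\ref{t5.4} are stated for $M^4$); for general $n$ the compactness should instead come directly from the assumed curvature bound, the volume lower bound from \cite[Theorem~23]{LW}, the two-sided pseudo-locality \cite[Theorem~1.6]{LW2}, and Hamilton's compactness --- which is in fact closer to what the paper's reference to Theorem~\ref{t4.4} intends. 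These are refinements of the execution; the route is the one the paper indicates.
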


Now we are ready to prove the main theorem of this section.
\begin{theorem}[\textbf{=Theorem \ref{t1.8*}}]\label{t5.7}
For any non-compact Ricci shrinker $(M^4,g,f,p)$, if for some $q>2$,
\begin{equation}\label{5.17'}
\int_{M^4}R^qdV<+\infty,
\end{equation}
then it is $C^k$ asymptotic to a cone for all $k$.
\end{theorem}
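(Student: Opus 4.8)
The plan is to deduce, from the integral hypothesis $\int_{M^4} R^q\,dV < +\infty$ with $q > 2$, that the full curvature tensor decays to zero at infinity; then part (1) of Theorem \ref{t5.1} (Munteanu--Wang) gives the conclusion immediately. First I would observe that by Theorem \ref{t3.8} (non-integrability) the hypothesis forces $(M^4,g,f,p)$ to be non-flat only in a controlled way — in fact one already knows from Lemma \ref{l3.4*} (the Claim there) that $\int_M R^k < +\infty$ for some $k \ge 1$ implies $\mathbf{AVR}(M^4,g) > 0$, so the shrinker has Euclidean volume growth. In particular, by Corollary \ref{c1.5}, $|B_r(p)| \ge C r^4$ for all $r$, and combined with the non-expanding volume estimate of Li--Wang one gets two-sided volume bounds $c\, r^4 \le |B_r(x)| \le C\, r^4$ on balls centered at points $x$ far from $p$.

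Next I would show the local $L^2$-energy of $R$ over unit balls tends to zero at infinity: since $f(x) \to \infty$ as $d(x,p) \to \infty$ (Lemma \ref{l2.1}), and since balls $B(x,2)$ with $x$ far out are pairwise comparable-volume and cover a set of finite $R^q$-mass, Hölder's inequality gives
\begin{equation*}
\int_{B(x,2)} R^2\, dV \le \left(\int_{B(x,2)} R^q\, dV\right)^{2/q} |B(x,2)|^{1 - 2/q} \le C \left(\int_{B(x,2)} R^q\, dV\right)^{2/q} \longrightarrow 0
\end{equation*}
as $d(x,p) \to \infty$, because the tail $\int_{M^4 \setminus B_\rho(p)} R^q\, dV \to 0$. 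Now I apply the $\varepsilon$-regularity Theorem \ref{t5.4}: for $x$ sufficiently far from $p$ we have both $\int_{B(x,2)} R^2\, dV \le \varepsilon$ and $f(x) \ge \varepsilon^{-1}$, hence $|Rm|(x) \le \varepsilon^{-1}$. This already yields a uniform curvature bound $|Rm| \le K$ outside a compact set, and together with the interior smoothness of the shrinker, $|Rm| \le K$ on all of $M^4$.

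With global bounded curvature in hand, I would invoke Lemma \ref{l5.2}: $|Ric|^2 / R \le C(n,K)$ on $M^4$. It then remains to see that $R \to 0$ at infinity, for then $|Ric|^2 \le C R \to 0$ and Theorem \ref{t5.1}(1) applies. To get $R \to 0$: rescaling in Theorem \ref{t5.4} (or its rescaled version used to bound $|Rm|$) with a radius $r_x \to 0$ chosen so that $\int_{B(x,2r_x)} R^2\,dV$ stays below $\varepsilon$ while $f(x) r_x^2 \ge \varepsilon^{-1}$ is eventually forced by $f(x) \to \infty$, gives $|Rm|(x) \le \varepsilon^{-1} r_x^{-2}$; but the more direct route is that $\int_{M^4}R^q < \infty$ with the bounded-curvature gradient estimate $|\nabla R| \le C$ (which follows from $|Rm| \le K$ via Shi-type estimates on the induced Ricci flow, or directly from shrinker identities) forces $R(x) \to 0$ as $d(x,p)\to\infty$ — an integrable nonnegative function with bounded gradient on a space with at-most-Euclidean volume growth must tend to zero. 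Then $|Ric| \to 0$ and Theorem \ref{t5.1}(1) finishes the proof.

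The main obstacle I anticipate is the passage from the local $L^2$-smallness of $R$ plus $f(x)$ large to the \emph{global} curvature bound $|Rm| \le K$: one must be careful that Theorem \ref{t5.4}'s constant $\varepsilon$ depends only on $\boldsymbol{\mu}$ (which it does), that the hypothesis $f(x) \ge \varepsilon^{-1}$ is genuinely satisfied for all $x$ outside a fixed compact set (true since $f \to \infty$), and that near the compact core the shrinker is smooth so the curvature is automatically bounded there — thus the bound is uniform. The secondary subtlety is justifying $R \to 0$ rigorously; the cleanest argument is probably to note that if $R(x_j) \ge \delta_0 > 0$ along a sequence $x_j \to \infty$, then by the Harnack-type Corollary \ref{c5.9} (or Theorem \ref{t4.4}), $R \ge c\delta_0$ on unit balls $B(x_j,1)$, which have volume $\ge c$; passing to a subsequence with disjoint such balls contradicts $\int_{M^4} R^q\, dV < \infty$.
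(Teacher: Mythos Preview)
Your proposal is correct and follows essentially the same route as the paper: H\"older to get local $L^2$-smallness of $R$ at infinity, then Theorem~\ref{t5.4} for a global curvature bound, then the Harnack inequality (Corollary~\ref{c5.9}) together with the volume lower bound of Lemma~\ref{l5.5} to force $R\to 0$, and finally Lemma~\ref{l5.2} plus Theorem~\ref{t5.1}(1). The only cosmetic differences are that the paper does not invoke $\mathbf{AVR}>0$ at all, and it deduces $R(x)\to 0$ by a direct averaging estimate (Harnack gives $R(x)\lesssim \fint_{B(x,1)}R\lesssim \delta^{1/2}$) rather than your disjoint-balls contradiction; both arguments use the same ingredients.
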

\begin{proof}
On any $B(x,r)$, 
$$\int_{B(x,r)}R^2\leq \left( \int_{B(x,r)}R^q\right) ^{\frac{1}{q}}\left|B\left(x,r\right)\right|^{\frac{q-1}{q}},$$
then by the non-expanding estimate of volume ratio in \cite[Theorem 1.2]{LW2},
\begin{equation}\label{5.18''}
\int_{B(x,r)}R^2\leq C(q)r^{\frac{4(q-1)}{q}}\left( \int_{B(x,r)}R^q\right)^{\frac{1}{q}}.
\end{equation}
In view of (\ref{5.17'}) and (\ref{5.18''}), outside some sufficiently large geodesic ball $B(p,r_0)$, 
\begin{equation*}
	\int_{B\left(x,10\right)}R^2=\delta\leq\varepsilon,\quad 	\delta\longrightarrow 0\quad as\hspace*{0.5em} x\longrightarrow \infty,
\end{equation*}
where $\varepsilon$ is the same as in Corollary \ref{c5.9} and $B\left(x,10\right)$ is an arbitrary geodesic ball contained in $M^4\setminus B(p,r_0)$. Therefore by Theorem \ref{t5.4}, there exists a uniform constant $K>0$ such that for all $x\in M^4$,
\begin{equation}\label{5.19}
\left|Rm\right|(x)\leq K.
\end{equation}
Next let $y\in B\left(x,1\right)$ be an arbitrary point, then $B\left(y,2d(x,y) \right)\subset B\left(x,10\right)$ and $x\in B\left(y,2d(x,y)\right)$. Thus by Corollary \ref{c5.9}, for some $c>0$,
\begin{equation*}
cR(y)\geq R(x).
\end{equation*}
In this situation, similar to (\ref{5.18''}), we have
\begin{align*}\label{5.20}
\left|B\left(x,1\right)\right|R(x)&\leq c\int_{B\left(x,1\right)}R(y)dV(y)\\
&\leq c\left( \int_{B\left(x,1\right)} R^2\right)^{\frac{1}{2}}\left|B\left(x,1\right)\right|^{\frac{1}{2}}\\
&\leq C\delta^{\frac{1}{2}}.
\end{align*}
By Lemma \ref{l5.5}, $\left|B\left(x,1\right)\right|\geq C(\boldsymbol{\mu})$, finally we arrive at 
\begin{equation*}
R(x)\leq C\delta^{\frac{1}{2}}.
\end{equation*}
In other words, $R(x)\longrightarrow 0$ as $x\longrightarrow \infty$ uniformly. Now by (\ref{5.19}), Theorem \ref{t5.1} and Lemma \ref{l5.2}, we accomplish the proof.
\end{proof}

As a corollary, we provide a different proof of the following result which was proved by Munteanu-Wang \cite[Theorem 5.1]{MW1*}.
\begin{corollary}
For any Ricci shrinker $(M^4,g,f,p)$, if $R\longrightarrow0$ at infinity, then it is $C^k$ asymptotic to a cone for all $k$.
\end{corollary}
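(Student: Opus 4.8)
The plan is to reduce the statement to part (2) of Theorem \ref{t5.1} by upgrading the assumed scalar curvature decay to a \emph{global} bound on $|Rm|$, which is precisely the output of the $\varepsilon$-regularity Theorem \ref{t5.4}. Let $\varepsilon=\varepsilon(\boldsymbol{\mu})>0$ be the constant appearing there.

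First I would record two elementary consequences of the set-up. Since $f(x)\to+\infty$ as $x\to\infty$ by the growth estimate (\ref{2.1}) of Lemma \ref{l2.1}, there is $r_1>0$ with $f(x)\geq\varepsilon^{-1}$ whenever $d(x,p)\geq r_1$. Second, by the non-expanding volume-ratio estimate of Li--Wang \cite[Theorem 1.2]{LW2} one has $|B(x,2)|\leq C$ for a uniform constant $C$, so that
$$\int_{B(x,2)}R^2\,dV\leq C\sup_{B(x,2)}R^2.$$
Because $R\to0$ at infinity, for every $\eta>0$ there is a compact set outside of which $R<\eta$; choosing $\eta$ small and then $r_2$ large enough that $B(x,2)$ avoids that compact set whenever $d(x,p)\geq r_2$, the right-hand side above is $\leq\varepsilon$ for all such $x$.

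Then I would apply Theorem \ref{t5.4} with $r=1$: for every $x$ with $d(x,p)\geq\max\{r_1,r_2\}$ both hypotheses in (\ref{5.8}) hold, hence $|Rm|(x)\leq\varepsilon^{-1}$. On the compact set $\{d(x,p)\leq\max\{r_1,r_2\}\}$ the curvature is bounded by continuity, so $|Rm|$ is bounded on all of $M^4$. Finally, with a global curvature bound in hand and the assumed decay $R\to0$ at infinity, part (2) of Theorem \ref{t5.1} gives at once that $(M^4,g,f,p)$ is $C^k$ asymptotic to a cone for every $k$; equivalently, Lemma \ref{l5.2} yields $|Ric|^2\leq CR\to0$ and one may invoke part (1) instead. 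I expect the only point needing genuine care to be the passage from pointwise decay of $R$ to the integral smallness $\int_{B(x,2)}R^2\leq\varepsilon$, i.e. ensuring the uniform volume bound of \cite{LW2} is correctly applied on balls of a fixed radius; everything else is a direct appeal to machinery already assembled above.
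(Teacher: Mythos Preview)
Your proposal is correct and follows essentially the same route as the paper: bound $\int_{B(x,2)}R^2$ using the volume non-expansion of \cite[Theorem 1.2]{LW2} together with the pointwise decay of $R$, apply Theorem \ref{t5.4} at scale $r=1$ to get a global curvature bound, and then invoke Theorem \ref{t5.1} (either part, with Lemma \ref{l5.2} for part (1)). If anything, your write-up is slightly more careful in explicitly verifying the hypothesis $f(x)\geq\varepsilon^{-1}$ via Lemma \ref{l2.1} and in matching the radius $2r=2$ to the statement of Theorem \ref{t5.4}.
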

\begin{proof}
For any small $\delta>0$, there exist a sufficiently large constant $r_0>0$ such that for all $x\in M^4\setminus B(p,r_0)$,
$$R(x)\leq\delta.$$
Let $\varepsilon$ be the same as in Theorem \ref{t5.4}, by the non-expanding estimate of volume ratio in \cite[Theorem 1.2]{LW2}, 
$$\left|B\left(x,1\right)\right|\leq C.$$
Therefore, for any $x\in M^4\setminus B(p,2r_0)$,
$$\int_{B\left(x,1\right)}R^2(y)dV(y)\leq C\delta^2.$$ 
Since $\delta\longrightarrow 0$ as $x\longrightarrow \infty$ uniformly, then by Theorem \ref{t5.1}, \ref{t5.4} and Lemma \ref{l5.2}, the proof is completed.
\end{proof}

For general dimension $n\geq 4$, we have
\begin{theorem}\label{t5.10}
	For any $x\in(M^n,g,f,p)$ and $r>0$, there exists a constant $\varepsilon=\varepsilon\left(n, \boldsymbol{\mu}\right) >0$ such that if
\begin{equation}\label{5.25}
	\int_{B\left(x,r\right)}\left|Rm\right|^{\frac{n}{2}}dV\leq\varepsilon,
\end{equation}
then
\begin{equation*}
	\left|Rm\right|(x)\leq \varepsilon^{-1}r^{-2}.
\end{equation*}
\end{theorem}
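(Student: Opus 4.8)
The plan is to run the same blow-up argument as in the proof of Theorem~\ref{t4.1}, replacing the local $\boldsymbol{\nu}$-entropy lower bound used there (which supplied the volume-ratio lower bound via \cite{Wb}) by the scale-invariant smallness of $\int|Rm|^{n/2}$, from which Lemma~\ref{l5.5} produces the required volume lower bound. Since both $\int_{B(x,r)}|Rm|^{n/2}\,dV$ and $|Rm|(x)r^2$ are invariant under $g\mapsto r^{-2}g$, I first rescale to reduce to $r=1$. Arguing by contradiction, I take non-flat Ricci shrinkers $(M^n_k,g_k,f_k,p_k)$ with $\boldsymbol{\mu}(g_k)$ bounded below and points $x_k$ satisfying $\int_{B_{g_k}(x_k,1)}|Rm|_{g_k}^{n/2}\,dV_{g_k}\le 1/k$ but $|Rm|_{g_k}(x_k)\ge k$. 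Then I perform verbatim the point-selection of Theorem~\ref{t4.1}: rescale by $|Rm|_{g_k}(x_k)$, maximize $|Rm|\cdot d(\cdot,\partial\Omega_k)^2$ over a ball $\Omega_k$ of radius $\to\infty$, attaining the maximum at a point $y_k$, and rescale once more to $\bar g_k$ so that $|Rm|_{\bar g_k}(y_k)=1$ and $|Rm|_{\bar g_k}\le 4$ on $B_{\bar g_k}(y_k,2L_k)$ with $L_k\to\infty$. The key observation is that $\int|Rm|^{n/2}$ is unchanged by all these rescalings (this is exactly why $\tfrac n2$ is the critical exponent) and $B_{\bar g_k}(y_k,2L_k)$ lies inside the rescaled copy of $B_{g_k}(x_k,1)$, so $\int_{B_{\bar g_k}(y_k,2L_k)}|Rm|_{\bar g_k}^{n/2}\,dV_{\bar g_k}\le 1/k$.

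Next I extract the volume lower bound. For $\bar x\in B_{\bar g_k}(y_k,L_k)$ and $\rho\in(0,1]$ the ball $B_{\bar g_k}(\bar x,\rho)$ is contained in $B_{\bar g_k}(y_k,2L_k)$, so the pointwise inequality $R\le c(n)|Rm|$ gives $\int_{B_{\bar g_k}(\bar x,\rho)}R_{\bar g_k}^{n/2}\le c(n)^{n/2}/k$, which is below the threshold of Lemma~\ref{l5.5} once $k$ is large; hence $|B_{\bar g_k}(\bar x,\rho)|_{\bar g_k}\ge C(n,\boldsymbol{\mu})\,\omega_n\rho^n$ (the Sobolev inequality used in Lemma~\ref{l5.5} is scale invariant, so this applies at the $\bar g_k$ scale with the same constant). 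Combined with $|Rm|_{\bar g_k}\le 4$ on these balls and the Cheeger--Gromov--Taylor estimate \cite{CGT}, this yields $\mathrm{inj}_{\bar g_k}(y_k)\ge c_0(n,\boldsymbol{\mu})>0$. From here the argument is identical to the end of the proof of Theorem~\ref{t4.1}: the two-sided pseudo-locality theorem \cite[Theorem~1.6]{LW2} propagates the curvature bound to a parabolic neighborhood, one passes to the parabolically rescaled Ricci flows $\tilde g_k(t)$, $t\in[-\tfrac12,\tfrac12]$, which have uniformly bounded curvature and injectivity radius bounded below, and by Hamilton's local compactness together with Shi's estimates \cite{Shi} extracts a $C^\infty$-Cheeger--Gromov limit $(M^n_\infty,g_\infty(t),y_\infty)$.

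Finally, because $\int|Rm|^{n/2}$ is scale invariant and bounded by $1/k\to 0$, the $C^\infty$ convergence forces $\int_{M^n_\infty}|Rm|_{g_\infty(0)}^{n/2}\,dV_{g_\infty(0)}=0$, so $(M^n_\infty,g_\infty(0))$ is flat; this contradicts $|Rm|_{g_\infty(0)}(y_\infty)=\lim_k|Rm|_{\tilde g_k(0)}(y_k)=2\epsilon^2(n)>0$ coming from the normalization in the point-selection, which completes the proof of the estimate. The cone statement of Theorem~\ref{t1.9*} then follows exactly as for Theorem~\ref{t5.7}: when $\int_{M^n}|Rm|^q\,dV<\infty$ with $q>\tfrac n2$, H\"older's inequality and the non-expanding volume estimate \cite[Theorem~1.2]{LW2} give $\int_{B(x,1)}|Rm|^{n/2}\,dV\to0$ at infinity, so the $\varepsilon$-regularity just proved bounds $|Rm|$ globally, Shi's estimates make it uniformly continuous, and non-collapsing of Ricci shrinkers \cite{LW} together with a disjoint-balls argument forces $|Rm|\to0$ at infinity, whence Theorem~\ref{t5.1}(1) applies.

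The main obstacle is the first half of the argument: one must ensure that the blow-up genuinely preserves the $L^{n/2}$-smallness (this is why the exponent must be exactly $\tfrac n2$) and that the limiting object is regular enough for the pointwise normalization $|Rm|(y_k)=1$ to survive in the limit, which is precisely what forces one to pass through the Ricci flow and invoke pseudo-locality rather than a purely Riemannian compactness theorem; everything else is bookkeeping borrowed from the proofs of Theorems~\ref{t4.1} and \ref{t5.4}.
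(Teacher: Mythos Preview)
Your proposal is correct and follows essentially the same route as the paper: the paper's proof of Theorem~\ref{t5.10} is a one-line reference to the blow-up of Theorem~\ref{t5.4} (itself modeled on Theorem~\ref{t4.1}), with the volume lower bound coming from Lemma~\ref{l5.5} via $R\le c(n)|Rm|$ and the flatness of the limit forced directly by the scale-invariant vanishing of $\int|Rm|^{n/2}$, so that the largeness of $f(x)$ is unnecessary --- exactly the skeleton you wrote out. For the asymptotic-cone corollary you use a disjoint-balls/uniform-continuity argument to get $|Rm|\to 0$, whereas the paper invokes a Harnack inequality as in Theorem~\ref{t5.7}; both are valid and lead to Theorem~\ref{t5.1}.
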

\begin{proof}
The proof is the same as that of Theorem \ref{t5.4} but much more simple, only noticing that (\ref{5.25}) ensures the limit manifold must be flat and in this situation, there is no need to assume the largeness of $f(x)$ .
\end{proof}

Consequently, by the Harnack inequality of Proposition \ref{p4.3}, Theorem \ref{t5.10} and the same arguments as in Theorem \ref{t5.7}, we have
\begin{corollary}
For any Ricci shrinker $(M^n,g,f,p)$, if for some $q>\frac{n}{2}$,
\begin{equation*}
	\int_{M^n}\left|Rm\right|^qdV<+\infty,
\end{equation*}
then it is $C^k$ asymptotic to a cone for all $k$.
\end{corollary}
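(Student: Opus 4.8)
The plan is to follow the blueprint of Theorem \ref{t5.4} but in a simplified form, since here the smallness hypothesis is imposed directly on $\left|Rm\right|^{n/2}$ rather than on $R^2$, so no auxiliary control of the Ricci curvature by the scalar curvature is needed. First I would reduce to $r=1$ by rescaling, using the scaling invariance of $\int_{B(x,r)}\left|Rm\right|^{n/2}dV$. Arguing by contradiction, I would assume there is a sequence of non-flat Ricci shrinkers $(M^n_k,g_k,f_k,p_k)$ and points $x_k$ with $\int_{B_{g_k}(x_k,1)}\left|Rm\right|_{g_k}^{n/2}\,dV_{g_k}\le 1/k$ but $\left|Rm\right|_{g_k}(x_k)\ge k$. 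Applying Lemma \ref{l5.5} (with exponent $n$, i.e. the case whose hypothesis is $\int_{B(x,r)}R^{n/2}\le\varepsilon$, which is implied by the stronger $\int\left|Rm\right|^{n/2}\le\varepsilon$ since $R\le C(n)\left|Rm\right|$) gives the uniform lower volume ratio bound $\left|B(x_k,1)\right|\ge C(n,\boldsymbol{\mu})$, so the point-picking and parabolic-rescaling machinery of Theorem \ref{t4.1} applies verbatim: one produces $y_k\in B_{g_k}(x_k,1)$, scales so that $\left|Rm\right|(y_k)=1$ on the rescaled flow, gets a uniform curvature bound on a ball of radius $\to\infty$ around $y_k$, a uniform injectivity radius lower bound via \cite{CGT}, and hence a $C^\infty$ Cheeger-Gromov limit $(M^n_\infty,g_\infty(0),y_\infty)$ of complete Ricci-flat manifolds.

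The key simplification is the last step: because the hypothesis bounds $\left|Rm\right|^{n/2}$ and not merely $R^2$, the scale-invariant quantity $\int_{B_{g_k}(x_k,1)}\left|Rm\right|_{g_k}^{n/2}$ tends to zero along the sequence, and since the balls over which we take the blowup limit have radius tending to infinity while the curvature integral is converging to $0$, the limit satisfies $\int_{M^n_\infty}\left|Rm\right|_{g_\infty}^{n/2}\,dV_{g_\infty}=0$, hence $\left(M^n_\infty,g_\infty\right)$ is flat. (Here I would invoke lower semicontinuity of the $L^{n/2}$ norm of curvature under $C^\infty_{loc}$ convergence together with the vanishing of the integral over exhausting balls.) This contradicts $\left|Rm\right|_{g_\infty}(y_\infty)=1$, which persists in the limit by the uniform $C^0$ curvature bound and smooth convergence. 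Thus no such sequence exists and the $\varepsilon$-regularity statement follows. I do not expect a serious obstacle here; the one point requiring a little care is confirming that the parabolic rescaling of Theorem \ref{t4.1} indeed needs only the $L^{n/2}$-curvature smallness (through Lemma \ref{l5.5}) and the resulting uniform $\nu$-bound, neither of which uses the potential function $f(x)$ being large—so the hypothesis ``$f(x)\ge\varepsilon^{-1}r^{-2}$'' can be dropped in this setting, exactly as the statement of Theorem \ref{t5.10} asserts.

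For the corollary, I would argue exactly as in the proof of Theorem \ref{t5.7}. If $\int_{M^n}\left|Rm\right|^q\,dV<+\infty$ for some $q>\tfrac{n}{2}$, then by Hölder's inequality and the non-expanding volume-ratio estimate \cite[Theorem 1.2]{LW2},
\begin{equation*}
\int_{B(x,1)}\left|Rm\right|^{\frac{n}{2}}\,dV\le\left(\int_{B(x,1)}\left|Rm\right|^q\,dV\right)^{\frac{n}{2q}}\left|B(x,1)\right|^{1-\frac{n}{2q}}\le C\left(\int_{B(x,1)}\left|Rm\right|^q\,dV\right)^{\frac{n}{2q}},
\end{equation*}
and the right-hand side tends to $0$ as $x\to\infty$ by absolute continuity of the integral. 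Hence for $x$ outside a large ball the hypothesis of Theorem \ref{t5.10} holds with a uniform constant, giving $\left|Rm\right|(x)\le C$ globally and in fact $\left|Rm\right|(x)\to 0$ at infinity. Since $\left|Ric\right|\le\sqrt{n}\,\left|Rm\right|\to 0$ at infinity, Theorem \ref{t5.1}(1) of Munteanu-Wang applies and gives that $(M^n,g,f,p)$ is $C^k$ asymptotic to a cone for all $k$. The main thing to double-check in the corollary is that the exponent bookkeeping $1-\tfrac{n}{2q}>0$ (which is exactly $q>\tfrac n2$) makes the volume factor harmless, and that the decay $\left|Rm\right|(x)\to0$ is uniform, which follows since the curvature integral over unit balls centered far out is uniformly small.
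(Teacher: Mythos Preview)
Your argument for Theorem \ref{t5.10} is essentially the paper's: the paper simply remarks that the proof is the same as Theorem \ref{t5.4} but easier, since the vanishing of the scale-invariant quantity $\int\left|Rm\right|^{n/2}$ forces the blowup limit to be flat directly, and the largeness of $f(x)$ is no longer needed. Your write-up captures exactly this.

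For the corollary your route differs slightly from the paper's. The paper says to run ``the same arguments as in Theorem \ref{t5.7}'' together with a Harnack inequality: one first uses H\"older and Theorem \ref{t5.10} to get $\left|Rm\right|\le K$ outside a compact set, then invokes the Harnack-type estimate (the Proposition following Corollary \ref{c5.3}) to compare curvature values on unit balls, integrates, and uses the lower volume bound from Lemma \ref{l5.5} to force $R\to 0$; finally Lemma \ref{l5.2} gives $\left|Ric\right|\to 0$ and Theorem \ref{t5.1} finishes. You instead bypass the Harnack step entirely and go straight from the $\varepsilon$-regularity to $\left|Rm\right|\to 0$, then apply Theorem \ref{t5.1}(1). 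This is more direct and avoids both the Harnack inequality and Lemma \ref{l5.2}. The one point to tighten is your justification of $\left|Rm\right|(x)\to 0$: Theorem \ref{t5.10} applied on \emph{unit} balls only gives the fixed bound $\left|Rm\right|(x)\le\varepsilon^{-1}$, not decay. To get decay you should either apply Theorem \ref{t5.10} at scale $r$ for every $r>0$ (since $\int_{B(x,r)}\left|Rm\right|^{n/2}\to 0$ as $x\to\infty$ for each fixed $r$, eventually $\left|Rm\right|(x)\le\varepsilon^{-1}r^{-2}$, and letting $r\to\infty$ gives the decay), or note that the same blowup proof yields the quantitative form ``for any $\delta>0$ there is $\varepsilon(\delta)>0$ with $\int\le\varepsilon\Rightarrow\left|Rm\right|\le\delta$'', analogous to the corollary following Theorem \ref{t4.1}. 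With that adjustment your argument is complete.
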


\noindent {\it\textbf{Acknowledgements}}: The author Y. Wang is supported partially by NSFC (Grant No.11971400) and National key Research and Development projects of China (Grant No. 2020YFA0712500). The authors are grateful to Professor Bing Wang for many constructive suggestions.

\bibliographystyle{amsalpha}

Jie Wang,  Institute of Geometry and Physics, University of Science and Technology of China, No. 96 Jinzhai Road, Hefei, Anhui Province, 230026, China.

Email: wangjie9math@163.com

Youde Wang, 1. School of Mathematics and Information Sciences, Guangzhou University; 2. Hua Loo-Keng Key Laboratory of Mathematics, Institute of Mathematics, Academy of Mathematics and Systems Science, Chinese Academy of Sciences, Beijing 100190, China; 3. School of Mathematical Sciences, University of Chinese Academy of Sciences, Beijing 100049, China.

Email: wyd@math.ac.cn

\end{document}